\numberwithin{figure}{section}
\numberwithin{table}{section}
\numberwithin{equation}{section}
\newenvironment{abstr}[1]{ \vspace{.05in}\footnotesize
   \parindent .2in
   {\upshape\bfseries #1. }\ignorespaces}{\par\vspace{.1in}}
\newenvironment{Abstract}{\begin{abstr}{Abstract}}{\end{abstr}}
\newenvironment{keywords}{\begin{abstr}{Key words}}{\end{abstr}}
\newenvironment{AMS}{\begin{abstr}{AMS subject classifications}}{\end{abstr}}
\newtheorem{theorem}{Theorem}[section]
\newtheorem{lemma}[theorem]{Lemma}
\newtheorem{corollary}[theorem]{Corollary}
\newtheorem{proposition}[theorem]{Proposition}
\theoremstyle{definition}
\newtheorem{remark}[theorem]{Remark}
\DeclareMathOperator{\supp}{supp}
\newcommand\CF{\mathcal{F}}
\newcommand\CL{\mathcal{L}}
\newcommand\CP{\mathcal{P}}
\newcommand\CQ{\mathcal{Q}}
\newcommand\CT{\mathcal{T}}
\newcommand\CV{\mathcal{V}}
\newcommand{\CVi}{\mathcal V^{\mathrm{int}}}
\newcommand\CR{\mathcal{R}}
\newcommand{\UN}{\textup{N}}
\newcommand{\UP}{\textup{P}}
\newcommand{\US}{\textup{S}}
\newcommand{\UT}{\textup{T}}
\newcommand{\UY}{\textup{Y}}
\newcommand{\ba}{\mathbf{a}}
\newcommand{\bx}{\mathbf{x}}
\newcommand{\oa}{{\omega^\ba}}
\newcommand{\eq}{:=}
\newcommand{\grad}{\boldsymbol \nabla}
\newcommand{\Om}{{\Omega_-}}
\newcommand{\Op}{{\Omega_+}}
\newcommand{\Sm}{\sigma_-}
\newcommand{\Sp}{\sigma_+}
\newcommand{\ho}{\widehat \omega}
\newcommand{\hK}{\widehat K}
\newcommand{\hb}{\widehat b}
\newcommand{\hC}{\widehat C}
\newcommand{\hv}{\widehat v}
\newcommand{\hw}{\widehat w}
\newcommand{\contrast}{\mathscr{C}}
\begin{document}
   
   \title{A generalized finite element method for problems with sign-changing coefficients}
   \author{Th\'eophile Chaumont-Frelet\footnotemark[1] \footnotemark[2] \and Barbara Verf\"urth\footnotemark[3]}
   \date{}
   \maketitle
   
   \renewcommand{\thefootnote}{\fnsymbol{footnote}}
   \footnotetext[1]{Inria Sophia Antipolis Méditerran\'ee, 2004 Route des Lucioles, 06902 Valbonne, France}
   \footnotetext[2]{Laboratoire J.A. Dieudonn\'e UMR CNRS 7351, Parc Valrose, 06108 Nice, France}
   \footnotetext[3]{Institut für Angewandte und Numerische Mathematik, Karlsruher Institut für Technologie (KIT), Englerstr.~2, D-76131 Karlsruhe}
   \renewcommand{\thefootnote}{\arabic{footnote}}
   
   \begin{Abstract}
   Problems with sign-changing coefficients occur, for instance, in the study of transmission problems
   with metamaterials. In this work, we present and analyze a
   generalized finite element method in the spirit of the Localized Orthogonal Decomposition,
   that is especially efficient when the negative and positive materials exhibit multiscale
   features.
   We derive optimal linear convergence in the energy norm independently of the potentially low
   regularity of the exact solution. Numerical experiments illustrate the theoretical convergence rates
   and show the applicability of the method for a large class of sign-changing diffusion problems.    
   \end{Abstract}
   
   \begin{keywords}
      generalized finite element method, multiscale method, sign-changing coefficients, T-coercivity
   \end{keywords}
   
   \begin{AMS}
      65N30, 65N12, 65N15, 78A48, 35J20
   \end{AMS}

 \section{Introduction}
\label{sec:introduction}

Metamaterials with, for instance, negative refractive index have attracted a lot of interest
over the last years due to many applications \cite{Pen00perfectlens,SPW04metamat}. The related mathematical problems are
characterized by so-called sign-changing coefficients. At the simplest example of a diffusion
problem in a domain $\Omega \subset \mathbb R^d$, $d \in \{2,3\}$, it means that the diffusion coefficient $\sigma$ takes strictly negative values,
i.e., $\sigma \leq -|\Sm| < 0$ in some part $\Om$ of the domain, while it takes strictly positive
values, i.e., $\sigma \geq |\Sp| > 0$ in the complement $\Op$. The interface
	$\Gamma$ between $\Omega_+$ and $\Omega_-$ is then called the ``sign-changing'' interface.
Such a behavior of the coefficient in the PDE does not only appear for metamaterials with
negative effective properties \cite{SPW04metamat}, but also for electric permittivities, which can have a negative
real part for certain metals.

The change of sign of $\sigma$ has tremendous effects on the analysis and numerics.
The standard assumption of coercive bilinear forms is no longer valid, so that existence and
uniqueness of solutions have to be studied anew. Employing the approach of $\UT$-coercivity
\cite{BCZ10signchangingTcoercive}, a large progress has been made in this area in the last years
considering the diffusion problem \cite{BCC12signchangingTcoercive,BCZ10signchangingTcoercive}
as well as time-harmonic wave propagation \cite{BCC14signchangingmaxwell,BCC14signchaningmaxwell2d}
and eigenvalue problems \cite{CCC17signchangingeigenvalue}.
Essentially, the problem is well-posed if
the contrast $|\Sp|/|\Sm|$ lies outside a so-called ``critical interval'' $I = [1/r,r]$,
where $r \geq 1$ depends on the geometry of $\Gamma$.

When discretizing these problems with the standard finite element method, the questions of
existence and uniqueness of the discrete solution as well as convergence rates for the error
immediately arise. Simply speaking, they have been answered positively in two different
scenarios, namely a) if the mesh satisfies certain symmetry properties around the
interface $\Gamma$, which is denoted as $\UT$-conformity
\cite{BCC18signchangingmesh}, or b) if the contrast $|\Sp|/|\Sm|$ is outside
an enlarged critical interval $\widetilde I = [1/\widetilde r,\widetilde r]$,
where $\widetilde r > r$ \cite[Section 5.1]{CC13Tcoercivemesh}.

Besides the a priori stability and error analysis, a posteriori error indicators and their reliability and efficiency have been studied for the standard finite element method as well \cite{CV18signchangingapost,NV11signchaningapost}.
Furthermore, an optimization-based scheme which does not require symmetric meshes is introduced in \cite{AHL17optsignchanging}.
Apart from continuous Galerkin methods, we also mention that schemes in the discontinuous Galerkin framework have been presented and analyzed in \cite{CC13signchangingdG} and \cite{LR19signchangingHDG}.

The main contribution of the present work is the introduction and numerical analysis of a
generalized finite element method in the spirit and framework of the Localized Orthogonal
Decomposition (LOD) \cite{HP13oversampl,MP14LOD,Pet15LODreview}. 
The LOD is especially targeted at so-called multiscale problems, where the coefficient is subject to rapid spatial variations. Standard discretization schemes need to resolve all these features with their computational grid leading to an enormous and often infeasible computational effort.
The basic idea of the LOD is to construct a low-dimensional solution space with very good $H^1$-approximation properties with respect to the exact solution. As standard finite element functions on a coarse mesh alone
do not yield a faithful approximation space, problem-dependent multiscale functions are added.
The latter are defined as solutions of local fine-scale problems.
Since its introduction in \cite{HP13oversampl,MP14LOD}, the LOD has been successfully
applied in various situations, where we mention in particular the reduction of the pollution effect
for high-frequency Helmholtz problems
\cite{GP15scatteringPG,P17LODhelmholtz,PV19helmholtzhighcontrast}.
The efficient implementation of the method is outlined in \cite{EHMP16LODimpl}.
Note that the LOD is closely connected to domain
decomposition methods \cite{KPY16LODiterative,KY16LODiterative,PVV19dd}.
Further, it can also be interpreted in the context of homogenization \cite{GP17lodhom}. If $\sigma$ is (locally) periodic one can thereby recover traditional (analytical) homogenization results, see \cite{BDT19,BR16} for such results in the case of periodic sign-changing coefficients.

We analyze the stability and convergence of the proposed method
when $d=2$ or $3$, under the assumption that
the interface is resolved by the mesh and that the contrast is ``sufficiently large''.
While this restriction means that the interface $\Gamma$ is essentially ``macroscale'',
$\sigma$ is allowed to exhibit a rough and multiscale behavior in $\Om$ and $\Op$.
Under these assumptions, the present method allows for optimal convergence orders on
uniform meshes, even in the presence of corner singularities, which is already known for
positive discontinuous diffusion coefficients. In contrast with standard FEM
\cite{CC13Tcoercivemesh}, considerable complications arise in the analysis of the LOD method
in the presence of sign-changing coefficients.
Indeed, while the LOD method has been analyzed for a rather large class
of inf-sup stable problems in \cite[Chap.~2]{Maier20}, these general arguments
cannot be directly applied here, because of the inherently non-local procedure involved
by the $\UT$-coercivity approach.

While our numerical analysis assumes an interface-resolving mesh as well as an hypothesis
on the contrast, we present numerical experiments with general meshes, that do not
necessarily resolve the sign-changing interface(s), as well as contrasts close
to the critical interval. Although they are limited to two-dimensional settings, these
results are very promising, and indicate the efficiency
of the method in highly heterogeneous media. Finally, we mention that we consider
the diffusion problem here, but the arguments and techniques might also be generalized to
other settings such as the Helmholtz equation.

The paper is organized as follows. In Section \ref{sec:setting}, we introduce our model problem.
Our generalized finite element method is motivated in an ideal form
in Section \ref{sec:lod}. There, we also discuss three main challenges of the ideal method, namely the well-posedness of the construction in the case of sign-changing coefficients, as well as the localization and discretization of the multiscale basis. The dedicated arguments required to take into account
$\UT$-coercivity in the context of LOD are discussed in Section \ref{sec:intpol}. The fully practical LOD is finally presented and analyzed in Section \ref{sec:lodpractical}.
In Section
\ref{sec:numexperiment}, we present several numerical experiments illustrating our theory
and showing the applicability of the method even for meshes that do not resolve the interface,
and contrasts close to the critical interval.
Some technical finite element estimates related to quasi-interpolation are collected in Appendix
\ref{sec:appendix}.

\section{Settings}

\label{sec:setting}  
In this section we introduce the required functional spaces, the model problem
under consideration and discuss the notion of $\UT$-coercivity.

	\subsection{Domain and coefficient}
	
	We consider a polytopal domain $\Omega \subset \mathbb R^d$, with $d \in \{2,3\}$.
	We assume that $\overline{\Omega} = \overline{\Op} \cup \overline{\Om}$,
	where $\Omega_\pm \subset \Omega$ are two non-overlapping subsets of $\Omega$. We denote by
	\begin{equation*}
	\Gamma \eq \partial \Op \cap \partial \Om
	\end{equation*}
	the boundary shared by the two subsets. For the sake of simplicity, we assume that
	$\Gamma$ is polytopal. However, we do not require specific assumption about the
	topology of $\Omega_\pm$. In particular, $\Op$ and/or $\Om$ can be multi-connected.
	
	We consider a diffusion coefficient $\sigma \in L^\infty(\Omega)$ such that
	$\sigma|_\Om \leq -\Sm$ and $\sigma|_\Op \geq \Sp$, where $0 < \Sp \leq \Sm < +\infty$
	are fixed real numbers. Note that by symmetry, one could alternatively consider
	$\sigma|_{\Omega_-} \geq \sigma|_{\Omega_+}$, but we only analyze the other
	case for the sake of simplicity. In the remaining of this work, we will call the
	positive real number $\contrast \eq \sigma_-/\sigma_+$ the  ``contrast''.
	
	\subsection{Functional spaces}
	
	Throughout this work, if $D \subset \Omega$, $L^2(D)$ is the usual Lebesgue
	space of square integrable functions. We denote by $(\cdot,\cdot)_D$ and
	$\|\cdot\|_{0,D}$ the usual inner product and norm of $L^2(D)$. We employ
	the same notations for the inner product and norm of $\left (L^2(D)\right )^d$.
	Classically,
	$H^1(D) \eq \left \{ v \in L^2(D) \; | \; \grad v \in \left (L^2(D)\right )^d\right \}$
	denotes the usual Sobolev space, and if $\gamma \subset \partial D$, we employ the notation
	\begin{equation*}
	H^1_{\gamma}(D) \eq \left \{ v \in H^1(D) \; | \; v|_{\gamma} = 0 \right \},
	\end{equation*}
	that we equip with its usual semi-norm $|\cdot|_{1,D}$.
	Note that, thanks to Poincar\'e inequality, $|\cdot|_{1,D}$ is actually
	a norm on $H^1_\gamma(D)$ as long as $\gamma$ has a strictly positive surface measure.
	Unless stated otherwise, we will always employ the notation $H^1_\gamma(D)$ when
	the surface measure of $\gamma$ is strictly positive, and equip the space with
	$|\cdot|_{1,D}$ as norm. In particular, this norm is considered when defining
	the norm of linear operators. We will also use the usual notation
	$H^1_0(\Omega) \eq H^1_{\partial \Omega}(\Omega)$, and consider the equivalent
	norm
	\begin{equation*}
	|v|_{1,\sigma,\Omega}^2 \eq \int_\Omega |\sigma| |\grad v|^2
	\end{equation*}
	for $v \in H^1_0(\Omega)$.
	
	\subsection{Model problem}

Given $f \in L^2(\Omega)$, we
seek $u \in H^1_0(\Omega)$ such that
\begin{equation}
\label{eq:modelpb}
a(u,v) = (f,v)_{\Omega},
\end{equation}
where
\begin{equation*}
a(u,v) \eq (\sigma \grad u,\grad v)_{\Omega}.
\end{equation*}

	When $\sigma$ is positive, the bilinear form $a(\cdot,\cdot)$ actually
	corresponds to the inner product associated with the norm $|\cdot|_{1,\sigma,\Omega}$.
	In particular, $a(\cdot,\cdot)$ is coercive, and the well-posedness of \eqref{eq:modelpb}
	follows from Lax-Milgram Lemma. Here, the sign-change of $\sigma$ prevents the coercivity
	of $a(\cdot,\cdot)$. Following an approach known as $\UT$-coercivity, we will show that instead,
	assuming that the contrast is sufficiently large, $a(\cdot, \cdot)$ satisfies an inf-sup condition, ensuring
	the well-posedness of \eqref{eq:modelpb}.

	\begin{remark}
		Throughout the whole work, we assume $f\in L^2(\Omega)$. While the model problem and the generalized finite element method can be defined for $f\in H^{-1}(\Omega)$ as well, $f\in L^2(\Omega)$ is required to obtain convergence of the method, see Proposition \ref{prop:lodideal}.
	\end{remark}

	\subsection{Symmetrization and $\UT$-coercivity}
	\label{subsec:Tcoercive}
	
	In the broadest sense, we say that the bilinear form $a(\cdot,\cdot)$ is
	$\UY$-coercive if there exists an operator $\UY \in \CL(H^1_0(\Omega))$ such that
	\begin{equation*}
	a(v,\UY v) \geq a_\star |v|_{1,\sigma,\Omega}^2 \quad \forall v \in H^1_0(\Omega),
	\end{equation*}
	for some fixed $a_\star > 0$. This definition is equivalent to the usual ``inf-sup'' condition.
	It guarantees the well-posedness of Problem \eqref{eq:modelpb}, and the stability constant
	then depends on $a_\star$ and $\|\UY\|_{\CL(H^1_0(\Omega))}$.
	
	In the context of problems with a sign-changing coefficient, a particular form of
	$\UT$-coercivity based on symmetrization has been developed
	\cite{BCC12signchangingTcoercive,BCC18signchangingmesh}. The key idea is to
	design an operator $\UY$ that ``flips'' the sign of its argument in $\Omega_-$. 
	This construction relies on a symmetrization operator $\US$, that maps
	$H^1_{\partial \Omega}(\Om)$ into $H^1_{\partial \Omega}(\Op)$.

	In the following, we assume that $\Om$ and $\Op$ are such that a ``symmetrization''
	operator $\US$ is available. By symmetrization operator, we mean that
	that $\US \in \CL(H^1_{\partial \Omega}(\Om);H^1_{\partial \Omega}(\Op))
	\cap \CL(L^2(\Om);L^2(\Op))$, is a linear mapping that preserves the trace
	on $\Gamma$, i.e., $\US v|_\Gamma = v|_\Gamma$ for all $v \in H^1_{\partial \Omega}(\Om)$.
	We refer the reader to \cite{BCC12signchangingTcoercive} for examples of such symmetrization
	operators. Further, we point out that rather general polypotal interfaces can be treated
	\cite{BCC18signchangingmesh} if one uses the concept of so-called \emph{weak $\UT$-coercivity}.
	We discuss the extension of our work to this setting in Section \ref{sec_weak_Tcoer}.
	
	Using $\US$, we may now define an operator
	$\UT \in \CL(H^1_0(\Omega))$ for which $a(\cdot,\cdot)$ is $\UT$-coercive.
	Specifically, if $u \in H^1_0(\Omega)$, we set
\begin{equation*}
\UT u = \left \{
\begin{array}{ll}
-u & \text{ in } \Om
\\
u-2\US u & \text{ on } \Op.
\end{array}
\right .
\end{equation*}
One easily sees that we have
	\begin{equation}\label{eq:TL2stab}
	|v-\UT v|_{1,\Op} \leq C_\pm(\UT) |v|_{1,\Om}
	\text{ and }
	\|v-\UT v\|_{0,\Op} \leq C_\pm^0(\UT) \|v\|_{0,\Om}
	\end{equation}
	for all $v \in H^1_0(\Omega)$, with
	\begin{equation*}
	C_\pm(T) \eq 2\|\US\|_{\CL(H^1_{\partial \Omega}(\Om);H^1_{\partial \Omega}(\Op))},
	\qquad
	C_\pm^0(T) \eq 2\|\US\|_{\CL(L^2(\Om);L^2(\Op))}.
	\end{equation*}

	We can readily employ $\UT$ to establish an inf-sup condition for $a(\cdot,\cdot)$
	on $H^1_0(\Omega)$, thus showing that Problem \eqref{eq:modelpb} is well-posed
	\cite{BCC12signchangingTcoercive}. However, later in the analysis of the LOD method,
	we will need to show a similar inf-sup condition, but on the kernel of some
	quasi-interpolation operator, instead
	of the whole $H^1_0(\Omega)$ space. For this reason, we first give a general result
	linking the existence of an operator $\UY \in \CL(V)$ for some $V \subset H^1_0(\Omega)$
	and the inf-sup stability of $a(\cdot,\cdot)$ on $V$.

\begin{theorem}
	\label{prop:T_coercivity}
	Let $V \subset H^1_0(\Omega)$ be a closed subspace. Assume that
	there exists an operator $\UY \in \CL(V)$ such that
	\begin{subequations}
		\label{eq_T}
		\begin{equation}
		\label{eq_T_minus}
		\left .\left (\UY v\right ) \right |_{\Om} = -v|_{\Om}
		\end{equation}
		and
		\begin{equation}
		\label{eq_T_stab}
		|v-\UY v|_{1,\Op} \leq C_{\pm}(\UY) |v|_{1,\Om},
		\end{equation}
	\end{subequations}
	for all $v \in V$. Then, we have
	\begin{equation}
	\label{eq_T_coercivity}
	a(v,\UY v)
	\geq
	\left (
	1
	-
	\frac{C_\pm(\UY)}{2}
	\left (\frac{\sup_\Op \sigma}{\inf_\Op \sigma}\right ) \sqrt{\frac{\Sp}{\Sm}}
	\right )
	|v|_{1,\sigma,\Omega}^2
	\end{equation}
	for all $v \in V$.
\end{theorem}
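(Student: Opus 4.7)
The plan is to compute $a(v,\UY v)$ directly by splitting the integral over $\Omega$ into its $\Om$ and $\Op$ parts, applying hypothesis \eqref{eq_T_minus} on $\Om$ and hypothesis \eqref{eq_T_stab} on $\Op$. On $\Om$, since $\UY v = -v$, the integrand becomes $\sigma \nabla v \cdot \nabla \UY v = -\sigma|\nabla v|^2 = |\sigma||\nabla v|^2$ (because $\sigma \leq -\Sm < 0$ there), so this contribution equals exactly $|v|_{1,\sigma,\Om}^2$. On $\Op$, I would write $\UY v = v - (v - \UY v)$, so that
\begin{equation*}
\int_\Op \sigma \nabla v \cdot \nabla \UY v
\;=\;
|v|_{1,\sigma,\Op}^2 \;-\; R, \qquad R \eq \int_\Op \sigma \nabla v \cdot \nabla (v - \UY v).
\end{equation*}
Summing both contributions gives the master identity $a(v,\UY v) = |v|_{1,\sigma,\Omega}^2 - R$, and it remains to bound $|R|$ by a constant strictly less than one times $|v|_{1,\sigma,\Omega}^2$.

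For the cross term, I would apply the weighted Cauchy--Schwarz inequality on $\Op$ with weight $\sqrt{\sigma}$, yielding $|R| \leq |v|_{1,\sigma,\Op}\, |v - \UY v|_{1,\sigma,\Op}$. To remove the $\sigma$-weight in the second factor I would use the pointwise upper bound $\sigma \leq \sup_\Op \sigma$, obtaining $|v-\UY v|_{1,\sigma,\Op} \leq \sqrt{\sup_\Op \sigma}\, |v - \UY v|_{1,\Op}$, then invoke \eqref{eq_T_stab} to replace $|v - \UY v|_{1,\Op}$ by $C_\pm(\UY)|v|_{1,\Om}$, and finally use the equivalence $|v|_{1,\Om} \leq |v|_{1,\sigma,\Om}/\sqrt{\Sm}$ (a direct consequence of $|\sigma| \geq \Sm$ on $\Om$). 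A standard Young inequality $|v|_{1,\sigma,\Op}|v|_{1,\sigma,\Om} \leq \tfrac12 |v|_{1,\sigma,\Omega}^2$ then turns the product into a multiple of $|v|_{1,\sigma,\Omega}^2$, giving \eqref{eq_T_coercivity}.

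The delicate point is the precise bookkeeping in the Cauchy--Schwarz and Young steps. To collapse the prefactor into the announced $(\sup_\Op \sigma/\inf_\Op \sigma)\sqrt{\Sp/\Sm}$, rather than the cruder $\sqrt{\sup_\Op \sigma / \Sm}$ that a naive application produces, one has to split $\sigma$ asymmetrically—using the upper bound $\sigma \leq \sup_\Op \sigma$ in one factor and the lower bound $\sigma \geq \inf_\Op \sigma$ (together with $\inf_\Op \sigma \geq \Sp$) in the other—so that the oscillation factor on $\Op$ is isolated from the inverse-contrast factor $\sqrt{\Sp/\Sm}$. Everything else is algebra, and the argument is structural: it does not use anything about $\UY$ beyond \eqref{eq_T_minus} and \eqref{eq_T_stab}, which is why it will later apply to operators defined only on subspaces $V \subset H^1_0(\Omega)$.
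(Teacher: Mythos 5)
Your proposal is correct and follows essentially the same route as the paper's proof: the identical splitting $a(v,\UY v)=|v|_{1,\sigma,\Omega}^2-(\sigma\grad v,\grad(v-\UY v))_\Op$ obtained from \eqref{eq_T_minus}, then Cauchy--Schwarz on $\Op$, the stability bound \eqref{eq_T_stab}, the lower bounds $|\sigma|\geq\Sp$ on $\Op$ and $|\sigma|\geq\Sm$ on $\Om$, and Young's inequality. The only cosmetic difference is that you apply a $\sigma$-weighted Cauchy--Schwarz first and remove the weight afterwards, whereas the paper pulls out $\sup_\Op\sigma$ immediately; the ``asymmetric splitting'' you describe at the end is exactly the bookkeeping the paper performs to arrive at the factor $(\sup_\Op\sigma/\inf_\Op\sigma)\sqrt{\Sp/\Sm}$.
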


\begin{proof}
	Pick an arbitrary element $v \in V$. Taking advantage of \eqref{eq_T_minus}, we may write
	\begin{align}
	\nonumber
	a(v,\UY v)
	&=
	(\sigma \grad v,\grad(\UY v))_\Op
	+
	(\sigma \grad v,\grad(\UY v))_\Om
	\\
	\nonumber
	&=
		(\sigma \grad v,\grad(\UY v))_\Op
		-
		(\sigma \grad v,\grad v)_\Om
	\\
	\nonumber
	&=
	(|\sigma| \grad v,\grad(\UY v))_\Op
	+
	(|\sigma| \grad v,\grad v)_\Om
	\\
	\label{tmp_T_coercivity_1}
	&=
	|v|_{1,\sigma,\Omega}^2 - (|\sigma| \grad v,\grad (v-\UY v))_\Op
	\end{align}
	Then, we derive that
	\begin{equation}
	\label{tmp_T_coercivity_2}
	\begin{aligned}
	|(\sigma \grad v,\grad(v-\UY v))_\Op|
	&\leq
	(\sup_\Op \sigma) |v|_{1,\Op}|v-\UY v|_{1,\Op}
	\\
	&\leq
	\left (\frac{\sup_\Op \sigma}{\inf_\Op \sigma}\right )
	\Sp |v|_{1,\Op}|v-\UY v|_{1,\Op}
	\\
	&\leq
	C_\pm(\UY)
	\left (\frac{\sup_\Op \sigma}{\inf_\Op \sigma}\right )
	\Sp |v|_{1,\Op}|v|_{1,\Om}
	\\
	&\leq
	\frac{C_\pm(\UY)}{2}
	\left (\frac{\sup_\Op \sigma}{\inf_\Op \sigma}\right )
	\sqrt{\frac{\Sp}{\Sm}} |v|_{1,\sigma,\Omega}^2,
	\end{aligned}
	\end{equation}
	where we have employed Young's inequality
	\begin{equation*}
	\Sp |v|_{1,\Op}|v|_{1,\Om}
	=
	\sqrt{\frac{\Sp}{\Sm}} \sqrt{\Sp} |v|_{1,\Op} \sqrt{\Sm} |v|_{1,\Om}
	\leq
	\frac{1}{2} \sqrt{\frac{\Sp}{\Sm}} |v|_{1,\sigma,\Omega}^2.
	\end{equation*}
	Estimate \eqref{eq_T_coercivity} then follows from
	\eqref{tmp_T_coercivity_1} and \eqref{tmp_T_coercivity_2}.
\end{proof}

	Recalling \eqref{eq:TL2stab}, an immediate consequence of Theorem
	\ref{prop:T_coercivity} is that if the contrast is sufficiently large, $a(\cdot,\cdot)$
	is $\UT$-coercive, and Problem \eqref{eq:modelpb} is well-posed.

	\begin{corollary}
		Under the assumption that
		\begin{equation}
		\label{eq_assumption_contrast}
		\sqrt{\contrast}
		>
		\left (\frac{\sup_\Op \sigma}{\inf_\Op \sigma}\right )
		\|\US\|_{\CL(H^1_{\partial \Omega}(\Om);H^1_{\partial \Omega}(\Op))}
		\end{equation}
		we have $a(v,\UT v) \geq \alpha |v|_{1,\sigma,\Omega}^2$  for all $v \in H^1_0(\Omega)$,
		with
		\begin{equation*}
		\alpha
		\eq
		1
		-
		\frac{C_\pm(\UT)}{2}
		\left (\frac{\sup_\Op \sigma}{\inf_\Op \sigma}\right )
		\sqrt{\frac{\Sp}{\Sm}} > 0.
		\end{equation*}
		In particular, Problem \eqref{eq:modelpb} is well-posed.
	\end{corollary}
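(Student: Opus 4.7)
The plan is to deduce the corollary as a direct specialization of Theorem \ref{prop:T_coercivity} with $V = H^1_0(\Omega)$ and $\UY = \UT$. First I would verify that $\UT$, as defined just above the statement of \eqref{eq:TL2stab}, satisfies the two hypotheses \eqref{eq_T_minus} and \eqref{eq_T_stab}. Property \eqref{eq_T_minus} is immediate from the definition $\UT u = -u$ on $\Om$, while \eqref{eq_T_stab} is precisely the first bound in \eqref{eq:TL2stab}, with the same constant $C_\pm(\UT) = 2\|\US\|_{\CL(H^1_{\partial\Omega}(\Om);H^1_{\partial\Omega}(\Op))}$.

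Having checked the hypotheses, Theorem \ref{prop:T_coercivity} yields $a(v,\UT v) \geq \alpha\,|v|_{1,\sigma,\Omega}^2$ for all $v \in H^1_0(\Omega)$, with $\alpha$ exactly as stated in the corollary. The remaining point is that the assumption \eqref{eq_assumption_contrast} is equivalent to $\alpha > 0$. Indeed, substituting $C_\pm(\UT) = 2\|\US\|$ and using $\sqrt{\Sp/\Sm} = 1/\sqrt{\contrast}$, the condition $\alpha > 0$ rearranges exactly to \eqref{eq_assumption_contrast}; this is really a one-line algebraic manipulation.

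For the final sentence on well-posedness, I would invoke the Banach--Ne\v{c}as--Babu\v{s}ka theorem on $H^1_0(\Omega)$ equipped with $|\cdot|_{1,\sigma,\Omega}$. Boundedness of $a(\cdot,\cdot)$ on this space follows from Cauchy--Schwarz together with the definition of $|\cdot|_{1,\sigma,\Omega}$, and $\UT \in \CL(H^1_0(\Omega))$ by \eqref{eq:TL2stab} combined with the trivial bound $|\UT v|_{1,\Om} = |v|_{1,\Om}$. Thus $v \mapsto \UT v / \|\UT\|$ furnishes a test function yielding the inf-sup condition with constant proportional to $\alpha$, and uniqueness follows from the same inf-sup by testing $a(u,\UT u)$ for $u$ in the kernel. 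Surjectivity of the associated operator is obtained by the usual dual inf-sup, which here is symmetric since $a$ is symmetric.

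The only mildly subtle point is purely bookkeeping: making sure the constants $C_\pm(\UT)$ and $\sqrt{\Sp/\Sm}$ combine to recover exactly \eqref{eq_assumption_contrast}. I do not anticipate any genuine obstacle, since the corollary is essentially a tautological restatement of Theorem \ref{prop:T_coercivity} in the particular case $V = H^1_0(\Omega)$ and $\UY = \UT$, with the contrast hypothesis chosen precisely so that the prefactor in \eqref{eq_T_coercivity} is strictly positive.
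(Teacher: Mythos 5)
Your proposal is correct and follows exactly the paper's route: the corollary is obtained by applying Theorem \ref{prop:T_coercivity} with $V = H^1_0(\Omega)$ and $\UY = \UT$, using \eqref{eq:TL2stab} to supply hypothesis \eqref{eq_T_stab} with the stated constant $C_\pm(\UT)$, and noting that \eqref{eq_assumption_contrast} is precisely the condition $\alpha > 0$ after substituting $C_\pm(\UT) = 2\|\US\|$ and $\sqrt{\Sp/\Sm} = 1/\sqrt{\contrast}$. The concluding well-posedness step via the inf-sup (Banach--Ne\v{c}as--Babu\v{s}ka) argument is likewise the standard mechanism the paper invokes by reference.
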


\begin{remark}
	In the above, we (arbitrarily) assumed that $\Sp \leq \Sm$. This is not a
	restrictive assumption, since in the case
	where $\Sm \leq \Sp$, we can always get back to this situation by applying
	a minus sign on both sides of \eqref{eq:modelpb}.
	In particular, when we write that the contrast is ``sufficiently large'',
	it actually means it is ``sufficiently far away from the critical interval''.
	Similarly, one could choose to define a symmetrization operator $\US$
		mapping from $\Omega_+$ to $\Omega_-$. We only consider one direction for the
		sake of simplicity. We refer the reader to \cite{BCC12signchangingTcoercive}
		for a detailed discussion.
\end{remark}

\section{An ideal generalized finite element method}
\label{sec:lod}
 
	In this section, we are concerned with the discretization of our model problem \eqref{eq:modelpb}. We first introduce some finite element notation in Section \ref{subsec:meshes}.
	The generalized finite element method is built upon a quasi-interpolation operator, which we briefly introduce in Section \ref{subsec:oswald}, and then present the idea of the generalized finite element method in Section \ref{subsec:ideal}.
	Finally, in Section \ref{sec_weak_Tcoer}, we discuss the extension of the method to problems
	satisfying a ``weak'' $\UT$-coercivity condition.

\subsection{Preliminaries and notation}
\label{subsec:meshes}
	We consider a shape-regular quasi-uniform triangulation $\CT_H$ of $\Omega$.
	$\CT_H$ is supposed to be a coarse mesh in the sense that it does not necessarily
	resolve the variations and oscillations of $\sigma$ nor the sign-changing interface
	$\Gamma$.
	
	\begin{remark}
		The definition of the method does not require the triangulation $\CT_H$ to fit
		the sign-changing interface $\Gamma$, and numerical experiments seem to indicate
		that the method is efficient without this requirements. In our theoretical analysis
		however, we require that $\Gamma$ (but not the oscillation of $\sigma$)
		is resolved by $\CT_H$ as a technical assumption.
	\end{remark}

Given an element $K \in \CT_H$ the notations
\begin{equation*}
H_K \eq \sup_{x,y \in K} |x - y|, \qquad
\rho_K \eq \sup \{ r > 0 \; | \; \exists x \in K; \; B(x,r) \subset K \},
\end{equation*}
respectively denote the diameter of $K$ and the radius of the largest balled contained in $K$.
The assumptions of shape-regularity and quasi-uniformity imply the existence of a constant
$\kappa > 1$ such that
\begin{equation*}
\frac{H}{\rho} \leq \kappa,
\end{equation*}
where $H \eq \max_{K \in \CT_H} H_K$ and $\rho \eq \min_{K \in \CT_H} \rho_K$.

$\CV_H$ is the set of vertices of $\CT_H$, and $\CVi_H$ is the set of ``interior''
vertices that do not lie on $\partial \Omega$.
If $\ba \in \CV_H$, we denote by $\psi^\ba$ the associated hat function and set
$\oa \eq \supp \psi^\ba$.
We further split $\CVi_H$ into three categories of vertices:
\begin{equation*}
\CV_H^- \eq \left \{
\ba \in \CVi_H \; | \;
\ba \in \Om
\right \}, \quad
\CV_H^+ \eq \left \{
\ba \in \CVi_H \; | \;
\ba \in \Op
\right \},
\quad
\CV_H^0 \eq \left \{
\ba \in \CVi_H \; | \;
\ba \in \Gamma
\right \}.
\end{equation*}
For $K \in \CT_H$, let
	$\CV(K) \subset \CV_H$ denote the set of vertices of $K$.
If $\ba \in \CV_H$, then
\begin{equation*}
\CT_H^\ba \eq \{ K \in \CT_H \quad | \quad \ba \in \CV(K) \},
\end{equation*}
is the associated local mesh, and $\sharp \ba \eq \operatorname{card} \CT_H^\ba$
is the number of elements touching $\ba$.

	The standard conforming finite element space of lowest order Lagrange elements is denoted by
	$V_H\subset H^1_0(\Omega)$, i.e., 
	\[V_H=\{v\in H^1_0(\Omega)\quad |\quad v|_K\in \CP_1(K)\,\,\, \forall K\in \CT_H\},\]
	where $\CP_1$ denotes the polynomials of total degree at most one.
	Note that a standard finite element discretization of \eqref{eq:modelpb} using $V_H$ will fail to produce faithful approximations if $\CT_H$ does not resolve the variations of $\sigma$.
	
	For any $D \subset \Omega$, $\UN(D)$ denotes the element patch around $D$ defined as
	\[\UN(D)=\bigcup\{K\in \CT_H, \overline{K}\cap \overline{D}\neq \emptyset\}.\]
	For later use, we also define inductively the $m$-layer patch $\UN^m(D)$ around $D$ via $\UN^m(D)=\UN(\UN^{m-1}(D
	))
	$ for $m\geq 2$. The idea of patches is illustrated in Figure \ref{fig:patch} in Section \ref{subsec:local} below.

	\subsubsection{Reference element and associated constants}
	\label{section_reference_element}
	
	In the remaining of this work, $\hK \subset \mathbb R^d$ is an
	arbitary but fixed ``reference'' simplex with diameter $\widehat H = 1$.
	We denote by $\hb \in \CP_{d+1}(\hK)$ the ``bubble'' function of
	$\hK$ that we define as the product of its $(d+1)$ barycentric
	coordinate functions. Since $0 \leq \hb \leq 1$ on $\hK$,
	$\|\cdot\|_{0,\hK}$ and $\|\hb^{1/2} \cdot\|_{0,\hK}$
	are equivalent norms on $\CP_1(\hK)$ and we denote by
	\begin{equation}
	\label{eq_bubble_norm}
	\hC_{\mathrm norm} \eq \sup_{q \in \CP_1(\hK) \setminus \{0\}}
	\frac{\|q\|_{0,\hK}}{\|\hb^{1/2} q\|_{0,\hK}},
	\end{equation}
	the upper constant (note that since $\hb \leq 1$, the constant is $1$ in the other direction).
	The constants
	\begin{equation}
	\label{eq_inf_inv}
	\hC_{\mathrm inf}
	\eq
	|\hK|^{1/2}
	\sup_{q \in \CP_1(\hK) \setminus \{0\}}
	\frac{\|q\|_{0,\infty,\hK}}{\|q\|_{0,\hK}}
	\qquad
	\hC_{\mathrm inv} \eq \sup_{q \in \CP_1(\hK) \setminus \{0\}}
	\frac{|q|_{1,\hK}}{\|q\|_{0,\hK}}
	\end{equation}
	will also be useful in the sequel.

	\subsubsection{Reference patches and associated constants}
	\label{section_reference_patch}
	
	We assume that there exists a finite set of ``reference patches''
	$\widehat \CR$ such that for all $\ba \in \CV_H$,
	there exist $\widehat \CT \in \widehat \CR$ and a bilipschitz invertible mapping
	$\CF: \ho \to \oa$, $\ho$ being the domain associated with $\widehat \CT$,
	such that for each $\hK \in \widehat \CT$ the restriction of $\CF$ to $\hK$ is affine,
	and $\CF(\hK) = K$ for some $K \in \CT_H^{\ba}$. Since the mesh $\CT_H$ is quasi-uniform,
	we may further assume that there exists two constants $c_\star,c^\star$ such that
	$c_\star H \leq |D \CF(\widehat \bx)| \leq c^\star H$ for all $\widehat \bx \in \ho$.
	For the sake of simplicity, we also assume without loss of generality that
	$\CF^{-1}(\ba) = \boldsymbol 0$.

	We assume that all elements $\hK$ in the reference patches satisfy
	$H_{\hK} \leq 1$ and $\rho_{\hK} \geq \widehat \rho$.
	
	We employ the notation
	\begin{equation}
	\label{eq_poincare_tmp}
	\hC_{\mathrm P} \eq \max_{\CT \in \widehat \CR} \sup_{\hv \in H^1(\ho)}
	\frac{\|\hv\|_{0,\ho} + |\hv|_{1,\ho}}{|\ell(\hv)| + |\hv|_{1,\ho}},
	\end{equation}
	where
	\begin{equation*}
	\ell(\hv) \eq \frac{1}{\sharp \widehat \CT}
	\sum_{\hK \in \widehat \CT} (\CP_{\hK} \hv)(\boldsymbol 0).
	\end{equation*}
	As we detail in Appendix \ref{sec:appendix}, the above definition makes sense
	and we have $\hC_{\mathrm P} < +\infty$.
	
	We will also need the constants
	\begin{equation*}
	\hC_{\mathrm u} \eq \max_{\widehat \CT \in \widehat \CR}
	\left (
	\frac{\max_{K \in \widehat \CT} |K|}{\min_{K \in \widehat \CT} |K|}
	\right )
	\end{equation*}
	as well as
	\begin{equation*}
	C_{\mathrm u}^\ba \eq
	\frac{\max_{K \in \CT_H^\ba} |K|}{\min_{K \in \CT_H^\ba} |K|}
	\qquad
	C_{\mathrm u} \eq \max_{\ba \in \CV_H} C_{\mathrm u}^\ba.
	\end{equation*}

	\subsection{The quasi-interpolation operator}
	\label{subsec:oswald}
	
	The LOD method hinges on a stable quasi-interpolation operator $I_H: H^1_0(\Omega) \to V_H$.
	Here, following \cite{Pet15LODreview}, we consider a standard Oswald-type quasi-interpolation
operator
$I_H: H^1_0(\Omega) \to V_H$. For $v \in H^1_0(\Omega)$, it is defined as
\begin{equation}\label{eq:defintpol}
I_H v \eq \sum_{\ba \in \CVi_H} m^\ba(v) \psi^\ba,
\end{equation}
with
\begin{equation*}
m^\ba(v) \eq \frac{1}{\sharp \ba} \sum_{K \in \CT_H^\ba} (P_K v)(\ba),
\end{equation*}
where $P_K v$ denotes the $L^2(K)$ projection onto $\CP_1(K)$.

$I_H$ is a projection onto $V_H$ ($I_H \circ I_H = I_H$) and we furthermore have
\begin{equation}
\label{eq:IHstabapprox}
\|v-I_H v\|_{0,K}+ H\|\grad (v-I_H v) \|_{0,K} \leq C_I H\|\grad v\|_{0, \UN(K)} \qquad \forall K \in \CT_H
\end{equation}
for all $v \in H^1_0(\Omega)$, see \cite[Sec.~4, eq.~(16)]{Pet15LODreview} and the references therein.
While for the sake of simplicity, we work with the above mentioned operator $I_H$,
we emphasize that other quasi-interpolation operators could be considered, and we
refer the reader to \cite{EHMP16LODimpl} for the required properties.

	\subsection{Motivation and presentation of the ideal method}
	\label{subsec:ideal}
	
	The aim of this section is to construct a generalized finite element method in order to approximate the solution $u$ of \eqref{eq:modelpb} on the coarse mesh $\CT_H$ even if $\sigma$ is a multiscale coefficient and the standard finite element method on $\CT_H$ therefore fails to produce a faithful approximation.
	The idea is to construct a generalized finite element space $\widetilde{V}_H$ of the same dimension as $V_H$, but with better approximation properties.
	We will explain and introduce this idea in detail following the lines of thought for an elliptic diffusion problem (see, e.g., \cite{MP14LOD,Maier20,Pet15LODreview} for more details) and discuss the occurring challenges.
	
	We note first that the projection property of $I_H$ implies the decomposition of $H^1_0(\Omega)$ into the finite element space $V_H$ and the finescale space $W:=\ker(I_H)$, i.e. $H^1_0(\Omega) = V_H\oplus W$.
	We stress that $W$ represents the space of functions with potential finescale oscillations and is
	infinite-dimensional.
	Since $I_H$ is a stable quasi-interpolation onto $V_H$, $I_H u$ already contains many
	characteristic coarse features of the exact solution and hence, may be a sufficiently good
	approximation in many cases. Note, however, that $I_H u$ is typically not found by the finite element method.
	
	A simple calculation shows that it holds for any $v\in H^1_0(\Omega)$ that
	\[a(I_H u, v)= (f, v)_\Omega-a((\operatorname{id} - I_H) u, v).\]
	The last term on the right-hand side vanishes if we restrict the test functions $v$ to the space
	\[\widetilde{V}_H:=\{v\in H^1_0(\Omega) \quad |\quad a(w,v)=0\,\,\, \forall w\in W\}.\]
	This means that $I_Hu$ can be characterized as a Petrov-Galerkin solution with ansatz space $V_H$ and test space $\widetilde{V}_H$.
	This ideal (test) space comes with the decomposition $H^1_0(\Omega)=\widetilde{V}_H\oplus W$ which additionally is orthogonal with respect to $a(\cdot, \cdot)$.
	We now provide an alternative characterization of $\widetilde{V}_H$ that in particular will show that $\dim V_H =\dim \widetilde{V}_H$.
	For this, we introduce a so-called correction operator $\CQ: H^1_0(\Omega)\to W$ by 
	\begin{equation}\label{eq:corrector}
	a(w, \CQ v)=a(w, v)\qquad \text{for all}\quad w\in W.
	\end{equation}
	As a direct consequence, we obtain $a(w, (\operatorname{id}-\CQ)v)=0$ for all $w\in W$ and hence, the characterization of $\widetilde{V}_H$ from above.
	This implies
	\[\widetilde{V}_H = (\operatorname{id}-\CQ)H^1_0(\Omega)=(\operatorname{id}-\CQ)\bigl(I_H(H^1_0(\Omega))+(\operatorname{id}-I_H)H^1_0(\Omega)\bigr)=(\operatorname{id}-\CQ)V_H\]
	because $I_H(H^1_0(\Omega))=V_H$, $(\operatorname{id}-I_H)H^1_0(\Omega)=W$ and $(\operatorname{id}-\CQ)W=\{0\}$.
	
	To sum up, we have $\widetilde{V}_H=(\operatorname{id}-\CQ)V_H$ and, hence, the desired property $\dim \widetilde{V}_H=\dim V_H$. We will use the space $\widetilde{V}_H$ not only as test space, but also as ansatz space in our generalized finite element (Galerkin) method.
	This means that we seek $u_H\in V_H$ such that
	\begin{equation}\label{eq:lodideal}
	a((\operatorname{id}-\CQ)u_H, (\operatorname{id}-\CQ)v_H)=(f, (\operatorname{id}-\CQ)v_H)_\Omega\qquad \text{for all}\quad v_H\in V_H.
	\end{equation}
	A direct consequence of this construction is that $I_H(\operatorname{id}-\CQ)u_H=u_H=I_H u$.
	
	Before providing an a priori error estimate for this (ideal) generalized finite element method, let us discuss the challenges and open problems with the approach presented so far. These challenges will be addressed in the ensuing sections.
	
	\begin{enumerate}
		\item \textbf{Well-posedness of the corrector problems \eqref{eq:corrector}:} Since $a(\cdot, \cdot)$ is not coercive and only satisfies an inf-sup condition over $H^1_0(\Omega)$, we need to show such an inf-sup condition over the space $W$ as well
		(note that in contrast, coercivity is automatically inherited on $W$ for coercive problem).
		More precisely, we will construct in Section \ref{sec:intpol} below an operator $\UT_H\in \CL(W)$ and show that there is $\alpha_\kappa>0$ (independent of $H$) such that for a sufficiently large contrast, we have 
		\begin{equation}\label{eq:infsupW}
		a(w,\UT_H w)\geq \alpha_\kappa|w|_{1,\sigma, \Omega}^2 \qquad \text{for all}\quad w\in W.
		\end{equation}
		
		\item \textbf{Non-locality of the correctors:}
		The corrector problems \eqref{eq:corrector} are global finescale problems and therefore
		as expensive to solve as the original problem on a fine mesh.
		In Section \ref{subsec:local}, we will show how to localize the computation of the correctors to patches of elements. This localization step is motivated by a decay of the correctors which is exponential in units of $H$.
		Due to the $\UT$-coercivity of our problem, (technical) modifications in the construction of the patches for the localized correctors need to be introduced in comparison to standard elliptic problems.
		
		\item \textbf{Infinite-dimensionality of the fine-scale space $W$:}
		Although the corrector problems are localized in Section \ref{subsec:local} as discussed above, they are not yet ready to use since the space $W$ is still infinite-dimensional.
		In practice we therefore introduce a second, fine triangulation $\CT_h$ of $\Omega$ and discretize the corrector problems using this mesh.
		This final step towards a practical method is discussed in Section~\ref{subsec:fullydiscrete}.
	\end{enumerate}
	
	Because of the second and third challenge we call the generalized finite element method in this section ``ideal''.
	We close its presentation with illustrating its good approximation properties, which will be preserved even through the localization and discretization of the corrector problems.

	\begin{proposition}\label{prop:lodideal}
		Assume that the corrector problems \eqref{eq:corrector} are well-posed, i.e., \eqref{eq:infsupW} holds. Then, we have
		\begin{equation}\label{eq:infsupideal}
		\inf_{v_H\in V_H\setminus\{0\}}\sup_{\psi_H\in V_H\setminus\{0\}}\frac{a((\operatorname{id}-\CQ)v_H, (\operatorname{id}-\CQ)\psi_H)}{|v_H|_{1, \sigma,\Omega}\, |\psi_H|_{1,\sigma, \Omega}}\geq \tilde \alpha.
		\end{equation}
		where $\widetilde \alpha = \alpha C_I^{-2}\, \|\UT\|_{\CL(H^1_0(\Omega))}$ and $C_I$ is the interpolation constant from \eqref{eq:IHstabapprox}.
		Moreover, the unique solution $u_H$ of \eqref{eq:lodideal} fulfills the following error estimate
		\[|u-(\operatorname{id}-\CQ)u_H|_{1, \sigma,\Omega}\leq \alpha_\kappa^{-1} C_I\, \|\UT_H\|_{\CL(W)}\, H\|f\|_{0, \Omega}.\]
	\end{proposition}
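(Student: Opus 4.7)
The plan is to prove the two statements in succession, exploiting the $\UT$-coercivity of $a(\cdot,\cdot)$ on $H^1_0(\Omega)$ from Section~\ref{subsec:Tcoercive} together with the $a$-orthogonal decomposition $H^1_0(\Omega) = \widetilde V_H \oplus W$ induced by the corrector $\CQ$. For the inf-sup bound \eqref{eq:infsupideal}, fix $v_H \in V_H \setminus \{0\}$ and set $\tilde v \eq (\operatorname{id}-\CQ)v_H \in \widetilde V_H$, so that the global $\UT$-coercivity gives $a(\tilde v, \UT \tilde v) \geq \alpha\, |\tilde v|_{1,\sigma,\Omega}^2$. The key step, which departs from the standard coercive LOD analysis where $\psi_H = v_H$ suffices, is to take $\psi_H \eq I_H (\UT \tilde v) \in V_H$. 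The remainder $r \eq \UT \tilde v - (\operatorname{id}-\CQ)\psi_H$ lies in $W$, since $I_H r = I_H \UT \tilde v - \psi_H + I_H \CQ \psi_H = 0$ (using $\CQ \psi_H \in W = \ker I_H$ and the projection property of $I_H$). The symmetry of $a(\cdot,\cdot)$ combined with the orthogonality $a(W, \widetilde V_H) = 0$ then yields $a(\tilde v, (\operatorname{id}-\CQ)\psi_H) = a(\tilde v, \UT \tilde v) \geq \alpha\, |\tilde v|_{1,\sigma,\Omega}^2$. It remains to bound $|v_H|_{1,\sigma,\Omega}$ and $|\psi_H|_{1,\sigma,\Omega}$ from above in terms of $|\tilde v|_{1,\sigma,\Omega}$: for $v_H = I_H \tilde v$ the stability \eqref{eq:IHstabapprox} yields a $C_I$-bound, and combining the same estimate with the continuity of $\UT$ brings in the additional factor $\|\UT\|_{\CL(H^1_0(\Omega))}$, producing $\widetilde \alpha$.

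For the error estimate, I would first observe that $u_H = I_H u$. Indeed, $w_u \eq u - (\operatorname{id}-\CQ) I_H u \in W$ because $I_H w_u = I_H u - I_H u + I_H \CQ I_H u = 0$, and the exact equation $a(u,\cdot) = (f,\cdot)_\Omega$ combined with $a$-orthogonality gives $a((\operatorname{id}-\CQ) I_H u, (\operatorname{id}-\CQ)\psi_H) = (f, (\operatorname{id}-\CQ)\psi_H)_\Omega$ for all $\psi_H \in V_H$; the inf-sup bound just proved then identifies $u_H = I_H u$. Consequently the error coincides with $w_u \in W$. Applying \eqref{eq:infsupW} to $w_u$ with test $\UT_H w_u \in W$ and using once more $a(W, \widetilde V_H) = 0$,
\[
\alpha_\kappa\, |w_u|_{1,\sigma,\Omega}^2 \leq a(w_u, \UT_H w_u) = a(u, \UT_H w_u) = (f, \UT_H w_u)_\Omega.
\]
Since $\UT_H w_u \in W = \ker I_H$, element-wise summation of \eqref{eq:IHstabapprox} gives $\|\UT_H w_u\|_{0,\Omega} \leq C_I H\, |\UT_H w_u|_{1,\Omega} \leq C_I H\, \|\UT_H\|_{\CL(W)}\, |w_u|_{1,\Omega}$; a Cauchy--Schwarz on the right-hand side and division by $|w_u|_{1,\sigma,\Omega}$ then produces the claimed bound.

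The main obstacle is the inf-sup argument itself: in contrast with the coercive case, one cannot simply test with $v_H$ but must construct the test function through the global operator $\UT$, and the $a$-orthogonal decomposition is essential for discarding the fine-scale remainder $r$. A secondary technical point is reconciling the weighted energy norm $|\cdot|_{1,\sigma,\Omega}$ with the unweighted stability \eqref{eq:IHstabapprox}, which accounts for the precise dependence of $\widetilde \alpha$ on $C_I$ and $\|\UT\|_{\CL(H^1_0(\Omega))}$.
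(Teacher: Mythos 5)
Your argument is correct, and it is worth noting that the paper does not actually prove Proposition \ref{prop:lodideal}: it declares the result ``classical for the LOD applied to inf-sup stable problems'' and refers to \cite[Chapter 2]{Maier20}. What you have written is a correct, self-contained rendering of exactly that standard argument, adapted to the symmetric $\UT$-coercive setting: the choice $\psi_H = I_H(\UT\tilde v)$ together with the $a$-orthogonality $a(W,\widetilde V_H)=0$ (which transfers to $a(\widetilde V_H,W)=0$ by symmetry of $a$) is the right way to pass from the global $\UT$-coercivity on $H^1_0(\Omega)$ to the inf-sup condition on the coarse pair, and the identification $u_H = I_H u$ followed by testing the kernel error $w_u$ with $\UT_H w_u$ and invoking the $O(H)$ approximation property \eqref{eq:IHstabapprox} on $\ker I_H$ is precisely how the $H\|f\|_{0,\Omega}$ rate arises. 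Two bookkeeping remarks. First, passing from the element-wise bound \eqref{eq:IHstabapprox} to the global bounds $|I_H v|_{1,\Omega}\lesssim |v|_{1,\Omega}$ and $\|\UT_H w_u\|_{0,\Omega}\lesssim H|\UT_H w_u|_{1,\Omega}$ silently absorbs a finite-overlap constant, and converting between $|\cdot|_{1,\Omega}$ and $|\cdot|_{1,\sigma,\Omega}$ introduces factors of $\sigma_+^{\pm 1/2}$ and $\|\sigma\|_{L^\infty}^{1/2}$ that the stated constants do not display; you flag this, and the same imprecision is present in the proposition as stated. Second, your derivation produces $\widetilde\alpha = \alpha\, C_I^{-2}\,\|\UT\|_{\CL(H^1_0(\Omega))}^{-1}$, with the operator norm in the denominator, which is the dimensionally sensible form; the exponent on $\|\UT\|$ in the statement appears to be a typo, and your computation in fact recovers the intended constant. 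Neither point affects the validity of the proof.
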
 
Note that the inf-sup condition automatically implies the well-posedness of \eqref{eq:lodideal}.
Further, we stress that $\|\UT_H\|_{\CL(W)}$ is independent of $H$.
The linear convergence of the error in Proposition
\ref{prop:lodideal} is optimal for lowest-order elements and moreover, this result is
independent of the regularity of the exact solution (which may be arbitrarily low, since
$\sigma \in L^\infty(\Omega)$). Proposition \ref{prop:lodideal} is classical
for the LOD applied to inf-sup stable problems and we refer to \cite[Chapter 2]{Maier20} for a proof.
We emphasize that the assumption $f\in L^2(\Omega)$ is essential to obtain the linear rate, cf.~\cite{Pet15LODreview} for a general discussion.

\subsection{Weak T-coercivity}
\label{sec_weak_Tcoer}
In this paragraph, we briefly discuss how our results transfer to the case that $a(\cdot, \cdot)$
is weakly $\UT$-coercive, which means that instead of \eqref{eq_T_coercivity}, $a(v, \UT v)$ only
satisfies a G\aa rding-type inequality \cite{BCZ10signchangingTcoercive}, namely
\[ a(v,\UT v) \geq \alpha |v|_{1,\sigma,\Omega}^2 - \mu \|v\|_{0,\Omega}^2, \]
where $\alpha,\mu > 0$ are positive constants.
As mentioned, this concept allows to treat rather general sign-changing problems with polytopal interfaces, see \cite{BCC12signchangingTcoercive,BCC18signchangingmesh}.
	We stress that in the case of the Helmholtz equation, one also considers a sesquilinear form satisfying a similar G\aa rding inequality (without an operator $\UT$, though).

Assuming in addition that the solution $u$ to \eqref{eq:modelpb} is unique, i.e., \eqref{eq:modelpb} is well-posed, the problem can
be approximated with the proposed generalized finite element method, but the described
theory does not immediately apply. In particular, the study of the well-posedness of the corrector problems
and their exponential decay requires additional arguments.

However, the Helmholtz equation (with positive coefficients),
was analyzed in \cite{GP15scatteringPG,P17LODhelmholtz,PV19helmholtzhighcontrast}.
In particular, it is shown that the corrector problems are well-posed under a resolution
condition on $H$ because the $L^2$-perturbation in the G\aa rding inequality can be absorbed
for functions in the kernel $W$ due to the property \eqref{eq:IHstabapprox} of $I_H$.
The authors believe that this argument carries over to the weakly $\UT$-coercive setting for problems
with sign-changing coefficients, so that we can establish strong $\UT_H$-coercivity of
$a(\cdot, \cdot)$ over $W$ under a resolution condition (smallness assumption) on $H$.

\section{{\UT}-coercivity in the kernel of $I_H$}
\label{sec:intpol}

	As described above, the LOD method relies on ``corrector'' problems set in
	the kernel $W$ of $I_H$. The purpose of this section is to show that the bilinear
	form $a(\cdot,\cdot)$ is inf-sup stable over $W$. To do so, we build
	a discrete counterpart $\UT_H$ of the the operator $\UT$ that maps
	the kernel $W$ into itself.

\subsection{Preliminary results}
\label{subsec:discretization}

	We start by recording two preliminary results in Lemma \ref{lem:estimatema}
	and Lemma \ref{lem:poincare}. The first is concerned with the scaling of
	the weights $m^\ba$ appearing in the definition of $I_H$, while the second
	is a Poincar\'e-type inequality for functions in $W$. As the proofs are
	rather technical, we postpone them to Appendix \ref{appendix_IH} to ease the reading.
	
	\begin{lemma}\label{lem:estimatema}
		Let $\ba\in \CV_H$. The estimate
		\begin{equation}
		\label{eq_estimate_ma}
		|m^\ba(v)|
		\leq
		\hC_{\mathrm inf}
		\left (\frac{1}{\min_{K \in \CT_H^\ba} |K|}\right )^{1/2}
		\|v\|_{0,\oa}
		\end{equation}
		holds true for all $v \in H^1_0(\Omega)$.
	\end{lemma}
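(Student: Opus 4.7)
The plan is to bound each pointwise value $(P_K v)(\ba)$ that appears in the definition of $m^\ba(v)$ and then average. Since $m^\ba(v) = (\sharp \ba)^{-1} \sum_{K \in \CT_H^\ba} (P_K v)(\ba)$, the triangle inequality immediately gives
\[
|m^\ba(v)| \leq \frac{1}{\sharp \ba} \sum_{K \in \CT_H^\ba} |(P_K v)(\ba)|,
\]
so the whole game is to estimate $|(P_K v)(\ba)|$ for a single element $K$ by $\hC_{\mathrm inf} |K|^{-1/2} \|v\|_{0,K}$.

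The central estimate I will prove is that for any $q \in \CP_1(K)$,
\[
\|q\|_{0,\infty,K} \leq \hC_{\mathrm inf} \, |K|^{-1/2} \|q\|_{0,K}.
\]
This is obtained by transporting the definition \eqref{eq_inf_inv} of $\hC_{\mathrm inf}$ from the reference simplex $\hK$ to $K$ via an affine bijection $F \colon \hK \to K$. The key observation is that the $L^\infty$ norm is invariant under the pullback $q \mapsto q \circ F$, whereas the $L^2$ norm scales by $(|K|/|\hK|)^{1/2}$; hence the prefactor $|\hK|^{1/2}$ built into the definition of $\hC_{\mathrm inf}$ cancels exactly and leaves $|K|^{-1/2}$ on the physical element. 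Applied to $q = P_K v \in \CP_1(K)$, and in particular at the vertex $\ba$, this gives
\[
|(P_K v)(\ba)| \leq \hC_{\mathrm inf} \, |K|^{-1/2} \|P_K v\|_{0,K} \leq \hC_{\mathrm inf} \, |K|^{-1/2} \|v\|_{0,K},
\]
where the last inequality uses the $L^2$-stability of the projection $P_K$.

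Finally, I bound each $|K|^{-1/2}$ by $(\min_{K' \in \CT_H^\ba} |K'|)^{-1/2}$ and each $\|v\|_{0,K}$ by the crude bound $\|v\|_{0,\oa}$ (since $K \subset \oa$). Summing the $\sharp \ba$ identical upper bounds and dividing by $\sharp \ba$ yields exactly \eqref{eq_estimate_ma}.

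The only subtle point is tracking the volume factor through the affine change of variables; everything else is routine. Because the affine map is bijective and $\CP_1$-preserving, and because $\hC_{\mathrm inf}$ was defined with $|\hK|^{1/2}$ already absorbed into it, the final constant is genuinely shape-regular-independent. No use of $\sharp \ba$ or of quasi-uniformity beyond what is already encoded is needed for this particular estimate; the proof is local to each element of the star $\CT_H^\ba$.
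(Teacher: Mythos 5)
Your proof is correct and follows essentially the same route as the paper's: bound $|(P_K v)(\ba)|$ by $\|P_K v\|_{0,\infty,K}$, transport the inverse estimate encoded in $\hC_{\mathrm inf}$ from the reference simplex (exploiting the $L^\infty$-invariance and $L^2$-scaling of the affine pullback), and use the $L^2$-stability of $P_K$. The only cosmetic difference is that the paper replaces the average over $\CT_H^\ba$ by the maximum over a single worst element $K_\star$ before scaling, whereas you keep the sum; both yield \eqref{eq_estimate_ma}.
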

	
	\begin{lemma}\label{lem:poincare}
		Let $\ba\in\CV_H$ and assume that $w \in H^1(\oa)$ satisfies $m^\ba(w)=0$. Then, it holds
		\[\|w\|_{0,\oa}
		\leq
		C_{\mathrm P} H |w|_{1,\oa},
		\]
		where
		\begin{equation*}
		C_{\mathrm P} \eq \frac{\sqrt{\hC_{\mathrm u} C_{\mathrm u}} \hC_{\mathrm P}}{\widehat \rho} |w|_{1,\oa}.
		\end{equation*}
	\end{lemma}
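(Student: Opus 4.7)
The plan is to reduce the estimate on the physical patch $\oa$ to the reference inequality encoded in $\hC_{\mathrm P}$ by pulling back $w$ via the piecewise-affine, bilipschitz map $\CF: \ho \to \oa$ from Section \ref{section_reference_patch}. Setting $\hw \eq w \circ \CF \in H^1(\ho)$, the argument proceeds in three steps: transferring the vanishing condition to the reference patch, applying the reference Poincar\'e bound, and pushing back with careful bookkeeping of constants.

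The key to the first step is the identity $m^\ba(w) = \ell(\hw)$. Since each $\CF|_{\hK}$ is affine onto some $K \in \CT_H^\ba$ with constant Jacobian $|K|/|\hK|$, the defining relation $\int_K(P_K w) p = \int_K w p$ pulls back under change of variables to $\int_{\hK}((P_K w)\circ\CF)\, \hq = \int_{\hK} \hw\, \hq$ for every $\hq \in \CP_1(\hK)$. This shows $(P_K w)\circ\CF = P_{\hK}\hw$; evaluating at $\boldsymbol 0 = \CF^{-1}(\ba)$ and averaging over $\CT_H^\ba$ (which is in bijection with $\widehat \CT$, with matching cardinality $\sharp \ba$) yields the identity. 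The assumption $m^\ba(w)=0$ therefore transfers to $\ell(\hw)=0$, whereupon \eqref{eq_poincare_tmp} delivers the reference bound $\|\hw\|_{0,\ho} \leq \hC_{\mathrm P} |\hw|_{1,\ho}$.

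For the last step, I would use the change of variables formulas elementwise: with $J_K \eq |K|/|\hK|$, one has $\|w\|_{0,K}^2 = J_K \|\hw\|_{0,\hK}^2$ and, since the bound $|D\CF| \leq H/\widehat\rho$ follows from $\rho_{\hK}\geq\widehat\rho$ together with the diameter bounds on $K$, also $|w|_{1,K}^2 \geq J_K (\widehat\rho/H)^2 |\hw|_{1,\hK}^2$. Summing over $K \in \CT_H^\ba$ and chaining with the reference bound yields
\[
\|w\|_{0,\oa}^2
\leq
\hC_{\mathrm P}^2\, \frac{H^2}{\widehat\rho^{\,2}}\, \frac{\max_K J_K}{\min_K J_K}\, |w|_{1,\oa}^2,
\]
and the volume-ratio factor splits as $\max_K J_K/\min_K J_K \leq \hC_{\mathrm u} C_{\mathrm u}$ by the separate uniformity estimates $\max_{\hK}|\hK|/\min_{\hK}|\hK| \leq \hC_{\mathrm u}$ and $\max_K|K|/\min_K|K| \leq C_{\mathrm u}$. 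Taking square roots gives the stated inequality.

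The main obstacle is this third step: because the Jacobian $J_K$ is only piecewise constant across the patch, the volume ratios do not collapse into a single global scaling factor, and one must separately control the dispersion of element sizes in the reference patch ($\hC_{\mathrm u}$) and in the physical patch ($C_{\mathrm u}$) before recombining them into the $\sqrt{\hC_{\mathrm u} C_{\mathrm u}}$ factor. Everything else---the identity $m^\ba(w) = \ell(\hw)$, a direct consequence of affine invariance of the $L^2$-projection, and the reference Poincar\'e bound, which is the definition of $\hC_{\mathrm P}$---is essentially mechanical, which is presumably why the authors relegate the detailed calculation to the appendix.
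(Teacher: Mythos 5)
Your proof is correct and follows essentially the same route as the paper's: pull back to the reference patch via $\CF$, identify $m^\ba(w)=\widehat m(\hw)$, apply the reference inequality \eqref{eq_poincare_tmp}, and return by elementwise scaling, with the dispersion of the piecewise-constant Jacobian controlled by $\sqrt{\hC_{\mathrm u} C_{\mathrm u}}$. The only item the paper additionally folds into this proof is the compactness (Deny--Lions-type) argument establishing that $\hC_{\mathrm P}<+\infty$, which you take as given.
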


\subsection{A discrete operator $\UT_H$}

	A key ingredient in the construction of the operator $\UT_H$ is
	the introduction of a ``dual weight function'' $\eta^\ba$ associated with
	each vertex $\ba \in \CV_H^+ \cup \CV_H^0$. The purpose of such functions,
	is to ``rectify'' the original $\UT$ operator so that $\UT_H$ maps into
	the kernel $W$ of $I_H$. Importantly, these functions need to be supported
	in $\Omega_+$, so that $\UT_H$ has the same ``symmetrization'' property as $\UT$
	(see \eqref{eq_T_minus}).
	
	The actual construction of the dual functions is technical, so that the proof
	of the following Lemma is delayed until Appendix \ref{appendix_eta}.
	
	\begin{lemma}\label{lem:defetaa}
		For all $\ba\in \CV_H^+\cup \CV_H^0$, there exists $\eta^\ba\in H^1_0(\Omega)$ with $\supp\eta^\ba\subset \Omega_+$ such that
		\[m^{\ba'}(\eta^\ba)=\delta_{\ba',\ba} \qquad \forall \ba'\in \CV_H\]
		and
		\[|\eta^\ba|_{1, \Omega_+}\leq \hC_{\mathrm norm} \hC_{\mathrm inv} \max_{K \in \CT_H^\ba}\frac{|K|^{1/2}}{\rho_K}.\]
	\end{lemma}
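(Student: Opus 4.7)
The strategy is to build $\eta^\ba$ as a bubble-weighted affine polynomial supported on a single element. Since $\ba \in \overline{\Op}$ and the mesh resolves the interface $\Gamma$, at least one simplex $K_\ba \in \CT_H^\ba$ lies entirely in $\overline{\Op}$; fix such a $K_\ba$. Let $b_{K_\ba}$ denote the element bubble on $K_\ba$ (the product of its $d+1$ barycentric coordinates, equivalently the image of $\hb$ under the affine pullback from $\hK$ to $K_\ba$). I will seek $\eta^\ba$ of the form $b_{K_\ba}\, q$ on $K_\ba$ and zero outside, with $q \in \CP_1(K_\ba)$ to be determined. Because $b_{K_\ba}$ vanishes on $\partial K_\ba$, the extension by zero yields an $H^1_0(\Omega)$ function automatically supported in $K_\ba \subset \overline{\Op}$, taking care of both the regularity and the support requirement.

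To determine $q$, I use that $\eta^\ba$ being supported on the single element $K_\ba$ makes the sum defining $m^{\ba'}(\eta^\ba)$ collapse to at most one term, vanishing unless $\ba' \in \CV(K_\ba)$. The dual-basis conditions then reduce to
\[
P_{K_\ba}(b_{K_\ba}\, q) = \sharp\ba \cdot \lambda_\ba^{K_\ba},
\]
where $\lambda_\ba^{K_\ba}$ is the barycentric coordinate of $K_\ba$ at $\ba$. This linear system in the finite-dimensional space $\CP_1(K_\ba)$ admits a unique solution because the symmetric bilinear form $(p, r) \mapsto \int_{K_\ba} b_{K_\ba}\, p\, r$ is positive definite on $\CP_1(K_\ba)$ (the bubble being strictly positive in the interior). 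A direct check then confirms $m^{\ba'}(\eta^\ba) = \delta_{\ba,\ba'}$ for every $\ba' \in \CV_H$.

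The $H^1$-bound will be obtained by pulling back to the reference simplex $\hK$ via the affine $\CF: \hK \to K_\ba$. Writing $\hat q = q \circ \CF \in \CP_1(\hK)$, the projection identity becomes $P_{\hK}(\hb\, \hat q) = \sharp\ba \cdot \hat\lambda_\ba$. Testing with $\hat q$ gives
\[
\|\hb^{1/2}\hat q\|_{0,\hK}^2 = \sharp\ba\,(\hat\lambda_\ba, \hat q)_{\hK} \leq \sharp\ba\, |\hK|^{1/2}\, \|\hat q\|_{0,\hK},
\]
and combining this with $\|\hat q\|_{0,\hK} \leq \hC_{\mathrm norm}\|\hb^{1/2}\hat q\|_{0,\hK}$ from \eqref{eq_bubble_norm} controls $\|\hat q\|_{0,\hK}$. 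The inverse inequality \eqref{eq_inf_inv} applied to the affine factor $\hat q$, combined with the $L^\infty$ boundedness of $\hb$ and $\nabla \hb$ on $\hK$, yields a bound on $|\hb\, \hat q|_{1,\hK}$; a standard change of variables using $|\det D\CF| \sim |K_\ba|$ and $\|(D\CF)^{-1}\| \lesssim \rho_{K_\ba}^{-1}$ then introduces the geometric factor $|K_\ba|^{1/2}/\rho_{K_\ba} \leq \max_{K \in \CT_H^\ba} |K|^{1/2}/\rho_K$.

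The main obstacle will be ordering Cauchy--Schwarz, the norm equivalence, and the inverse estimate so that precisely the two explicit constants $\hC_{\mathrm norm}$ and $\hC_{\mathrm inv}$ survive in the final product, while the remaining reference-element quantities (the $L^\infty$ norms of $\hb$ and $\nabla\hb$, the shape-regularity bound on $\sharp\ba$, and $|\hK|$) are absorbed silently. Since this constant-tracking is somewhat delicate but not conceptually difficult, it seems natural, as the authors do, to postpone the detailed computation to an appendix.
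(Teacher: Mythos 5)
Your construction is essentially identical to the paper's: the authors likewise fix a single simplex $K_\star\subset\oa\cap\Op$, build $\eta^\ba$ there as a bubble function times an affine polynomial whose $L^2(K_\star)$-projection onto $\CP_1(K_\star)$ is prescribed to be (a multiple of) the barycentric coordinate $\psi^\ba|_{K_\star}$, establish solvability via the positive definiteness of the $\hb$-weighted inner product on $\CP_1$, and obtain the gradient bound from the norm equivalence \eqref{eq_bubble_norm}, the inverse estimate \eqref{eq_inf_inv}, and affine scaling, exactly as in Lemma \ref{lem:refeta}. The one place you are arguably more careful than the paper is the normalization: since $\eta^\ba$ is supported in a single element, $m^{\ba}(\eta^\ba)=\tfrac{1}{\sharp\ba}(P_{K_\star}\eta^\ba)(\ba)$, so the factor $\sharp\ba$ you insert into the moment condition is genuinely needed to achieve $m^{\ba'}(\eta^\ba)=\delta_{\ba,\ba'}$ (the paper's displayed computation silently drops it), at the cost of a corresponding shape-regularity factor appearing in the final gradient estimate.
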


We are now ready to introduce our ``discrete'' $\UT$-operator. For $v \in H^1_0(\Omega)$,
it is defined as a modified version of $\UT$ by:
\begin{equation}
\label{eq:defTH}
\UT_H v = \UT v - \sum_{\ba \in \CV_H^0 \cup \CV_H^+} m^\ba(\UT v) \eta^\ba.
\end{equation}

We will establish in the next Section that $a(\cdot, \cdot)$ is indeed $\UT_H$-coercive over $W$.
	In addition, let us remark that, as shown in the appendix, $\supp \eta_\ba \subset \oa \cap \Omega_+$ for all $\ba \in \CV_H$. As a result, we have
	\begin{subequations}
		\label{eq:suppTH}
		\begin{equation}
		\supp(\UT_H v) \cap \Omega_- = \supp(\UT v) \cap \Omega_-
		\end{equation}
		and
		\begin{equation}
		\supp(\UT_H v) \cap \Omega_+ =
		\left \{
		K \in \CT_H \; | \;
		\supp(\UT v) \cap \overline{K} \neq \emptyset
		\right \} \cap \Omega_+
		\quad
		\end{equation}
	\end{subequations}
	for all $v \in H^1_0(\Omega)$.

	\begin{remark}
		An instinctive choice for the discrete operator is $\UT_H \eq (\operatorname{id} - I_H) \circ \UT$,
		or equivalently
		\begin{equation*}
		\UT_H v = \UT v - \sum_{\ba \in \CV_H^0 \cup \CV_H^+} m^\ba(\UT v) \psi^\ba,
		\quad v \in H^1_0(\Omega).
		\end{equation*}
		While this definition automatically maps into $W$, it is not satisfactory, since
		this operator does not flip the sign of the argument in $\Omega_-$ (requirement
		\eqref{eq_T_minus}). Indeed, the corrections at the vertices lying on the interface
		would ``leak'' in $\Omega_-$ as the support of $\psi^\ba$ intersects $\Omega_-$ in
		this case.
	\end{remark}
	
	\begin{remark}
		In the definition of $\UT_H$, the correction functions $\eta^\ba$ are supported
		in $\Op$, since we chose to symmetrize ``from $\Om$ to $\Op$''. If the other direction,
		is considered (i.e. $\US \in \CL(H^1_{\partial \Omega}(\Op); H^1_{\partial \Omega}(\Om))$),
		then these functions have to be supported in $\Om$. As can be seen from the proof of
		Lemma \ref{lem:defetaa} in the appendix, it is easy to design $\eta^\ba$ with support in
		$\Om$ instead of $\Op$, so that every ``direction'' can be considered for symmetrization
		purposes.
	\end{remark}

\subsection{{\UT}-coercivity in the kernel of $I_H$}
\label{subsec:intpol}

	We are now ready to establish the inf-sup stability of $a(\cdot,\cdot)$ over $W$,
	under the assumption that the contrast $\contrast$ is sufficiently large.
	The proof is based on Theorem \ref{prop:T_coercivity} with the operator
	$\UY \eq \UT_H$. Hence, we verify that $\UT_H$ satisfies the requirements
	of Theorem \ref{prop:T_coercivity}.
We first show that $\UT_H \in \CL(W)$.
\begin{lemma}
	We have $\UT_H \in \CL(W)$ with the operator norm bounded independently of $H$.
\end{lemma}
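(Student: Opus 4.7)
The plan is to verify two requirements separately: (i) $\UT_H$ maps $W$ into itself, and (ii) the operator norm with respect to $|\cdot|_{1,\Omega}$ is bounded independently of $H$.

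For (i), I would compute $m^{\ba'}(\UT_H v)$ for each interior vertex $\ba' \in \CVi_H$ and check that it vanishes whenever $v \in W$. By the biorthogonality $m^{\ba'}(\eta^\ba) = \delta_{\ba',\ba}$ from Lemma \ref{lem:defetaa}, the correction in \eqref{eq:defTH} is designed so that $m^{\ba'}(\UT_H v) = 0$ automatically for $\ba' \in \CV_H^0 \cup \CV_H^+$. For $\ba' \in \CV_H^-$ the correction contributes nothing and one needs $m^{\ba'}(\UT v)=0$ directly. Here the interface-resolving assumption on $\CT_H$ is essential: it ensures every element of $\CT_H^{\ba'}$ lies in $\overline{\Om}$, where $\UT v = -v$, so $m^{\ba'}(\UT v) = -m^{\ba'}(v) = 0$ since $v \in W$.

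For (ii), I would split $|\UT_H v|_{1,\Omega}^2$ into contributions from $\Om$ and $\Op$. Since every $\eta^\ba$ has support in $\Op$ by Lemma \ref{lem:defetaa}, the restriction $\UT_H v|_{\Om}$ equals $\UT v|_{\Om} = -v|_{\Om}$, so the $\Om$-part is exactly $|v|_{1,\Om}^2$. On $\Op$, the triangle inequality together with the $H^1$-stability of $\UT$ from \eqref{eq:TL2stab} reduces the task to bounding the correction sum $\sum_{\ba \in \CV_H^0 \cup \CV_H^+} m^\ba(\UT v) \eta^\ba$ in the $|\cdot|_{1,\Op}$-seminorm.

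Controlling this correction sum is the main obstacle. Using the finite overlap of the patches $\oa$, the estimate reduces to a term-by-term bound on $|m^\ba(\UT v)|^2 |\eta^\ba|_{1,\Op}^2$. Lemma \ref{lem:estimatema} supplies a factor $\lesssim H^{-d/2}\|\UT v\|_{0,\oa}$, while Lemma \ref{lem:defetaa} combined with quasi-uniformity yields $|\eta^\ba|_{1,\Op} \lesssim H^{d/2-1}$, so each patch contribution is of order $H^{-2}\|\UT v\|_{0,\oa}^2$. The apparently dangerous $H^{-1}$ factor is precisely compensated by the Poincar\'e-type bound $\|v\|_{0,\Omega} \lesssim H |v|_{1,\Omega}$ that holds for $v \in W = \ker I_H$ as a direct consequence of \eqref{eq:IHstabapprox}; the $L^2$-stability of $\UT$ in \eqref{eq:TL2stab} transfers this bound to $\UT v$. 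The resulting constant depends only on $C_I$, on the shape-regularity, and on the norm of the symmetrization operator $\US$, hence is independent of $H$, as required.
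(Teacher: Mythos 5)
Your proposal is correct and follows essentially the same route as the paper: the mapping property is verified vertex by vertex using the biorthogonality $m^{\ba'}(\eta^\ba)=\delta_{\ba',\ba}$ together with $\oa\subset\overline{\Om}$ for $\ba\in\CV_H^-$, and the $H$-independent norm bound comes from pairing the $H^{-d/2}$ scaling of $m^\ba$ with the $H^{d/2-1}$ scaling of $\eta^\ba$ and absorbing the resulting $H^{-1}$ via the kernel Poincar\'e inequality. The paper states the norm bound tersely in this lemma and carries out the detailed computation only later (in the proof of Lemma \ref{lemma_T_H}), but the mechanism you describe is exactly the one used there.
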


\begin{proof}
	We need to show that $\UT_H w \in W$ for every $w \in W$.
	Let us thus pick an arbitrary $w \in W$, so that
	\begin{equation}
	\label{tmp_weigth_ortho}
	m^\ba(w) = 0 \quad \forall \ba \in \CVi_H.
	\end{equation}
	Then, let $\ba \in \CVi_H$, we have
	\begin{align*}
	m^\ba(\UT_H w)
	&=
	m^\ba(\UT w) - \sum_{\ba' \in \CV_H^0 \cup \CV_H^+}
	m^{\ba'}(\UT w) m^{\ba}(\eta^{\ba'})
	\\
	&=
	m^\ba(\UT w) - \sum_{\ba' \in \CV_H^0\cup\CV_H^+}
	m^{\ba'}(\UT w) \delta_{\ba',\ba},
	\end{align*}
	and it follows that $m^\ba(\UT_H v) = 0$ whenever $\ba \in \CV_H^0 \cup \CV_H^+$.
	If on the other hand $\ba \in \CV_H^-$, recalling
	\eqref{eq_TH_minus} and observing that $\oa \subset \Omega_-$, we have
	\begin{align*}
	m^\ba(\UT_H w) = m^\ba(\UT w) = -m^\ba(w) = 0
	\end{align*}
	since $w \in W$. This shows that $I_H(\UT_H v) = 0$.
	The $H$-independent bound on the operator norm of $\UT_H$ follows by the scalings of $m^\ba$ and $\eta^\ba$
	(see Lemmas \ref{lem:estimatema} and \ref{lem:defetaa}).
\end{proof}

We now verify that $\UT_H$ satisfies requirement \eqref{eq_T} of Theorem
\ref{prop:T_coercivity}.

\begin{lemma}
	\label{lemma_T_H}
	Let $w \in W$, it holds that
	\begin{equation}
	\label{eq_TH_minus}
	\left . \left (\UT_H w\right ) \right |_\Om = -w|_\Om.
	\end{equation}
	In addition, we have
	\begin{equation}
	\label{eq_TH_stab}
	|w-\UT_H w|_{1,\Op} \leq C_\pm(\UT_H) |w|_{1,\Om},
	\end{equation}
	with
	\begin{equation*}
	C_\pm(\UT_H)
	\eq
	C_\pm(\UT)
	+
	2 (d+1) \hC_{\mathrm P} \hC_{\mathrm norm} \hC_{\mathrm inf}\hC_{\mathrm inv} C_{\mathrm u}\kappa
	\sqrt{2 + C_\pm^0(\UT)^2},
	\end{equation*}
	where $\kappa$, $C_\pm(\UT)$, and $C_\pm^0(\UT)$ are introduced in Section \ref{subsec:Tcoercive}
	and the other constants are explained in
	Sections \ref{section_reference_element} and \ref{section_reference_patch}.
\end{lemma}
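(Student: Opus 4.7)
The first assertion is immediate from the construction: each $\eta^{\ba}$ is supported in $\Op$ by Lemma~\ref{lem:defetaa}, so the correction sum in \eqref{eq:defTH} vanishes on $\Om$, whence $\UT_H w|_{\Om} = \UT w|_{\Om} = -w|_{\Om}$ by the definition of $\UT$. For the stability estimate, I would split
\[
w - \UT_H w = (w - \UT w) + \sum_{\ba \in \CV_H^0 \cup \CV_H^+} m^{\ba}(\UT w)\,\eta^{\ba},
\]
apply the triangle inequality in $|\cdot|_{1,\Op}$, and control the first summand by $C_{\pm}(\UT)|w|_{1,\Om}$ using \eqref{eq:TL2stab}; the remaining task is to absorb the correction sum into a $|w|_{1,\Om}$ bound.

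For the correction, I would first use the bounded overlap (each element lies in $d+1$ patches $\oa$) to pass to the overlapping sum
\[
\left|\sum_{\ba} m^{\ba}(\UT w)\,\eta^{\ba}\right|_{1,\Op}^2 \leq (d+1)\sum_{\ba} |m^{\ba}(\UT w)|^2\, |\eta^{\ba}|_{1,\Op}^2,
\]
combine Lemma~\ref{lem:estimatema} and Lemma~\ref{lem:defetaa}, and use shape-regularity and quasi-uniformity to pair the weights $|K|^{-1/2}$ and $|K|^{1/2}/\rho_K$ into a single factor of order $\kappa/H$ (producing $\hC_{\mathrm inf}\hC_{\mathrm norm}\hC_{\mathrm inv}$, $C_{\mathrm u}$ and $\kappa$ in the final constant). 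The decisive manipulation is to exploit $w \in W$, i.e.\ $m^{\ba}(w)=0$, in order to rewrite $m^{\ba}(\UT w) = m^{\ba}(\UT w + w)$. Since $\UT w + w$ vanishes on $\Om$ and equals $2(w - \US w)$ on $\Op$, Lemma~\ref{lem:estimatema} now only sees the $\Op$-side contribution, and the $L^2$-stability of $\US$ from Section~\ref{subsec:Tcoercive}, which reads $\|\US v\|_{0,\Op}\leq (C_{\pm}^0(\UT)/2)\|v\|_{0,\Om}$, combines with the bounded overlap to yield the factor $\sqrt{2+C_{\pm}^0(\UT)^2}$ after separating the $\|w\|_{0,\oa\cap\Op}$ and $\|\US w\|_{0,\oa\cap\Op}$ pieces.

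The main obstacle will be the last step, namely converting the remaining $\|w\|_{0,\Om}/H$ into $|w|_{1,\Om}$ without leaking $|w|_{1,\Op}$ into the bound. Here the patch Poincar\'e inequality of Lemma~\ref{lem:poincare} applies precisely because $m^{\ba}(w)=0$, and the assumption that $\CT_H$ resolves $\Gamma$ is essential: it guarantees that patches centred at vertices $\ba\in\CV_H^-$ lie entirely in $\Om$, so that Poincar\'e returns a purely $\Om$-sided seminorm; elements of $\Om$ touching only $\partial\Omega$ inherit a Friedrichs-type bound from the $H^1_0(\Omega)$ boundary condition, while the interfacial contributions are absorbed into the overlap constant $(d+1)$ already in the bound. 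Assembling the constants then reproduces exactly $C_{\pm}(\UT_H)=C_{\pm}(\UT)+2(d+1)\hC_{\mathrm P}\hC_{\mathrm norm}\hC_{\mathrm inf}\hC_{\mathrm inv}C_{\mathrm u}\kappa\sqrt{2+C_{\pm}^0(\UT)^2}$.
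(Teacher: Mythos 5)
Your overall architecture is the paper's: the same splitting of $w-\UT_H w$ into $(w-\UT w)$ plus the correction sum, the $(d+1)$ overlap bound, the pairing of Lemmas \ref{lem:estimatema} and \ref{lem:defetaa} into a factor of order $C_{\mathrm u}^{1/2}/\rho$, and the patch Poincar\'e inequality of Lemma \ref{lem:poincare} to finish. The first assertion and the treatment of the $w-\UT w$ term are fine.

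The gap is in your ``decisive manipulation''. You rewrite $m^\ba(\UT w)=m^\ba(\UT w+w)$, and $\UT w+w$ equals $2(w-\US w)$ on $\Op$ (and $0$ on $\Om$); separating this into $\|w\|_{0,\oa\cap\Op}$ and $\|\US w\|_{0,\oa\cap\Op}$ leaves, after summation over $\ba\in\CV_H^0\cup\CV_H^+$, a genuine $H^{-1}\|w\|_{0,\Op}$ contribution. The $\US w$ piece does reduce to $\|w\|_{0,\Om}$ by the $L^2$-stability of $\US$, but the $\|w\|_{0,\Op}$ piece does not: applying Lemma \ref{lem:poincare} to it produces $|w|_{1,\oa}$ on patches meeting $\Op$, i.e.\ exactly the leak of $|w|_{1,\Op}$ into the right-hand side that \eqref{eq_TH_stab} must avoid (and that Theorem \ref{prop:T_coercivity} cannot tolerate, since its Young-inequality step pairs $|v|_{1,\Op}$ against $|v|_{1,\Om}$). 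Your closing paragraph asserts that the remainder is ``$\|w\|_{0,\Om}/H$'', but with your decomposition it is not. The repair is to exploit $m^\ba(w)=0$ with the opposite sign: write $m^\ba(\UT w)=-m^\ba(w-\UT w)$. Since $w-\UT w=2w$ on $\Om$ and $w-\UT w=2\US w$ on $\Op$, \emph{both} pieces are controlled by $\|w\|_{0,\Om}$ --- the first directly, the second via \eqref{eq:TL2stab} --- which yields the factor $\sqrt{2+C_\pm^0(\UT)^2}\,\|w\|_{0,\Om}$; only then does the Poincar\'e inequality on the patches $\oa$ with $\ba\in\CV_H^-$ (all contained in $\Om$ because the mesh resolves $\Gamma$, as you correctly note) give the desired $H|w|_{1,\Om}$. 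This is precisely the paper's argument.
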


	\begin{remark}
		We emphasize that the constant $C_\pm(\UT_H)$ is bounded independently of the
		mesh size $H$. Actually, it only depends on the original operator $\UT$, and the
		mesh shape-regularity parameter $\kappa$.
	\end{remark}

\begin{proof}
	Identity \eqref{eq_TH_minus} is a direct consequence from the fact that
	$\supp \eta^\ba \subset \Omega_+$ for all $\ba \in \CV_H^0 \cup \CV_H^+$.
	We thus focus on \eqref{eq_TH_stab}.
	Let $w \in W$. We have
	\begin{align*}
	w-\UT_H w
	&=
	w - \biggl(\UT w - \sum_{\ba \in \CV_H^0 \cup \CV_H^+} m^\ba(\UT w) \eta^\ba\biggr )
	\\*
	&=
	\biggl ( w - \UT w \biggr ) - \sum_{\ba \in \CV_H^0 \cup \CV_H^+} m^\ba(w-\UT w) \eta^\ba,
	\end{align*}
	so that
	\begin{equation*}
	|w - \UT_H w|_{1,\Op}
	\leq
	|w - \UT w|_{1,\Op}
	+
	\biggl |
	\sum_{\ba \in \CV_H^0 \cup \CV_H^+} m^\ba(w-\UT w) \eta^\ba
	\biggr |_{1,\Op}.
	\end{equation*}
	We have
	\begin{equation*}
	\biggl |
	\sum_{\ba \in \CV_H^0 \cup \CV_H^+} (w-\UT w,m^\ba) \eta^\ba
	\biggr |_{1,\Omega_+}^2
	\leq
	(d+1)
	\sum_{\ba \in \CV_H^0 \cup \CV_H^+} |m^\ba(w-\UT w)|^2|\eta^\ba|_{1,\Op}^2.
	\end{equation*}
	Then, for each $\ba \in \CV_H^0 \cup \CV_H^+$, it holds with Lemmas
	\ref{lem:estimatema} and \ref{lem:defetaa} that
	\begin{align*}
	|m^\ba(w-\UT w)||\eta^\ba|_{1,\Op}
	&\leq
	\left (
		\hC_{\mathrm inf}
		\left (\frac{1}{\min_{K \in \CT_H^\ba} |K|}\right )^{1/2}
		\|w-\UT w\|_{0,\oa}
		\right )
		\left (\hC_{\mathrm norm}\hC_{\mathrm inv} \max_{K \in \CT_H^\ba} \frac{|K|^{1/2}}{\rho_K}\right )
	\\
	&\leq
	\hC_{\mathrm norm} \hC_{\mathrm inf} \hC_{\mathrm inv}
	\left (\frac{\max_{K \in \CT_H^\ba} |K|}{\min_{K \in \CT_H^\ba} |K|}\right )^{1/2}
	\frac{1}{\rho} \|w-\UT w\|_{0,\oa}
	\\
	&\leq
	\frac{\hC_{\mathrm norm} \hC_{\mathrm inf} \hC_{\mathrm inv}C_{\mathrm u}^{1/2}}{\rho}
	\|w-\UT w\|_{0,\oa}.
	\end{align*}
	Furthermore, we have
	\begin{align*}
	\|w-\UT w\|_{0,\oa}^2
	&=
	\|w-\UT w\|_{0,\oa \cap \Om}^2
	+
	\|w-\UT w\|_{0,\oa \cap \Op}^2
	\\
	&=
	2\|w\|_{0,\oa \cap \Om}^2
	+
	\|w-\UT w\|_{0,\oa \cap \Op}^2.
	\end{align*}
	Therefore, we obtain by combining these two estimates
	\begin{align*}
	\biggl |
	\sum_{\ba \in \CV_H^0 \cup \CV_H^+} (w-\UT v,m^\ba) \eta^\ba
	\biggr |^2
	&\leq
	\Bigl (
	\frac{\hC_{\mathrm norm} \hC_{\mathrm inf} \hC_{\mathrm inv}C_{\mathrm u}^{1/2}}{\rho}
	\Bigr )^2 (d+1)^2
	\bigl (2 \|w\|_{0,\Om}^2 + \|w-\UT w\|_{0,\Op}^2\bigr )
	\\
	&\leq
	\Bigl (
	\frac{\hC_{\mathrm norm} \hC_{\mathrm inf} \hC_{\mathrm inv}C_{\mathrm u}^{1/2}}{\rho}
	\Bigr )^2 (d+1)^2
	\left (2 + C_\pm^0(\UT)^2\right )\|w\|_{0,\Om}^2.
	\end{align*}
	Moreover, we have by Lemma \ref{lem:poincare}
	\begin{align*}
	\|w\|_{0,\Om}^2
	&\leq
	\frac{1}{d+1} \sum_{\ba \in \CV_H^-} \|w\|_{0,\oa}^2
	\leq
	\frac{(\widehat C_{\mathrm{P}} H)^2}{d+1} \sum_{\ba \in \CV_H^-} |w|_{1,\oa}^2
	\leq
	\widehat C_{\mathrm{P}}^2 H^2 |w|^2_{1,\Om}.
	\end{align*}
	Hence, combining all the foregoing estimates, we finally deduce
	\begin{align*}
	\biggl |
	\sum_{\ba \in \CV_H^0 \cup \CV_H^+} (w-\UT w,m^\ba) \eta^\ba
	\biggr |
	&\leq
	(d+1)\hC_{\mathrm P}\hC_{\mathrm norm} \hC_{\mathrm inf} \hC_{\mathrm inv}C_{\mathrm u}^{1/2}
	\frac{H}{\rho}
	\sqrt{2 + C_\pm^0(\UT)^2}\,|w|_{1,\Om},
	\end{align*}
	and the result follows.
\end{proof}

	We can now conclude this section with Theorem \ref{theorem_inf_sup_kernel}
	establishing $\UT_H$-coercivity of $a(\cdot,\cdot)$ in the kernel $W$.
	The proof is direct consequence of Theorem \ref{prop:T_coercivity} and Lemma \ref{lemma_T_H}.

	\begin{theorem}
		\label{theorem_inf_sup_kernel}
		Under the assumption that
		\begin{equation}
		\label{eq:contrast}
		\sqrt{\contrast} > \frac{C_\pm(\UT_H)}{2}
		\left (\frac{\sup_\Op \sigma}{\inf_\Op \sigma}\right ),
		\end{equation}
		we have
		\begin{equation}\label{eq:WTcoercive}
		a(w,\UT_H w) \geq \alpha_\kappa |w|_{1,\sigma,\Omega}^2 \qquad \forall w \in W
		\end{equation}
		where
		\begin{equation*}
		\alpha_\kappa \eq 1 - \frac{C_\pm(\UT_H)}{2}
		\left (\frac{\sup_\Op \sigma}{\inf_\Op \sigma}\right )
		\sqrt{\frac{\Sm}{\Sp}}.
		\end{equation*}
	\end{theorem}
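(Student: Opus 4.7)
The plan is to realize this theorem as a direct corollary of Theorem~\ref{prop:T_coercivity}, applied with $V \eq W$ and $\UY \eq \UT_H$. The work to be done is essentially administrative: check that each hypothesis of Theorem~\ref{prop:T_coercivity} is available, then read off the conclusion and verify positivity of the resulting constant.

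First I would note that $W = \ker(I_H)$ is a closed subspace of $H^1_0(\Omega)$, being the kernel of a bounded linear operator (see \eqref{eq:IHstabapprox}); this is required by Theorem~\ref{prop:T_coercivity}. Next I would invoke the preceding lemma stating that $\UT_H \in \CL(W)$, so that the operator is well-defined on the space of interest. The two structural hypotheses \eqref{eq_T_minus} and \eqref{eq_T_stab} on $\UT_H$ are exactly the content of Lemma~\ref{lemma_T_H}: identity~\eqref{eq_TH_minus} gives \eqref{eq_T_minus}, and estimate~\eqref{eq_TH_stab} provides \eqref{eq_T_stab} with the constant $C_\pm(\UT_H)$ already computed there.

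With all hypotheses in hand, I would apply Theorem~\ref{prop:T_coercivity} to obtain, for every $w \in W$,
\[
a(w, \UT_H w) \geq \left(1 - \frac{C_\pm(\UT_H)}{2}\left(\frac{\sup_\Op \sigma}{\inf_\Op \sigma}\right)\sqrt{\frac{\Sp}{\Sm}}\right) |w|_{1,\sigma,\Omega}^2,
\]
which is \eqref{eq:WTcoercive} with the constant $\alpha_\kappa$ as stated in the theorem. The final step is to check that $\alpha_\kappa > 0$ under the contrast assumption~\eqref{eq:contrast}: rewriting $\sqrt{\contrast} = \sqrt{\Sm/\Sp}$, the hypothesis is equivalent to
\[
\frac{C_\pm(\UT_H)}{2}\left(\frac{\sup_\Op \sigma}{\inf_\Op \sigma}\right)\sqrt{\frac{\Sp}{\Sm}} < 1,
\]
so the parenthesized quantity is strictly positive.

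There is no genuine obstacle here since Lemma~\ref{lemma_T_H} has already absorbed the technical work (controlling the correction terms $m^\ba(\UT w)\eta^\ba$ via Lemmas~\ref{lem:estimatema}, \ref{lem:poincare}, \ref{lem:defetaa}, and turning the $L^2$-control of $w$ on $\Om$ into an $H^1$-control through the kernel Poincaré inequality). The only point to be careful about in the write-up is bookkeeping of constants, namely that the $C_\pm(\UT_H)$ output by Lemma~\ref{lemma_T_H} is exactly the quantity appearing in both the hypothesis~\eqref{eq:contrast} and the final expression for $\alpha_\kappa$, so that the contrast condition and the positivity statement are consistent.
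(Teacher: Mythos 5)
Your proposal is correct and follows exactly the paper's own route: the paper proves Theorem~\ref{theorem_inf_sup_kernel} precisely as a direct consequence of Theorem~\ref{prop:T_coercivity} applied with $V=W$ and $\UY=\UT_H$, with Lemma~\ref{lemma_T_H} (and the preceding lemma showing $\UT_H\in\CL(W)$) supplying the hypotheses. Note only that your derivation yields the factor $\sqrt{\Sp/\Sm}$ in $\alpha_\kappa$, which is the version consistent with Theorem~\ref{prop:T_coercivity} and with positivity under \eqref{eq:contrast}; the factor $\sqrt{\Sm/\Sp}$ printed in the statement of Theorem~\ref{theorem_inf_sup_kernel} is evidently a typo.
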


\section{Towards a practical method}
\label{sec:lodpractical}
In this section, we address the second and third challenge discussed in Section \ref{subsec:ideal}, namely the localization of the corrector computations and their discretization.
To avoid the proliferation of constants, we use the notation $a\lesssim b$ (resp. $a \gtrsim b$)
if $a\leq Cb$ (resp. $a \geq Cb$) with a constant $C$ that only depends on $\kappa$, $\alpha_\kappa$, $\sigma_+$, $\sigma_-$, and $\|\sigma\|_{L^\infty(\Omega)}$.
We also write $a\approx b$ when $a\lesssim b$ and $a\gtrsim b$.

\subsection{Localized correctors}
\label{subsec:local}
In this section, we will show how to localize the computation of the correctors defined in \eqref{eq:corrector}.
Note that due to linearity, $\CQ$ can be written as $\CQ=\sum_{K\in \CT_H}\CQ_K$, where $\CQ_K$ is defined via
\[a(\CQ_K v_H, w)=a_K(v_H, w)\qquad \text{for all} \quad w\in W.\]
Here and in the following, $a_D(\cdot, \cdot)$ denotes the restriction of $a(\cdot, \cdot)$ to a subdomain
	$D\subset \Omega$.

We emphasize that the present localization analysis requires a dedicated treatment,
due to the underlying usage of $\UT$-coercivity. Indeed, the arguments for general
inf-sup stable problems presented in \cite[Chapter~2]{Maier20} requires a ``locality assumption''
in the inf-sup condition. This locality assumption essentially requires that
for $w \in W$, there exists a function $w^\star \in W$ that realizes the inf-sup
condition such that $\|w^\star\|_{1,D} \lesssim \|w\|_{1,\widetilde D}$ for $D \subset \Omega$,
where $\widetilde D$ is a slightly ``oversampled'' version of $D$. In view of the nature of
the operator $\UT$, that involves a symmetrization around $\Gamma$, this assumption is fundamentally
violated here.

Recall the definition of the $m$th layer patch $\UN^m(D)$ around $D\subset \Omega$ from Section \ref{subsec:meshes}.
The shape regularity implies that there is a bound $C_{\operatorname{ol}, m}$ (depending only on $m$) of the number of the elements in the $m$-layer patch, i.e.,
\begin{equation}
\label{eq:Colm}
\max_{T\in \CT_H}\operatorname{card}\{K\in \CT_H\quad |\quad K\subset \UN^m(T)\}\leq C_{\operatorname{ol}, m}.
\end{equation}
We note that since $\CT_H$ is quasi-uniform, $C_{\operatorname{ol}, m}$ grows at most polynomially
with $m$.

As stated above, we need to modify the usual proof because $\UT_H$ involves a symmetrization
operator and thus, is inherently non-local. This is why we introduce the following ``symmetric''
patches $\UP^m(K):=(\UP^m(K)\cap \Omega_-)\cup (\UP^m(K)\cap \Omega_+)$ by
\[\UP^m(K) \cap \Omega_- := \UN^m(K) \cap \Omega_-,\]
and
\[\UP^m(K) \cap \Omega_+ :=\{K^\prime\in \CT_H\quad|\quad K^\prime\cap\supp(\UT v)\neq \emptyset \quad\text{ for all }\quad v\in H^1_0(\UN^m(K))\} \cap \Omega_+.\]
We emphasize that this does not require the mesh $\CT_H$ to be symmetric.
In view of \eqref{eq:suppTH}, the idea of $\UP^m(K)$ is that, for any function $v\in H^1_0(\Omega)$ with
$\supp v\subset \UP^m(K)$ we now have $\supp \UT_H v\subset \UP^m(K)$ as well.
Some examples of $\UP^1(K)$ for an interface-resolving, but non-symmetric mesh are illustrated in Figure \ref{fig:patch}.

We now have an exponential decay of $\CQ_K$ outside those symmetric patches, as stated in
the following proposition, whose proof is postponed to Section \ref{subsec:decay}.

\begin{proposition}\label{prop:expdecay}
	There is $0<\tilde{\gamma}<1$, independent of $H$, such that for any $K\in \CT_H$ and all $v_H\in V_H$
	\[\|\CQ_K v_H\|_{1,\Omega\setminus\UP^m(K)}\lesssim \tilde{\gamma}^m\|v_H\|_{1,K}.\]
\end{proposition}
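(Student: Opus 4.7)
The proof will follow the Peterseim-type cutoff/energy argument that is standard for LOD decay, with two modifications tailored to our setting: we test with $\UT_H$-images to exploit the kernel-level $\UT_H$-coercivity of Theorem~\ref{theorem_inf_sup_kernel}, and we use the symmetric patches $\UP^m(K)$ in place of the standard $\UN^m(K)$. The raison d'\^etre of $\UP^m(K)$ is that, by its very definition, a function $z$ supported in $\Omega\setminus\UP^m(K)$ has its $\UT$-image (and, via \eqref{eq:suppTH}, its $\UT_H$-image) supported outside a set containing $K$; a standard patch $\UN^m(K)$ would not suffice, because the non-local symmetrization $\US$ appearing in $\UT$ could re-introduce support across $\Gamma$ near $K$.

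Fix $K\in\CT_H$ and $v_H\in V_H$, and write $w:=\CQ_K v_H\in W$. For $m$ sufficiently large, pick a piecewise-linear nodal cutoff $\eta\in V_H$ with $0\le\eta\le 1$, $\eta\equiv 0$ on an inner patch $\UP^{m-c_1}(K)$ and $\eta\equiv 1$ on $\Omega\setminus\UP^{m-c_2}(K)$ (with integers $c_1>c_2\geq 1$ chosen to absorb the one-layer expansion of $I_H$ and the support relation \eqref{eq:suppTH}), satisfying $\|\grad\eta\|_{L^\infty}\lesssim H^{-1}$. Set $\tilde w:=(1-I_H)(\eta w)\in W$. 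By the locality of $I_H$ and the choice of $\eta$, $\tilde w$ is supported in $\Omega\setminus\UP^{m-c_1-1}(K)$, and so by the discussion above $\UT_H\tilde w$ vanishes on $K$. Plugging $\UT_H\tilde w\in W$ into the defining equation \eqref{eq:corrector} of $\CQ_K$ then produces the crucial cancellation
\[a(w,\UT_H\tilde w)=a_K(v_H,\UT_H\tilde w)=0.\]
Applying Theorem~\ref{theorem_inf_sup_kernel} to $\tilde w$ and using $\tilde w=\eta w-I_H(\eta w)$ together with this cancellation gives
\[\alpha_\kappa|\tilde w|_{1,\sigma,\Omega}^2\leq a(\tilde w,\UT_H\tilde w)=-a((1-\eta)w,\UT_H\tilde w)-a(I_H(\eta w),\UT_H\tilde w).\]
Both right-hand terms are localized in the transition shell $S_m:=\UP^{m-c_2}(K)\setminus\UP^{m-c_1-1}(K)$; bounding them by Cauchy--Schwarz, together with $\|\grad\eta\|_{L^\infty}\lesssim H^{-1}$, the stability estimate \eqref{eq:IHstabapprox}, and the $H$-independent operator bound on $\UT_H$ from Lemma~\ref{lemma_T_H}, yields $|\tilde w|_{1,\sigma,\Omega}^2\lesssim|w|_{1,\sigma,S_m}^2$. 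Since $\eta\equiv 1$ outside $\UP^{m-c_2}(K)$, we also have $|w|_{1,\sigma,\Omega\setminus\UP^{m-c_2}(K)}\leq|\tilde w|_{1,\sigma,\Omega}$, and writing $S_m$ as a difference of two outer regions yields a geometric recursion $|w|_{1,\sigma,\Omega\setminus\UP^{m-c_2}(K)}^2\leq\tilde\gamma^2\,|w|_{1,\sigma,\Omega\setminus\UP^{m-c_1-1}(K)}^2$ with an explicit $\tilde\gamma<1$. Iterating in $m$ gives the claimed exponential decay, and the base estimate $|w|_{1,\sigma,\Omega}\lesssim|v_H|_{1,K}$ follows by applying $\UT_H$-coercivity to $w$ itself: $\alpha_\kappa|w|_{1,\sigma,\Omega}^2\leq a(w,\UT_H w)=a_K(v_H,\UT_H w)$.

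The main obstacle is securing the cancellation $a(w,\UT_H\tilde w)=0$: a naive cutoff with respect to $\UN^m(K)$ fails because $\UT_H$ inherits from $\US$ a non-local character that can re-inject support near $K$. The symmetric patch $\UP^m(K)$ is exactly the enlargement required so that, once $\tilde w$ is buried deep enough inside $\Omega\setminus\UP^m(K)$, the operator $\UT_H$ transports it, support-wise, to a region disjoint from $K$. Controlling the layer-gap $c_1-c_2$ so that both the $I_H$-expansion and the $\UT_H$-expansion fit inside the transition shell is the essential technical bookkeeping of the proof.
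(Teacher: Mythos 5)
Your proposal follows essentially the same route as the paper's proof: the same cutoff construction $\tilde w=(\operatorname{id}-I_H)(\eta\,\CQ_K v_H)$ relative to the symmetric patches $\UP^m(K)$, the same cancellation $a(\CQ_Kv_H,\UT_H\tilde w)=a_K(v_H,\UT_H\tilde w)=0$ exploiting that $\UT_H\tilde w\in W$ vanishes on $K$, the same Caccioppoli-type bound on the transition shell via $\UT_H$-coercivity from Theorem \ref{theorem_inf_sup_kernel}, and the same geometric iteration. The only differences are cosmetic (unspecified layer constants $c_1,c_2$ versus the paper's explicit $m-4$, $m-3$ and seven-layer recursion, and the explicit inclusion of the base stability estimate), so the argument is correct as proposed.
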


In order to localize the corrector problems, we introduce the space
\[W(\UP^m(K)):=\{w\in W\quad |\quad w=0\quad\text{in}\quad \Omega\setminus \UP^m(K)\}\]
and define for any $v_H\in V_H$ the localized element corrector $\CQ_{K,m}v_H\in W(\UP^m(K))$
as the solution of
\begin{equation}\label{eq:correctorlocal}
a_{\UP^m(K)}(\CQ_{K,m}v_H, w)=a_K(v_H, w)\qquad \text{for all}\quad w\in W(\UP^m(K)).
\end{equation}
Due to $\UT_H\in \CL(W)$ and the definition of $\UP^m(K)$, these
localized corrector problems are well-posed because the $\UT_H$-coercivity of
$a(\cdot,\cdot)$ thereby carries over from $W$ to $W(\UP^m(K))$.

We emphasize that, if $\UN^m(K)\cap \Gamma=\emptyset$, $\UP^m(K)$ consists of two
disconnected domains and $\CQ_{K,m}v_H$ is even zero outside the standard patch
$\UN^m(K)$ because of the localized right-hand side in \eqref{eq:correctorlocal}. 
Hence, we can solve \eqref{eq:correctorlocal} on $\UN^m(K)$ (as in the usual LOD)
in the case $\UN^m(K)\cap \Gamma=\emptyset$, resulting in the standard localized element
corrector problems. In other words, we only need to define new and larger patches for $Q_{K,m}$
for elements $K$ close to the interface $\Gamma$. The truncated correction operator $\CQ_m$
is now defined as the sum of these element correctors, i.e., $\CQ_m:=\sum_{K\in \CT_H}\CQ_{K,m}$.

\begin{figure}
	\begin{subfigure}{0.3\textwidth}
		\centering
		\begin{tikzpicture}[scale=0.7]
		\path[fill=lightgray] (0,2)--(3,2)--(3,0)--(0,0)--cycle;
		\path[fill=gray] (0,4)--(2,4)--(3, 5)--(3,6)--(1,6)--(0,5)--cycle;
		\path[fill=black] (1, 5)--(2,5)--(2,6)--cycle;
		\draw[thick, dashed, blue] (0,2)--(2,2)--(3,1)--(3,0)--(1,0)--(0,1)--cycle;
		\draw(0,0) rectangle (4,6);
		\draw(1,0)--(1,6);
		\draw(2,0)--(2,6);
		\draw(3,0)--(3,6);
		\draw(0,1)--(4,1);
		\draw(0,2)--(4,2);
		\draw[very thick, red](0,3)--(4,3);
		\draw(0,4)--(4,4);
		\draw (0,5)--(4,5);
		\draw (0,0)--(4,4);
		\draw (0,1)--(4,5);
		\draw(0,2)--(4,6);
		\draw(0,3)--(3,6);
		\draw (0,4)--(2,6);
		\draw(0,5)--(1,6);
		\draw(1,0)--(4,3);
		\draw(2,0)--(4,2);
		\draw(3,0)--(4,1);
		\end{tikzpicture}
	\end{subfigure}
	\begin{subfigure}{0.3\textwidth}
		\centering
		\begin{tikzpicture}[scale=0.7]
		\path[fill=gray] (0,0)--(0,2)--(1, 3)--(3,3)--(3,2)--(1,0)--cycle;
		\path[fill=black] (1, 1)--(1,2)--(2,2)--cycle;
		\draw(0,0) rectangle (4,6);
		\draw(1,0)--(1,6);
		\draw(2,0)--(2,6);
		\draw(3,0)--(3,6);
		\draw(0,1)--(4,1);
		\draw(0,2)--(4,2);
		\draw[very thick, red](0,3)--(4,3);
		\draw(0,4)--(4,4);
		\draw (0,5)--(4,5);
		\draw (0,0)--(4,4);
		\draw (0,1)--(4,5);
		\draw(0,2)--(4,6);
		\draw(0,3)--(3,6);
		\draw (0,4)--(2,6);
		\draw(0,5)--(1,6);
		\draw(1,0)--(4,3);
		\draw(2,0)--(4,2);
		\draw(3,0)--(4,1);
		\end{tikzpicture}
	\end{subfigure}
	\begin{subfigure}{0.3\textwidth}
		\centering
		\begin{tikzpicture}[scale=0.7]
		\path[fill=lightgray] (0,3)--(3,3)--(3,0)--(1,0)--(1,1)--(0,1)--cycle;
		\path[fill=gray] (0,3)--(2,3)--(3, 4)--(3,6)--(2,6)--(0,4)--cycle;
		\path[fill=black] (1, 4)--(2,4)--(2,5)--cycle;
		\draw[thick, dashed, blue] (0,3)--(2,3)--(3,2)--(3,0)--(2,0)--(0,2)--cycle;
		\draw(0,0) rectangle (4,6);
		\draw(1,0)--(1,6);
		\draw(2,0)--(2,6);
		\draw(3,0)--(3,6);
		\draw(0,1)--(4,1);
		\draw(0,2)--(4,2);
		\draw[very thick, red](0,3)--(4,3);
		\draw(0,4)--(4,4);
		\draw (0,5)--(4,5);
		\draw (0,0)--(4,4);
		\draw (0,1)--(4,5);
		\draw(0,2)--(4,6);
		\draw(0,3)--(3,6);
		\draw (0,4)--(2,6);
		\draw(0,5)--(1,6);
		\draw(1,0)--(4,3);
		\draw(2,0)--(4,2);
		\draw(3,0)--(4,1);
		\end{tikzpicture}
	\end{subfigure}
	
	\vspace{2ex}
	\begin{subfigure}{0.3\textwidth}
		\centering
		\begin{tikzpicture}[scale=0.7]
		\path[fill=gray] (0,2)--(2,4)--(3, 4)--(3,2)--(2,1)--(0,1)--cycle;
		\path[fill=black] (1, 2)--(2,2)--(2,3)--cycle;
		\draw (0,0) rectangle (4,6);
		\draw(1,0)--(1,6);
		\draw(2,0)--(2,6);
		\draw(3,0)--(3,6);
		\draw(0,1)--(4,1);
		\draw(0,2)--(4,2);
		\draw[very thick, red](0,3)--(4,3);
		\draw(0,4)--(4,4);
		\draw (0,5)--(4,5);
		\draw (0,0)--(4,4);
		\draw (0,1)--(4,5);
		\draw(0,2)--(4,6);
		\draw(0,3)--(3,6);
		\draw (0,4)--(2,6);
		\draw(0,5)--(1,6);
		\draw(1,0)--(4,3);
		\draw(2,0)--(4,2);
		\draw(3,0)--(4,1);
		\end{tikzpicture}
	\end{subfigure}
	\begin{subfigure}{0.3\textwidth}
		\centering
		\begin{tikzpicture}[scale=0.7]
		\path[fill=lightgray] (2,2)--(3,3)--(3,2)--cycle;
		\path[fill=gray] (0,3)--(1,4)--(3, 4)--(3,3)--(1,1)--(0,1)--cycle;
		\path[fill=black] (1, 2)--(1,3)--(2,3)--cycle;
		\draw(0,0) rectangle (4,6);
		\draw(1,0)--(1,6);
		\draw(2,0)--(2,6);
		\draw(3,0)--(3,6);
		\draw(0,1)--(4,1);
		\draw(0,2)--(4,2);
		\draw[very thick, red](0,3)--(4,3);
		\draw(0,4)--(4,4);
		\draw (0,5)--(4,5);
		\draw (0,0)--(4,4);
		\draw (0,1)--(4,5);
		\draw(0,2)--(4,6);
		\draw(0,3)--(3,6);
		\draw (0,4)--(2,6);
		\draw(0,5)--(1,6);
		\draw(1,0)--(4,3);
		\draw(2,0)--(4,2);
		\draw(3,0)--(4,1);
		\end{tikzpicture}
	\end{subfigure}
	\begin{subfigure}{0.3\textwidth}
		\centering
		\begin{tikzpicture}[scale=0.7]
		\path[fill=lightgray] (1,2)--(1,1)--(3,1)--(3,3)--(2,2)--cycle;
		\path[fill=gray] (0,2)--(2,2)--(3, 3)--(3,5)--(2,5)--(0,3)--cycle;
		\path[fill=black] (1, 3)--(2,3)--(2,4)--cycle;
		\draw(0,0) rectangle (4,6);
		\draw(1,0)--(1,6);
		\draw(2,0)--(2,6);
		\draw(3,0)--(3,6);
		\draw(0,1)--(4,1);
		\draw(0,2)--(4,2);
		\draw[very thick, red](0,3)--(4,3);
		\draw(0,4)--(4,4);
		\draw (0,5)--(4,5);
		\draw (0,0)--(4,4);
		\draw (0,1)--(4,5);
		\draw(0,2)--(4,6);
		\draw(0,3)--(3,6);
		\draw (0,4)--(2,6);
		\draw(0,5)--(1,6);
		\draw(1,0)--(4,3);
		\draw(2,0)--(4,2);
		\draw(3,0)--(4,1);
		\end{tikzpicture}
	\end{subfigure}
	\caption{Illustration of $\UP^1(K)$ for different triangles $K$. The red line is the interface $\Gamma$, $\Omega_-$ is the upper half and $\Omega_+$ the lower half. Triangle $K$ in black, $\UN^1(K)$ consists of $K$ and additional elements in gray, $\UP^1(K)$ consists of $\UN^1(K)$ and additional elements in light gray. In the top line, dashed blue lines indicate the area of $\UN^1(K)$ under symmetrization.}
	\label{fig:patch}
\end{figure}
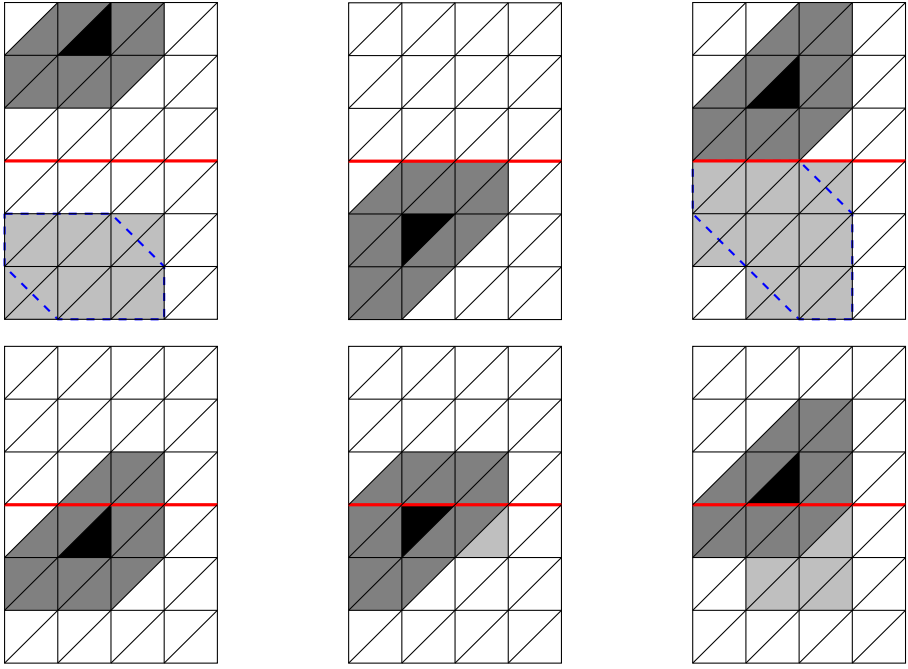

Due to the exponential decay of the idealized correctors, we have the following estimate of the truncation or localization error, which again is proved in Section \ref{subsec:decay}.
\begin{theorem}\label{thm:localization}
	There exists $0<\gamma <1$, independent of $H$, such that for any $v_H\in V_H$
	\[\|(\CQ-\CQ_m)v_H\|_{1, \Omega}\lesssim C_{\mathrm{ol}, m}^{1/2}\, \gamma^m \|v_H\|_{1, \Omega}.\]
\end{theorem}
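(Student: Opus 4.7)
The plan is to first establish a decay estimate for the individual element corrector differences, and then combine these via a finite-overlap argument. By linearity of the correctors, decompose
\[(\CQ - \CQ_m) v_H = \sum_{K \in \CT_H} e_K, \qquad e_K \eq (\CQ_K - \CQ_{K,m}) v_H \in W.\]
First I would aim to prove the element-wise estimate
\[\|e_K\|_{1, \Omega} \lesssim \tilde{\gamma}^m \|v_H\|_{1, K} \qquad \forall K \in \CT_H,\]
with the same rate $\tilde\gamma \in (0,1)$ as in Proposition \ref{prop:expdecay}, and then sum.

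For the individual estimate, the plan is to apply the $\UT_H$-coercivity on $W$ from Theorem \ref{theorem_inf_sup_kernel}:
\[\alpha_\kappa |e_K|_{1,\sigma,\Omega}^2 \leq a(e_K, \UT_H e_K).\]
The definitions of $\CQ_K$ and $\CQ_{K, m}$ immediately yield the Galerkin-type orthogonality $a(e_K, w) = 0$ for every $w \in W(\UP^m(K))$. Next, I would introduce a piecewise affine cutoff $\chi$ with $\chi \equiv 1$ outside $\UP^m(K)$, $\chi \equiv 0$ on $\UP^{m-1}(K)$, and $|\nabla \chi| \lesssim H^{-1}$, and define
\[w_K \eq (\operatorname{id} - I_H)\bigl[(1 - \chi)\, \UT_H e_K\bigr].\]
The support property \eqref{eq:suppTH} of $\UT_H$ together with the construction of $\UP^m(K)$ ensures that $(1-\chi)\, \UT_H e_K$ is supported in $\UP^m(K)$, and hence, modulo a layer absorbed in the definition of the patch, $w_K \in W(\UP^m(K))$, so that $a(e_K, w_K) = 0$. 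This converts the coercivity estimate into
\[\alpha_\kappa |e_K|_{1,\sigma,\Omega}^2 \leq a(e_K, \UT_H e_K - w_K),\]
whose right-hand side is controlled by the tail norms of $e_K$ and $\UT_H e_K$ on $\Omega \setminus \UP^{m-1}(K)$, together with a commutator contribution between $\UT_H$ and $\chi$ of size $H^{-1}$ times an $L^2$-tail. The decay result of Proposition \ref{prop:expdecay} applied to $\CQ_K v_H$ (noting that $\CQ_{K, m} v_H$ vanishes outside $\UP^m(K)$), combined with the $L^2$-stability \eqref{eq:TL2stab} of $\UT$ and the quasi-interpolation bound \eqref{eq:IHstabapprox} for $I_H$, then absorb these tails and yield the claimed element-wise decay.

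Finally, the global bound follows by Cauchy--Schwarz combined with the finite-overlap property \eqref{eq:Colm}: since each $K \in \CT_H$ belongs to at most $C_{\mathrm{ol}, m}$ patches $\UP^m(K')$, one would arrive at
\[\|(\CQ - \CQ_m) v_H\|_{1,\Omega}^2 \lesssim C_{\mathrm{ol}, m} \sum_{K \in \CT_H} \|e_K\|_{1, \Omega}^2 \lesssim C_{\mathrm{ol}, m}\, \tilde\gamma^{2m} \sum_{K \in \CT_H} \|v_H\|_{1, K}^2 \lesssim C_{\mathrm{ol}, m}\, \tilde\gamma^{2m}\, \|v_H\|_{1, \Omega}^2,\]
and the result follows with $\gamma \eq \tilde\gamma$.

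The main obstacle is the non-locality of $\UT_H$: multiplication of $\UT_H e_K$ by a cutoff could, in principle, produce a function whose $\UT_H$-image interacts badly with the symmetrization operator $\US$ hidden inside $\UT_H$. The whole point of introducing the \emph{symmetric} patches $\UP^m(K)$, rather than the usual $\UN^m(K)$, is to align the support properties \eqref{eq:suppTH} with those of the cutoff, so that $w_K$ genuinely lies in $W(\UP^m(K))$ and the Galerkin orthogonality is available. This is precisely the reason why the modified patches also appear in the statement of Proposition \ref{prop:expdecay}; any attempt to localize using the standard element patches $\UN^m(K)$ would lose the orthogonality and render the cutoff argument ineffective.
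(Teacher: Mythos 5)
Your overall architecture (an element-wise decay estimate for $(\CQ_K-\CQ_{K,m})v_H$, then a global combination producing a $C_{\mathrm{ol},m}^{1/2}$ overlap factor) matches the paper's, and your discussion of why the symmetric patches $\UP^m(K)$ are indispensable is exactly right. However, both of your steps contain a genuine gap. For the element-wise estimate, you test the coercivity bound $\alpha_\kappa|e_K|^2_{1,\sigma,\Omega}\le a(e_K,\UT_H e_K)$ against $\UT_H e_K-w_K$ and claim the result is controlled by ``tail norms of $e_K$ on $\Omega\setminus\UP^{m-1}(K)$'', to be absorbed via Proposition \ref{prop:expdecay}. But $e_K=\CQ_Kv_H-\CQ_{K,m}v_H$, and on the annulus $\UP^m(K)\setminus\UP^{m-1}(K)$ (where $\grad\chi$ lives and where $\chi\neq 1$) the truncated corrector $\CQ_{K,m}v_H$ does \emph{not} vanish; Proposition \ref{prop:expdecay} only controls the ideal corrector $\CQ_Kv_H$, you have established no decay for $\CQ_{K,m}v_H$ near $\partial\UP^m(K)$, and $\|e_K\|_{1,\mathrm{annulus}}$ cannot be absorbed into the left-hand side without an unavailable smallness. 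The paper sidesteps this entirely: it observes that $\CQ_{K,m}v_H$ is the Galerkin projection of $\CQ_Kv_H$ onto $W(\UP^m(K))$, invokes the resulting quasi-optimality (supplied by $\UT_H$-coercivity on $W(\UP^m(K))$), and then cuts off the \emph{ideal} corrector, choosing $w_{K,m}=(\operatorname{id}-I_H)(\eta\,\CQ_Kv_H)$, so that only $\|\CQ_Kv_H\|_{1,\Omega\setminus\UP^m(K)}$ --- precisely the quantity in Proposition \ref{prop:expdecay} --- ever appears.

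The second gap is your final display: the inequality $\|\sum_Ke_K\|^2_{1,\Omega}\lesssim C_{\mathrm{ol},m}\sum_K\|e_K\|^2_{1,\Omega}$ is a finite-overlap bound that is valid only when each $e_K$ is supported in a patch of bounded overlap. Here the $e_K$ are globally supported --- $\CQ_Kv_H$ merely decays --- so every summand overlaps every other and the pointwise Cauchy--Schwarz argument does not apply. The paper instead applies $\UT_H$-coercivity to $w=\sum_Ke_K$, splits $\UT_Hw$ with a cutoff $\eta_K$ for each element, uses the orthogonality $a(e_K,\cdot)=0$ on $W(\UP^m(K))$ to annihilate the part of the test function supported inside the patch, and bounds the remainder by $\|e_K\|_{1,\Omega}\|w\|_{1,\UP^{m+4}(K)}$; only at that point does the finite overlap of the patches $\UP^{m+4}(K)$ legitimately produce the $C_{\mathrm{ol},m}^{1/2}$ factor. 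Both repairs reuse machinery you already have in hand, but as written neither step closes.
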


In our generalized finite element method, we now replace $\CQ$ in \eqref{eq:lodideal} by $\CQ_m$,
exactly in the spirit of LOD. Hence, we seek $u_{H,m}\in V_H$ such that
\begin{equation}\label{eq:lod}
a((\operatorname{id}-\CQ_m)u_{H,m}, (\operatorname{id}-\CQ_m)v_H)=(f, (\operatorname{id}-\CQ_m)v_H)_\Omega\qquad \text{for all}\quad v_H\in V_H.
\end{equation} 
The numerical analysis relies on the error estimate for the ideal method in Proposition \ref{prop:lodideal} and the fact that the localization is a small perturbation thereof.

\begin{theorem}\label{thm:lod}
	Let $m\gtrsim |\log (C_{\operatorname{ol}, m}^{1/2} \tilde \alpha)|$ with the inf-sup constant $\tilde \alpha$ of Proposition \ref{prop:lodideal}.
	Then 
	\eqref{eq:lod} is well-defined and the unique solution $u_{H,m}$ satisfies the error estimates
	\begin{align}
	\|u-(\operatorname{id}-\CQ_m)u_{H,m}\|_{1, \Omega}&\lesssim (H+C_{\operatorname{ol}, m}^{1/2}\,\gamma^m)\|f\|_{0, \Omega}, \label{eq:errorLOD}\\*
	\|u-u_{H,m}\|_{0, \Omega}&\lesssim H\inf_{v_H\in V_H} |u-v_H|_{1, \Omega}+C_{\operatorname{ol},m}^{1/2}\,\gamma^m (H+C_{\operatorname{ol}, m}^{1/2}\gamma^m)\|f\|_{0, \Omega}. \label{eq:errormacroLOD}
	\end{align}
\end{theorem}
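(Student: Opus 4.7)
My plan is to view the practical method \eqref{eq:lod} as a small perturbation of the ideal method, with the perturbation size quantified by Theorem \ref{thm:localization}, and to combine this with Proposition \ref{prop:lodideal}. The sign-changing aspect is already absorbed into the construction of $\UT_H$ (Section~\ref{sec:intpol}) and the symmetric patches $\UP^m(K)$, so the remaining arguments essentially follow the standard LOD template, cf.~\cite{Maier20,Pet15LODreview}.

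For well-posedness, writing $a_m(v_H,\psi_H):=a((\operatorname{id}-\CQ_m)v_H,(\operatorname{id}-\CQ_m)\psi_H)$ and $\tilde a(v_H,\psi_H):=a((\operatorname{id}-\CQ)v_H,(\operatorname{id}-\CQ)\psi_H)$, the identity
\[
(a_m-\tilde a)(v_H,\psi_H)
=
a((\CQ-\CQ_m)v_H,(\operatorname{id}-\CQ_m)\psi_H)
+
a((\operatorname{id}-\CQ)v_H,(\CQ-\CQ_m)\psi_H)
\]
together with Theorem \ref{thm:localization} bounds the perturbation by $C\,C_{\operatorname{ol},m}^{1/2}\gamma^m|v_H|_{1,\sigma,\Omega}|\psi_H|_{1,\sigma,\Omega}$. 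The hypothesis on $m$ keeps this below $\tilde\alpha/2$, so the ideal inf-sup \eqref{eq:infsupideal} transfers to $a_m$ with constant at least $\tilde\alpha/2$. This gives unique solvability of \eqref{eq:lod} and the a priori bound $|u_{H,m}|_{1,\sigma,\Omega}\lesssim\|f\|_{0,\Omega}$.

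For the $H^1$-estimate \eqref{eq:errorLOD}, I would split
\[
u-(\operatorname{id}-\CQ_m)u_{H,m}=[u-(\operatorname{id}-\CQ)u_H] + (\operatorname{id}-\CQ)(u_H-u_{H,m}) + (\CQ-\CQ_m)u_{H,m}.
\]
The first summand is bounded by $H\|f\|_{0,\Omega}$ via Proposition \ref{prop:lodideal}; the third by $C_{\operatorname{ol},m}^{1/2}\gamma^m\|f\|_{0,\Omega}$ via Theorem \ref{thm:localization} and the stability $|u_{H,m}|_{1,\sigma,\Omega}\lesssim\|f\|_{0,\Omega}$. For the middle, the inf-sup of $a_m$ applied to
\[
a_m(u_H-u_{H,m},\psi_H) = (f,(\CQ-\CQ_m)\psi_H)_\Omega + (\tilde a-a_m)(u_H,\psi_H)
\]
yields the rate $C_{\operatorname{ol},m}^{1/2}\gamma^m\|f\|_{0,\Omega}$; in the first numerator term an extra $H$-factor is gained using the Poincar\'e-type inequality $\|w\|_{0,\Omega}\lesssim H|w|_{1,\Omega}$ for $w\in W$ (immediate from $I_Hw=0$ and \eqref{eq:IHstabapprox}) applied to $(\CQ-\CQ_m)\psi_H\in W$.

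For the $L^2$-estimate \eqref{eq:errormacroLOD}, the identity $I_Hu=u_H$ recalled in Section~\ref{subsec:ideal} allows the decomposition $u-u_{H,m}=(u-I_Hu)+(u_H-u_{H,m})$. The first term contributes $H\inf_{v_H\in V_H}|u-v_H|_{1,\Omega}$ directly from \eqref{eq:IHstabapprox} applied to $(\operatorname{id}-I_H)(u-v_H)$. For the second, writing $u_H-u_{H,m}=I_H[u-(\operatorname{id}-\CQ_m)u_{H,m}]$, the $L^2$-stability of $I_H$ reduces matters to bounding $\|u-(\operatorname{id}-\CQ_m)u_{H,m}\|_{0,\Omega}$ and $H$ times its $H^1$-norm. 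An Aubin--Nitsche duality argument, using the symmetry of $a$ so that the dual problem coincides with the primal and Section~\ref{sec:intpol} applies verbatim, combined with two applications of \eqref{eq:errorLOD}, yields $\|u-(\operatorname{id}-\CQ_m)u_{H,m}\|_{0,\Omega}\lesssim(H+C_{\operatorname{ol},m}^{1/2}\gamma^m)^2\|f\|_{0,\Omega}$. The resulting $H^2\|f\|_{0,\Omega}$ contribution is absorbed into $H\inf_{v_H}|u-v_H|_{1,\Omega}$ using $|u|_{1,\Omega}\lesssim\|f\|_{0,\Omega}$. The main non-routine step is the Aubin--Nitsche argument, which requires the dual LOD to inherit $\UT_H$-coercivity; this is free here because both the symmetry of $a$ and the purely algebraic definition of $\UT_H$ are preserved under taking adjoints.
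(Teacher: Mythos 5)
Your handling of the well-posedness and of the $H^1$-estimate \eqref{eq:errorLOD} is correct and follows the same (largely standard) route the paper defers to its references: the perturbation identity for $a_m-\tilde a$ combined with Theorem \ref{thm:localization} transfers the ideal inf-sup \eqref{eq:infsupideal} to the localized forms under the stated oversampling condition, and your three-term splitting of $u-(\operatorname{id}-\CQ_m)u_{H,m}$ delivers \eqref{eq:errorLOD}. The $L^2$-estimate is where you genuinely diverge, and there your argument has a gap at the final step. Reducing $\|u_H-u_{H,m}\|_{0,\Omega}$ via $u_H-u_{H,m}=I_H\bigl[u-(\operatorname{id}-\CQ_m)u_{H,m}\bigr]$ and the stability of $I_H$ to $\|u-(\operatorname{id}-\CQ_m)u_{H,m}\|_{0,\Omega}+H\|u-(\operatorname{id}-\CQ_m)u_{H,m}\|_{1,\Omega}$ inevitably produces an $H^2\|f\|_{0,\Omega}$ contribution from both terms. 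Your claim that this can be absorbed into $H\inf_{v_H\in V_H}|u-v_H|_{1,\Omega}$ ``using $|u|_{1,\Omega}\lesssim\|f\|_{0,\Omega}$'' is backwards: that inequality bounds the best-approximation error from \emph{above} by $\|f\|_{0,\Omega}$, whereas the absorption would require the lower bound $\inf_{v_H}|u-v_H|_{1,\Omega}\gtrsim H\|f\|_{0,\Omega}$, which is a saturation-type assumption and not available unconditionally. What you actually prove is \eqref{eq:errormacroLOD} with an extra $+\,H^2\|f\|_{0,\Omega}$ on the right-hand side; this is harmless under the balancing choice $m\approx|\log(C_{\operatorname{ol},m}^{1/2}H)|$ and for solutions of limited regularity, but it is not the stated estimate, which is precisely designed so that the coarse part tracks the $L^2$-best approximation.

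The paper avoids the duality argument for the coarse part altogether, and its chain is the fix you need. It bounds $\|I_Hu-u_{H,m}\|_{0,\Omega}\leq\|I_Hu-u_{H,m}\|_{1,\Omega}$, writes $I_Hu-u_{H,m}=I_H(\operatorname{id}-\CQ)(I_Hu-u_{H,m})$ and uses the $H^1$-stability of $I_H$, then applies the ideal inf-sup \eqref{eq:infsupideal} to $(\operatorname{id}-\CQ)(I_Hu-u_{H,m})$. Since $(\operatorname{id}-\CQ)I_Hu=(\operatorname{id}-\CQ)u$, since $W$ is $a$-orthogonal to $\widetilde{V}_H$, and by the Galerkin orthogonality of \eqref{eq:lod}, the numerator collapses to $a\bigl(u-(\operatorname{id}-\CQ_m)u_{H,m},(\CQ_m-\CQ)\psi_H\bigr)$, and Theorem \ref{thm:localization} then supplies a multiplicative factor $C_{\operatorname{ol},m}^{1/2}\gamma^m$ in front of the already-established $H^1$-error \eqref{eq:errorLOD}. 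This yields exactly $C_{\operatorname{ol},m}^{1/2}\gamma^m(H+C_{\operatorname{ol},m}^{1/2}\gamma^m)\|f\|_{0,\Omega}$ for the coarse discrepancy, with no $H^2\|f\|_{0,\Omega}$ term, and the duality argument is only needed for the (separate, unstated in the theorem) $L^2$-bound on the full LOD solution.
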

Note that the oversampling condition $m\gtrsim |\log (C_{\operatorname{ol}, m}^{1/2} \tilde\alpha)| $ is independent of $H$.
Since $C_{\operatorname{ol}, m}$ grows only polynomially in $m$, it is fulfillable.
We emphasize that, in order to balance the terms $H$ and $\gamma^m$ in the error estimates, the stronger, but standard, oversampling condition $m\approx |\log(C_{\operatorname{ol}, m}^{1/2} H)|$ is required.
We summarize that under this (standard) oversampling condition, the method is well-posed, we have linear convergence in the $H^1(\Omega)$-norm (see \eqref{eq:errorLOD}) and up to quadratic convergence of the FE part in the $L^2(\Omega)$-norm (see \eqref{eq:errormacroLOD}).
Note that the second term in \eqref{eq:errormacroLOD} is of order $H^2$ for $m\approx |\log(C_{\operatorname{ol}, m}^{1/2} H)|$.
The exact convergence rate for the FE part depends on the (higher) regularity of the model problem (encoded in the best approximation of $V_H$), but we have at least linear convergence.
To be more precise, \eqref{eq:errormacroLOD} gives a convergence order of $H^{1+s}$ if the exact solution is in $H^{1+s}(\Omega)$. This should be contrasted with the convergence order $H^{2s}$ in $L^2(\Omega)$ for the standard FEM.

\begin{proof}
	The well-posedness of \eqref{eq:lod} follows from an inf-sup condition on $V_{H,m}$
	(see \cite{PV19helmholtzhighcontrast} for instance). This directly yields quasi-optimality
	and the error estimate \eqref{eq:errorLOD}, where we refer to \cite[Chapter~2]{Maier20} for details.
	
	Moreover, a standard duality argument can be employed to show \[\|u-(\operatorname{id}-\CQ_m)u_{H,m}\|_{0, \Omega}\lesssim (H+C_{\operatorname{ol}, m}^{1/2}\,\gamma^m)\|u-(\operatorname{id}-\CQ_m)u_{H,m}\|_{1, \Omega},\]
	i.e., quadratic convergence in the $L^2(\Omega)$-norm. We refer to, e.g., \cite{PV19helmholtzhighcontrast} for details.
	
	Finally, we have that
	\[\|u-u_{H,m}\|_{0, \Omega}\leq \|u-I_H u\|_{0, \Omega}+\|I_H u-u_{H,m}\|_{0, \Omega}\lesssim H|u-I_Hu|_{1, \Omega}+\|I_H u-u_{H,m}\|_{1, \Omega}.\]
	Due to the stability and projection property of $I_H$, we have $|u-I_Hu|_{1, \Omega}\lesssim \inf_{v_H\in V_H}|u-v_H|_{1, \Omega}$ so that it remains to estimate $\|I_H u-u_{H,m}\|_{1, \Omega}$.
	We note that by the definition of $\CQ$ and the stability of $I_H$ it holds that
	\[\|I_H u-u_{H,m}\|_{1, \Omega}=\|I_H(\operatorname{id}-\CQ)(I_H u-u_{H,m})\|_{1, \Omega}\lesssim \|(\operatorname{id}-\CQ)(I_H u-u_{H,m})\|_{1, \Omega}.\]
	Due to Proposition \ref{prop:lodideal}, there exists $\psi_H\in V_H$ with $\|\psi_H\|_{1, \Omega}=1$ such that
	\[\|(\operatorname{id}-\CQ)(I_H u-u_{H,m})\|_{1, \Omega}\lesssim a((\operatorname{id}-\CQ)(I_H u-u_{H,m}), (\operatorname{id}-\CQ)\psi_H).\]
	The definition of $\CQ$, Galerkin orthogonality and Theorem \ref{thm:localization} give that
	\begin{align*}
	\|(\operatorname{id}-\CQ)(I_H u-u_{H,m})\|_{1, \Omega}&\lesssim a((\operatorname{id}-\CQ)I_H u-(\operatorname{id}-\CQ)u_{H,m}, (\operatorname{id}-\CQ)\psi_H)\\
	&=a(u-(\operatorname{id}-\CQ_m)u_{H,m}, (\operatorname{id}-\CQ)\psi_H)\\
	&=a(u-(\operatorname{id}-\CQ_m)u_{H,m}, (\CQ_m-\CQ)\psi_H)\\
	&\lesssim C_{\operatorname{ol}, m}^{1/2}\,\gamma^m \|u-(\operatorname{id}-\CQ_m)u_{H,m}\|_{1, \Omega}.
	\end{align*}
	Combination with the estimate for $\|u-(\operatorname{id}-\CQ_m)u_{H,m}\|_{1, \Omega}$ finishes the proof.
\end{proof}

\subsection{Fully discrete method}
\label{subsec:fullydiscrete}
We now address the final challenge to obtain a fully practical generalized finite element method: the fact that the spaces $W$ and $W(\UP^m(K))$ are still infinite-dimensional.
In practice we therefore introduce a second, fine triangulation $\CT_h$ of $\Omega$ as well as the corresponding Lagrange finite element space $V_h$.
$\CT_h$ should be a shape-regular refinement of $\CT_H$, but note that $\CT_h$ is not required to be quasi-uniform.
The corrector problems \eqref{eq:correctorlocal} are then defined on the discrete space $W(\UP^m(K))\cap V_h$ and yield discrete localized correctors $\CQ_{m,h}$.

This requires the mesh $\CT_h$ to be sufficiently fine in the sense that all multiscale features and jumps of $\sigma$ are resolved, and in particular it needs to be
$\UT$-conforming. We point out that then,
$\UT(V_h) \subset V_h$, and one easily checks that $\UT_H(W \cap V_h) \subset (W\cap V_h)$.
As a result, the authors strongly believe the above analysis will still hold true with minor
modifications due to the additional discretization. We refer the reader to \cite{GP15scatteringPG}
for details on the proof of the exponential decay in this case.

The corresponding solution $u_{H,h,m}$ of our generalized finite element method \eqref{eq:lod} then approximates the FEM solution $u_h\in V_h$ on the fine mesh.
In particular, we have by the triangle inequality that \[\|u-(\operatorname{id}-\CQ_{m,h})u_{H,h,m}\|_{1, \Omega}\leq \|u-u_h\|_{1, \Omega}+\|u_h-(\operatorname{id}-\CQ_{m,h})u_{H,h,m}\|_{1, \Omega}.\]
	With the above mentioned modifications, an estimate for $\|u_h-u_{H,h,m}\|_{1, \Omega}$ similar to Theorem \ref{thm:lod} should hold, namely
	\[\|u_h-(\operatorname{id}-\CQ_{m,h})u_{H,h,m}\|_{1, \Omega}\lesssim (H+C^{1/2}_{\operatorname{ol},m}\gamma^m)\|f\|_{0, \Omega} \]
	with a constant hidden in $\lesssim$ that is independent of $H$, $h$, and $m$.
	Since $\CT_h$ is a fine, not necessarily quasi-uniform, and $\UT$-conforming triangulation, it is reasonable to assume that the finescale discretization error $\|u-u_h\|_{1, \Omega}$ is sufficiently small in comparison to the LOD error $\|u_h-u_{H,h,m}\|_{1, \Omega}$.
	Finally, we note that $u_h$ is not needed for computing $u_{H,h,m}$. However, in numerical experiments where often $u$ is not available, we use $u_h$ as reference solution and evaluate the error $\|u_h-(\operatorname{id}-\CQ_{m,h})u_{H,h,m}\|_1$ only.
	
	Concerning the practical implementation of the LOD, we refer the interested reader to \cite{EHMP16LODimpl}, where, for instance, the (parallel) computation of the correctors is addressed in detail.
	In comparison with a standard finite element method on a fine (adaptive) mesh, our method has the advantage of a much smaller linear system to be solved at the cost of a slightly more dense matrix and additional computations (in form of the local correctors) during the assembly of the stiffness matrix.
	Therefore, the method is particularly attractive if a standard finite element method on a fine grid is not feasible due to the size of the system or if the same multiscale problem has to be solved for many different right-hand sides.

\subsection{Proof of the localization error}
\label{subsec:decay}
This section is devoted to the proofs of Proposition \ref{prop:expdecay} and Theorem \ref{thm:localization}.
In the proofs we will frequently make use of cut-off functions. We collect some properties for them in the following.
Let $\eta\in H^1(\Omega)$ be a function with values in the interval $[0,1]$ satisfying the bound $\|\grad \eta\|_{L^\infty(\Omega)}\lesssim H^{-1}$ and let $\mathcal{R}:= \supp(\grad \eta)$.
Given any subset $D\subset \Omega$ as the union of elements in $\CT_H$, any $w\in W$ satisfies that
\begin{align}
\|w\|_{0,D}&\lesssim H\|\grad w\|_{0,\UN(D)}, \label{eq:poincarekernel}\\
\|(\operatorname{id}-I_H)(\eta w)\|_{0,D}&\lesssim H\|\grad(\eta w)\|_{0,\UN(D)},\label{eq:intpolcutoff}\\
\|\grad(\eta w)\|_{0,D}&\lesssim \|\grad w\|_{0,D\cap \supp \eta}+\|\grad w\|_{0,\UN(D\cap \mathcal{R})}. \label{eq:gradcutoff}
\end{align}
These properties are proved in \cite[Lemma 2]{GP15scatteringPG}.

\begin{proof}[Proof of Proposition \ref{prop:expdecay}]
	Fix $K\in \CT_H$, $v_H\in V_H$ and $m$.
	Set $\phi:=\CQ_K v_H\in W$ and $\widetilde{\phi}=(\operatorname{id}-I_H)(\eta \phi)$ with the piecewise linear and globally continuous cut-off function $\eta$ defined via
	\[\eta=0 \quad\text{in}\quad \UP^{m-4}(K),\qquad \eta=1\quad\text{in}\quad \Omega\setminus \UP^{m-3}(K).\]
	We write $\mathcal{R}=\supp(\grad \eta)$ and use in the following $\UN^k(\mathcal{R})=\UP^{m-3+k}(K)\setminus \UP^{m-4-k}(K)$.
	Note that $\|\grad \eta\|_{L^\infty(\mathcal{R})}\lesssim H^{-1}$.
	Then
	\[\|\phi\|_{1, \Omega\setminus \UP^m(K)}=\|\phi-I_H\phi\|_{1, \Omega\setminus \UP^m(K)}\leq \|\widetilde{\phi}\|_{1, \Omega}.\]
	We have $\UT_H\widetilde{\phi}\in W$ with support outside $K$ due to the definition of $\UP^m(K)$.
	Hence,
	\begin{align*}
	\|\phi\|^2_{1, \Omega\setminus \UP^m(K)}&\leq \|\widetilde{\phi}\|^2_{1, \Omega}\lesssim \alpha_\kappa^{-1}a(\widetilde{\phi}, \UT_H\widetilde{\phi})=\alpha_\kappa^{-1}a(\widetilde{\phi}-\phi, \UT_H\widetilde{\phi}).
	\end{align*}
	Note that $\supp(\widetilde{\phi}-\phi)\cap \supp(\UT_H\widetilde{\phi})\subset\UN^{1}(\mathcal{R})$ and $\|\UT_H \tilde{\phi}\|_{\UN(\mathcal{R})}\lesssim \|\tilde{\phi}\|_{1, \UN^2(\mathcal{R})}$ due to the definitions of $\UP^m(K)$ and $\mathcal{R}$.
	Hence, we obtain with the continuity of $a(\cdot, \cdot)$ 
	\begin{align*}
	\alpha_\kappa \|\phi\|^2_{1, \Omega\setminus\UP^m(K)}&\lesssim \|\widetilde{\phi}-\phi\|_{1,\UN^{1}(\mathcal{R})}\,\|\UT_H\widetilde{\phi}\|_{1, \UN^{1}(\mathcal{R})}\\*
	&\lesssim \|\widetilde{\phi}-\phi\|_{1, \UN^{1}(\mathcal{R})}\, (\|\widetilde{\phi}-\phi\|_{1, \UN^{2}(\mathcal{R})}+\|\phi\|_{1, \UN^{2}(\mathcal{R})}).
	\end{align*}
	Employing that $I_H \phi=0$ and the properties \eqref{eq:intpolcutoff} as well as \eqref{eq:gradcutoff}, we deduce
	\[\|\widetilde{\phi}-\phi\|_{1, \UN^{2}(\mathcal{R})}=\|(\operatorname{id}-I_H)((1-\eta)\phi)\|_{1, \UN^2(\mathcal{R})}\lesssim \|\phi\|_{1,\UN^{3}(\mathcal{R})}\]
	and analogously $\|\widetilde{\phi}-\phi\|_{1, \UN^{1}(\mathcal{R})}\lesssim \|\phi\|_{1,\UN^{2}(\mathcal{R})}$.
	All in all, this gives
	\[\|\phi\|^2_{1,\Omega\setminus\UP^m(K)}\leq \tilde{C}\|\phi\|^2_{1,\UP^m(K)\setminus \UP^{m-7}(K)}=\tilde{C}\|\phi\|^2_{1,\Omega\setminus \UP^{m-7}(K)}-\tilde{C}\|\phi\|^2_{1,\Omega\setminus\UP^m(K)}\]
	for some constant $\tilde{C}$.
	This yields
	\[\|\phi\|^2_{1,\Omega\setminus\UP^m(K)}\leq \frac{\tilde{C}}{1+\tilde{C}}\,\|\phi\|^2_{1,\Omega\setminus \UP^{m-7}(K)}\]
	The repeated application of this argument finishes the proof with $\tilde{\gamma}=\frac{\tilde{C}}{1+\tilde{C}}<1$.
\end{proof}

Note that the constant hidden in $\lesssim$ in Proposition \ref{prop:expdecay} depends on the interpolation constant, the norm of $\UT_H$, the continuity constant of $a(\cdot, \cdot)$ and on $\alpha_\kappa^{-1}$. In particular the latter may become very large depending on the contrast, see \cite{CV18signchangingapost} and Section \ref{sec:numexperiment}.

\begin{proof}[Proof of Theorem \ref{thm:localization}]
	We start by proving the following local estimate
	\begin{equation}\label{eq:truncerrorlocal}
	\|(\CQ_K-\CQ_{K,m})v_H\|_{1, \Omega}\lesssim \tilde{\gamma}^m \|v_H\|_{1,K}
	\end{equation}
	for some $0<\tilde{\gamma}<1$ and for any $v_H\in V_H$ and $K\in \CT_H$ as well as any (fixed) $m$.
	Note that $\CQ_{K,m} v_H$ is the Galerkin approximation of $\CQ_K v_H$ on the subspace $W(\UP^m(K))\subset W$.
	Due to the $\UT_H$-coercivity of $a(\cdot, \cdot)$ over $W(\UP^m(K))$, we have the following standard quasi-optimality 
	\begin{equation}\label{eq:correctorgalerkin}
	\|(\CQ_K-\CQ_{K,m})v_H\|_{1, \Omega}\lesssim \inf_{w_{K,m}\in W(\UP^m(K))} \|\CQ_K v_H-w_{K,m}\|_{1, \Omega}.
	\end{equation}
	We choose now  $w_{K,m}:=(\operatorname{id}-I_H)(\eta\CQ_Kv_H)$ with a piecewise linear, globally continuous cut-off function $\eta$ defined via
	\[\eta=0\quad \text{in}\quad \Omega\setminus \UP^m(K), \qquad \eta=1\quad \text{in}\quad \UP^{m-2}(K).\]
	Inserting this choice of $w_{K,m}$ into \eqref{eq:correctorgalerkin} and noting that $I_H(\CQ_Kv_H)=0$, we obtain
	\[\|(\CQ_K-\CQ_{K,m})v_H\|_{1, \Omega}\lesssim\|(\operatorname{id}-I_H)((1-\eta)\CQ_K v_H)\|_{1, \Omega}\lesssim \|\CQ_K v_H\|_{1, \Omega\setminus \UP^m(K)},\]
	where the last inequality follows from the properties \eqref{eq:intpolcutoff} and \eqref{eq:gradcutoff} similar to the arguments in the proof of Proposition \ref{prop:expdecay}.
	Combination with Proposition \ref{prop:expdecay} gives \eqref{eq:truncerrorlocal}.
	
	To prove Theorem \ref{thm:localization}, we define, for a given simplex $K\in \CT_H$ and a given number of layers $m$, the piecewise linear, globally continuous cut-off function $\eta_K$ via
	\[\eta_K=0\quad \text{in}\quad  \UP^{m+1}(K), \qquad \eta_K=1\quad \text{in}\quad \Omega\setminus\UP^{m+2}(K).\]
	For a given $v_H\in V_H$, denote $w:=(\CQ-\CQ_m)v_H=\sum_{K\in \CT_H}w_K$ with $w_K:=(\CQ_K-\CQ_{K,m})v_H$.
	By the $\UT_H$-coercivity of $a(\cdot, \cdot)$ over $W$, we have
	\[\alpha_\kappa \|w\|^2_{1,\Omega} \lesssim \alpha_\kappa|w|^2_{1,\sigma,\Omega}\leq \sum_{K\in \CT_H}a(w_K, \UT_H w)\leq \sum_{K\in \CT_H} (A_K+B_{K,1}+B_{K,2}),\]
	where, for any $K\in \CT_H$, we abbreviate
	\[A_K\!:=|a(w_K, (1-\eta_K)\UT_H w)|, \quad\! B_{K,1}\!:=|a(w_K, (\operatorname{id}-I_H)(\eta_K \UT_H w)|, \quad\! B_{K,2}\!:=|a(w_K, I_H(\eta_K\UT_H w))|.\]
	Because $(\operatorname{id}-I_H)(\eta \UT_H w)\in W$ with support outside $\UP^m(K)$, we have $B_{K,1}=0$.
	Using the property \eqref{eq:gradcutoff}, the stability of $I_H$ \eqref{eq:IHstabapprox} and $\|\UT_H w\|_{\UN(\{\eta\neq 1\})}\lesssim \|w\|_{\UN^2(\{\eta\neq 1\})}$, we deduce
	\begin{align*}
	A_K\lesssim \|w_K\|_{1, \Omega}\,\|w\|_{1,\UN^{1}(\{\eta\neq 1\})},\qquad B_{K,2}\lesssim \|w_K\|_{1, \Omega}\,\|w\|_{1,\UN^{2}(\{\eta\neq 1\})}.
	\end{align*}
	Combining these estimates and observing that $\{\eta\neq 1\}=\UP^{m+2}(K)$, we obtain
	\[\alpha_\kappa\|w\|^2_{1, \Omega}\lesssim \sum_{K\in \CT_H}\|w_K\|_{1, \Omega}\|w\|_{1, \UP^{m+4}(K)}\lesssim C_{\operatorname{ol}, m}^{1/2}\|w\|_{1, \Omega}\Bigl(\sum_{K\in \CT_H}\|w_K\|^2_{1, \Omega}\Bigr)^{1/2},\]
	which in combination with \eqref{eq:truncerrorlocal} finishes the proof.
\end{proof}

\section{Numerical experiments}
\label{sec:numexperiment}

The numerical examples were carried out in MATLAB based upon preliminary code developed at the Chair for Computational Mathematics at University of Augsburg.
We always consider $\Omega=[0,1]^2$.
	Our meshes are constructed out of blocks as depicted in Figure \ref{fig:mesh}: A mesh size of $H=2^{-N}$ with $N\geq 1$ means that the mesh consists of $N\times N$ blocks of Figure \ref{fig:mesh}.
\begin{figure}
	\centering
	\includegraphics[width=0.3\textwidth, trim=42mm 100mm 42mm 100mm, clip=true]{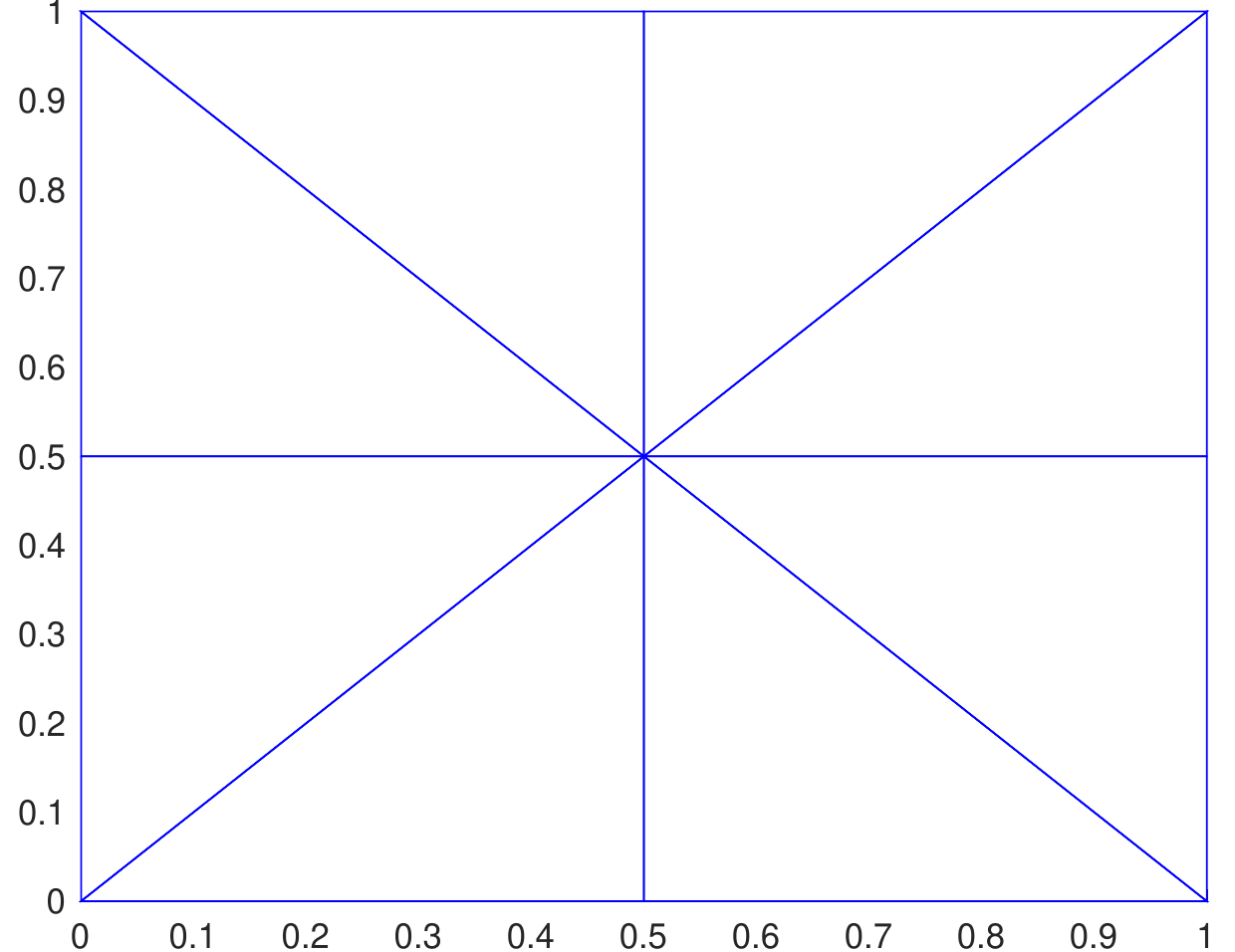}
	\caption{Building block for the meshes in the numerical experiments}
	\label{fig:mesh}
\end{figure}
The fine mesh has $h=2^{-8}$ and is $\UT$-conform in all settings described below except from the circular inclusion in Section \ref{subsec:circular}.
This fine mesh is used for the corrector computations and, additionally, for the computation of a reference solution $u_h$ using standard FEM in Sections \ref{subsec:square} and \ref{subsec:multiscale}.
The LOD solution is computed on a series of meshes with $H=2^{-1}, \ldots, 2^{-6}$ and oversampling parameters $m\in \{1,2,3\}$. We refer to $(\operatorname{id}-\CQ_m)u_{H,m}$ from \eqref{eq:lod} as the LOD solution and to $u_{H,m}$ as the macroscopic part of the LOD solution. Note that $u_{H,m}$ lies in the standard FE space.
For comparison, we also compute the standard FE solution on the coarse grids $\CT_H$ as well as the $L^2(\Omega)$-projection of the exact or reference solution onto $V_H$. The latter is referred to as the $L^2$-best approximation in $V_H$.
We compute the absolute error of the LOD solution in the $H^1(\Omega)$-semi-norm and compare it to the absolute error of the standard FEM.
From \eqref{eq:errorLOD}, we expect linear convergence of this LOD error.
Moreover, we also consider the absolute error of the macroscopic part of the LOD solution in the $L^2(\Omega)$-norm and compare it to the absolute errors of the FEM solution and the $L^2$-best approximation in $V_H$.
We expect that the macroscopic error of the LOD behaves like the $L^2$-best approximation error (cf.\ \eqref{eq:errormacroLOD}).

Finally, we note that, although our theory guarantees well-posedness of the corrector problems only if the contrast is outside a sufficiently large interval, which is larger than the analytical one,  we never experienced any well-posedness issues in practice.

\subsection{Flat interface with known exact solution}
\label{subsec:flat}

We define $\Op=\{x \in \Omega \; | \; x_2<0.5-2^{-7}\}$ and $\Om$ accordingly as
$\Om=\{x \in \Omega \; | \; x_2>0.5-2^{-7}\}$. We set $\Sp = 1$ and consider two
different cases where $\Sm = 2$ or $1.1$.
We shifted the interface $\Gamma$ from the middle line in order to have meshes $\CT_H$ that do not resolve the interface and that are not symmetric
for any $H$. Hence, we expect a poor performance of the standard FEM. 
In this case, $C_\pm(T)$ can be analytically computed: We obtain $C_\pm(T)=2\sqrt{\frac{0.5+2^{-7}}{0.5-2^{-7}}}$, such that $a(\cdot, \cdot)$ is $\UT$-coercive if $\frac{\sigma_-}{\sigma_+}>\frac{0.5+2^{-7}}{0.5-2^{-7}}\approx 1.0317$, see also \cite{CC13Tcoercivemesh}.
Hence, the model problem is well-posed for our choices of $\Sm$, but note that the condition for $\UT_H$-coercivity is most probably violated.

We consider the
following piecewise smooth function fulfilling homogeneous Dirichlet boundary conditions
\[u(x_1,x_2)=\begin{cases}
-\sigma_- x_1 (x_1-1) x_2 (x_2-1) (x_2-l),\qquad &\qquad(x_1,x_2)\in \Op,\\
x_1(x_1-1) x_2(x_2-1) (x_2-l),\qquad &\qquad(x_1,x_2)\in \Om,
\end{cases}\]
where $l=0.5-2^{-7}$ stands for the interface location.
The right-hand side $f$ is computed so that $u$ is the exact solution.
Precisely, $f(x_1,x_2)=\sigma_-(2x_2(x_2-1)(x_2-l)+x_1(x_1-1)(6x_2-2(l+1))$ and we note that $f$ is globally smooth.

\begin{figure}
	\includegraphics[width=0.47\textwidth, trim=40mm 95mm 42mm 94mm, clip=true, keepaspectratio=false]{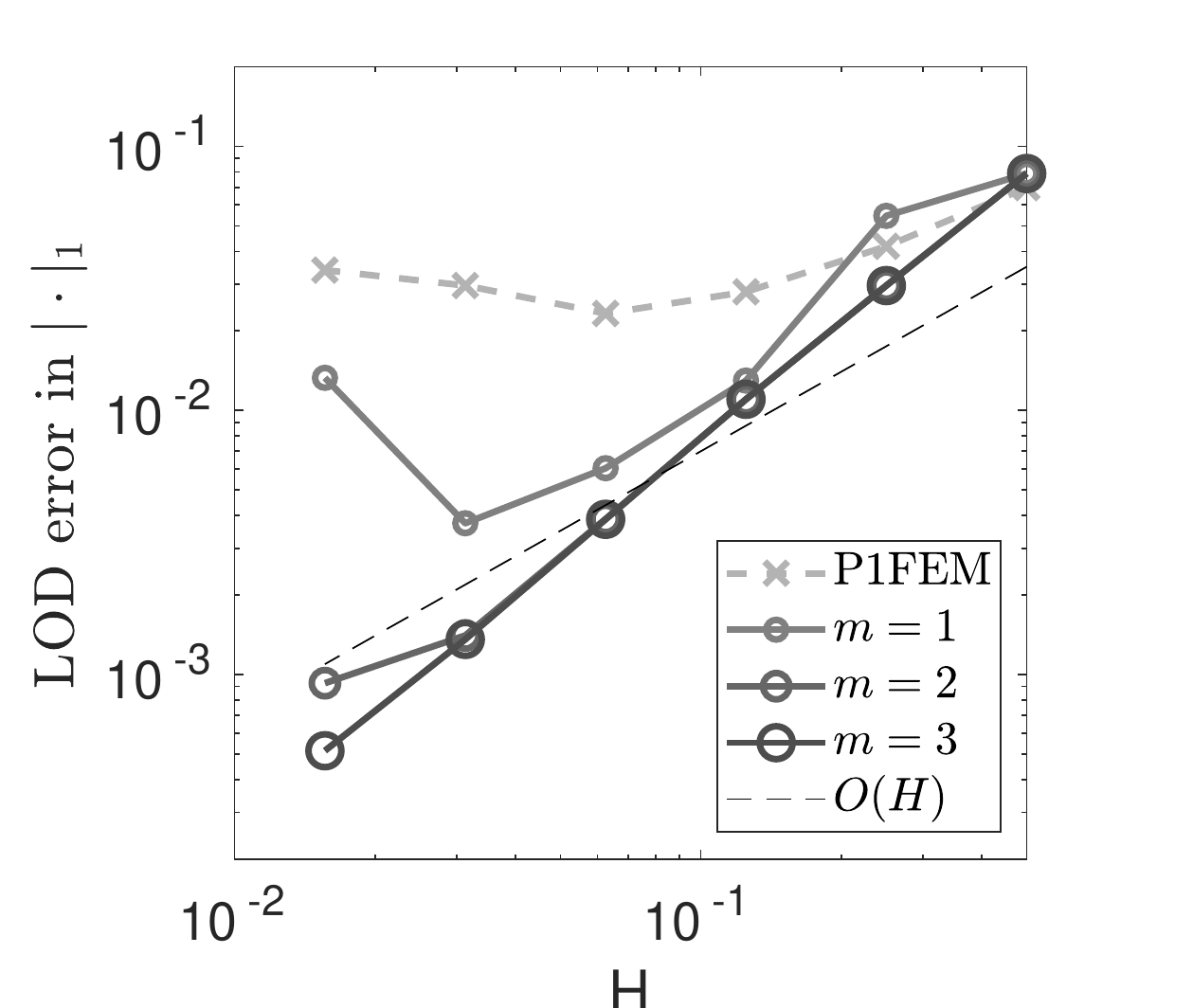}%
	\hspace{5ex}%
	\includegraphics[width=0.47\textwidth, trim=40mm 95mm 42mm 94mm, clip=true, keepaspectratio=false]{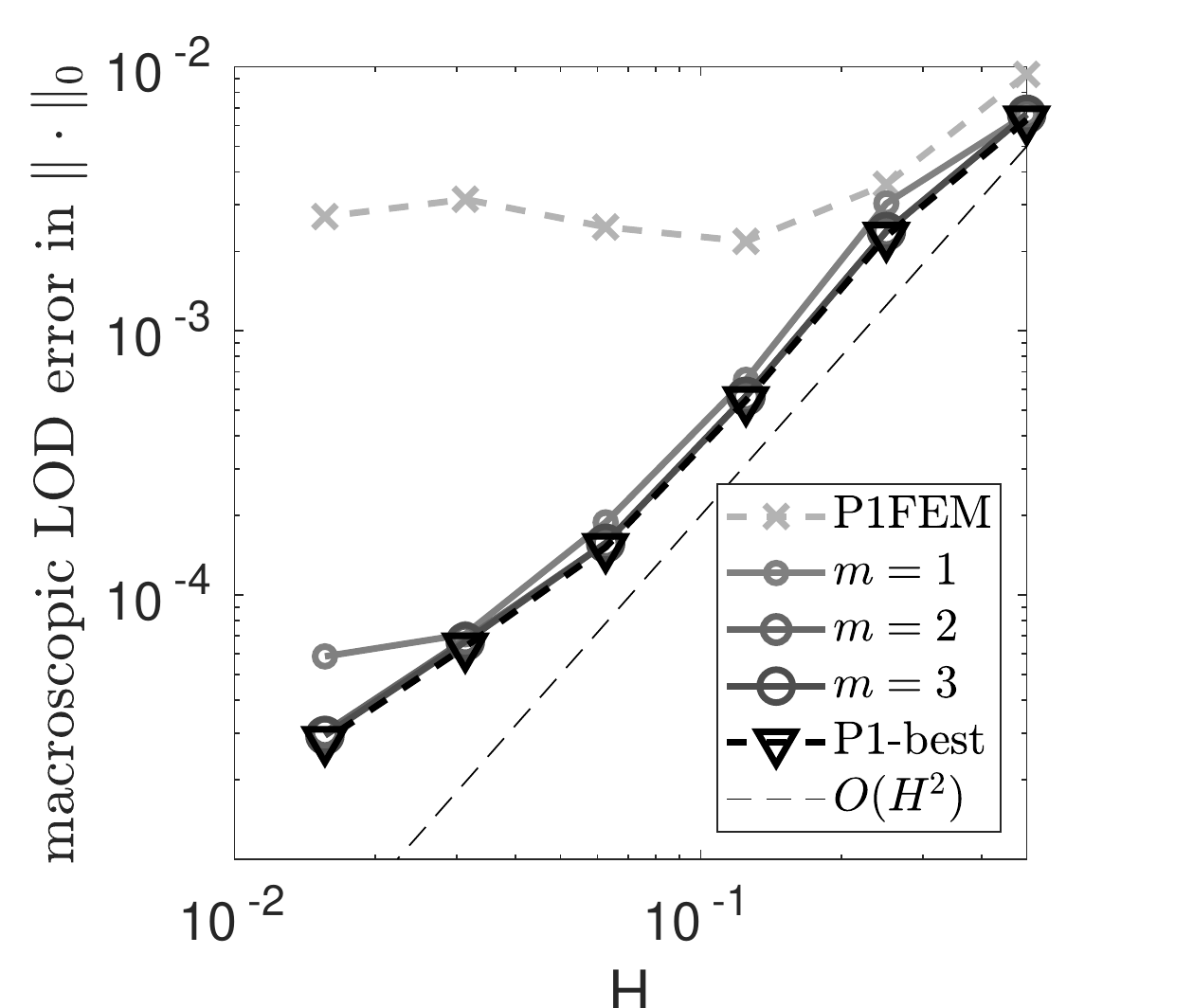}\\   
	\includegraphics[width=0.47\textwidth, trim=40mm 95mm 42mm 94mm, clip=true, keepaspectratio=false]{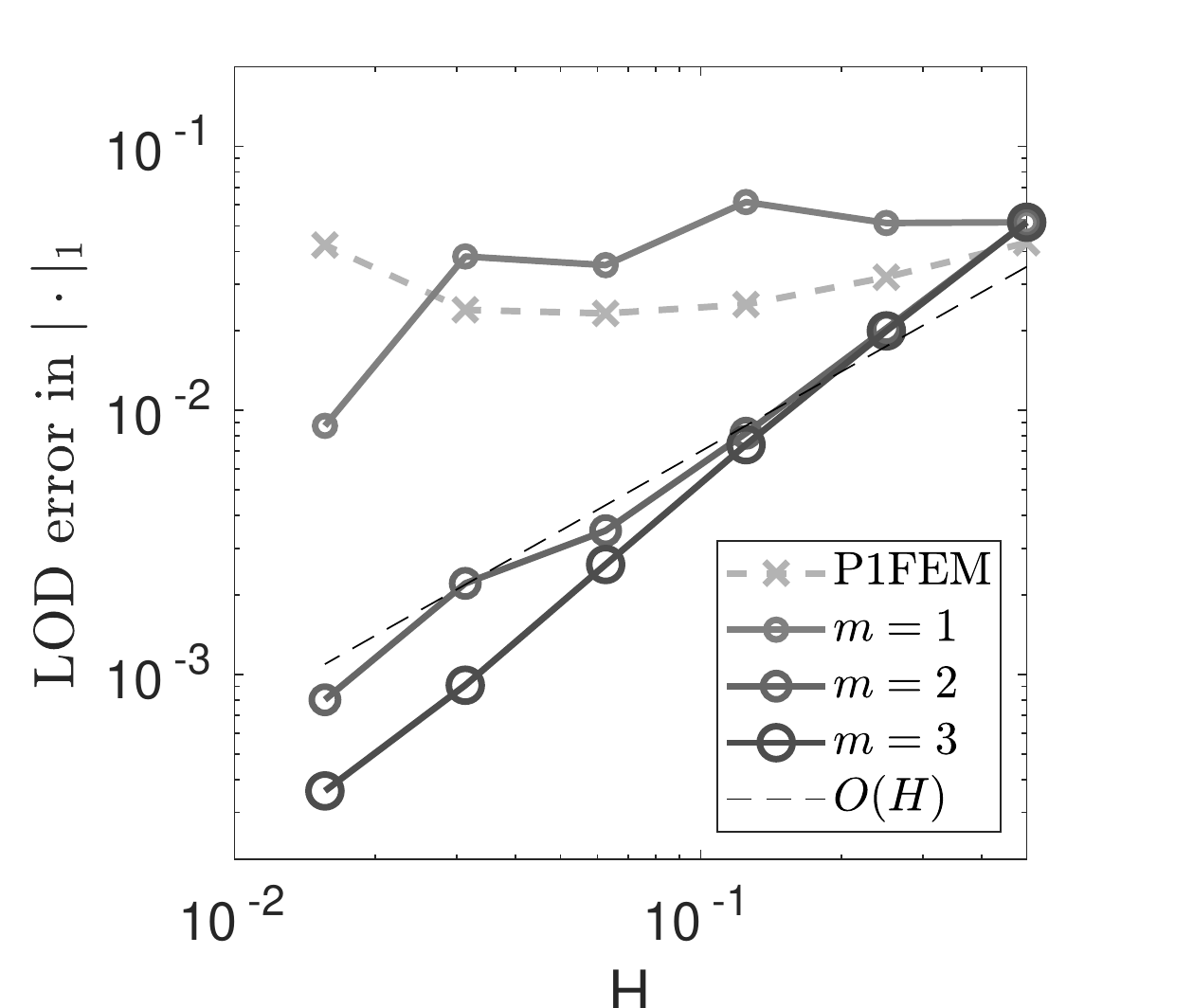}%
	\hspace{5ex}%
	\includegraphics[width=0.47\textwidth, trim=40mm 95mm 42mm 94mm, clip=true, keepaspectratio=false]{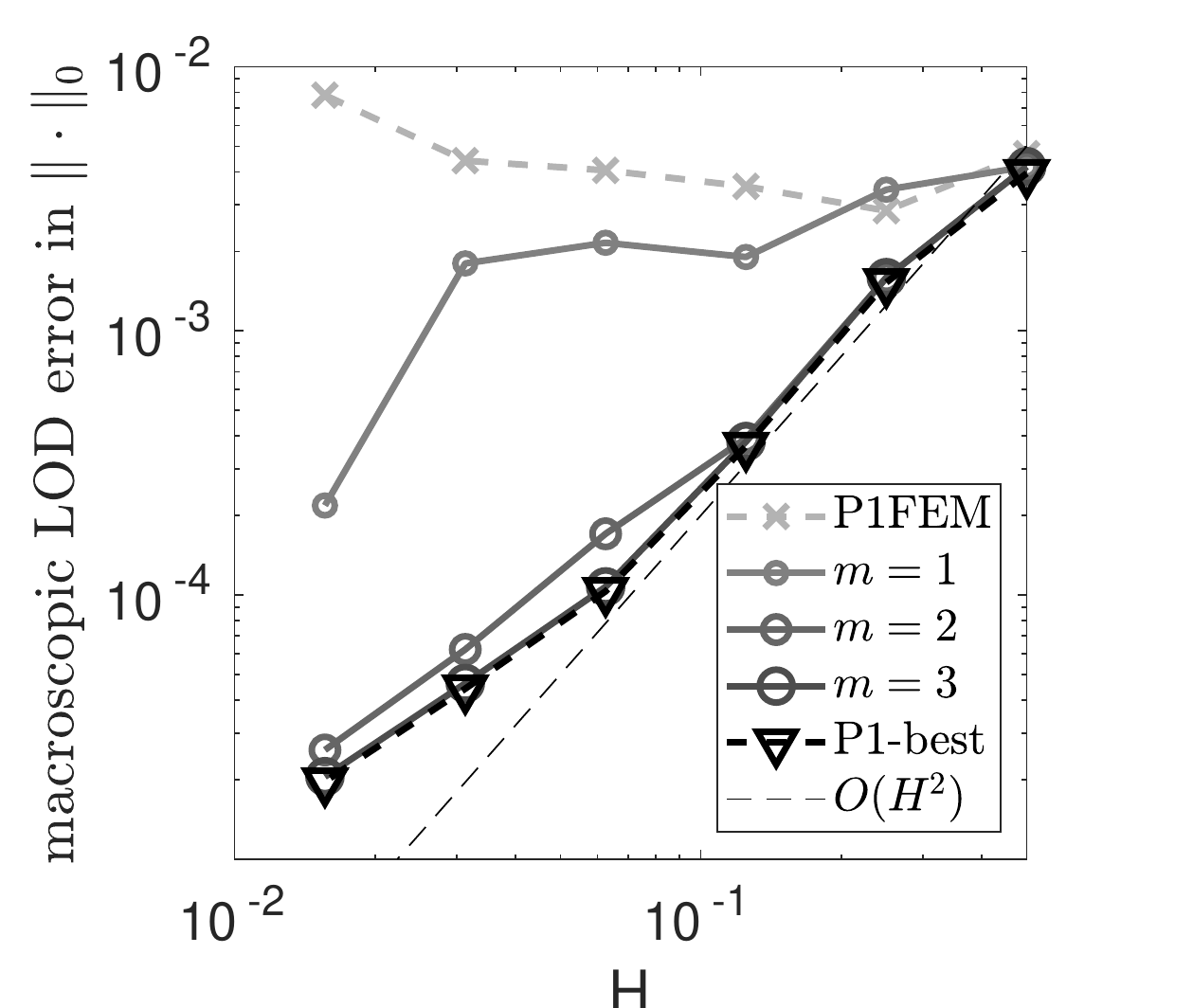}   
	\caption{Convergence histories for the flat interface with $\sigma_-=2$ (top) and $\sigma_-=1.1$ (bottom) in Section \ref{subsec:flat}.}
	\label{fig:flat-convhist}
\end{figure}

The LOD error (in the $H^1(\Omega)$-semi-norm) and the macroscopic LOD error (in the $L^2(\Omega)$-norm) for both choices of $\sigma_-$ are depicted in Figure \ref{fig:flat-convhist}.
We observe that an oversampling parameter $m=3$ is sufficient to produce faithful LOD approximations.
The LOD error in both cases converges linearly as expected and the macroscopic LOD error follows the $L^2$-best approximation.
Note that the latter converges quadratically due to the piecewise smoothness of $u$.
This nicely illustrates the findings of Theorem \ref{thm:lod}.
In contrast to the good performance of the LOD, we see the failure of the standard FEM in Figure \ref{fig:flat-convhist}.
This is of course expected from the fact that $\CT_H$ does not resolve the interface.
Moreover, we observe that for $\sigma_-=1.1$ we should select $m=3$ as oversampling parameter in the LOD, whereas for $\sigma_-=2$, $m=2$ already yields good results, see Figure \ref{fig:flat-convhist} top and bottom left.
This effect is connected to the $\tilde\alpha_\kappa$-dependency of the exponential decay: Since $\sigma_-=1.1$ is close to the critical interval, this constant in the $\UT_H$-coercivity is small so that the decay of the corrector is slow, which results in a larger oversampling region.

\begin{figure}
	\includegraphics[width=0.47\textwidth, trim=40mm 95mm 39mm 94mm, clip=true, keepaspectratio=false]{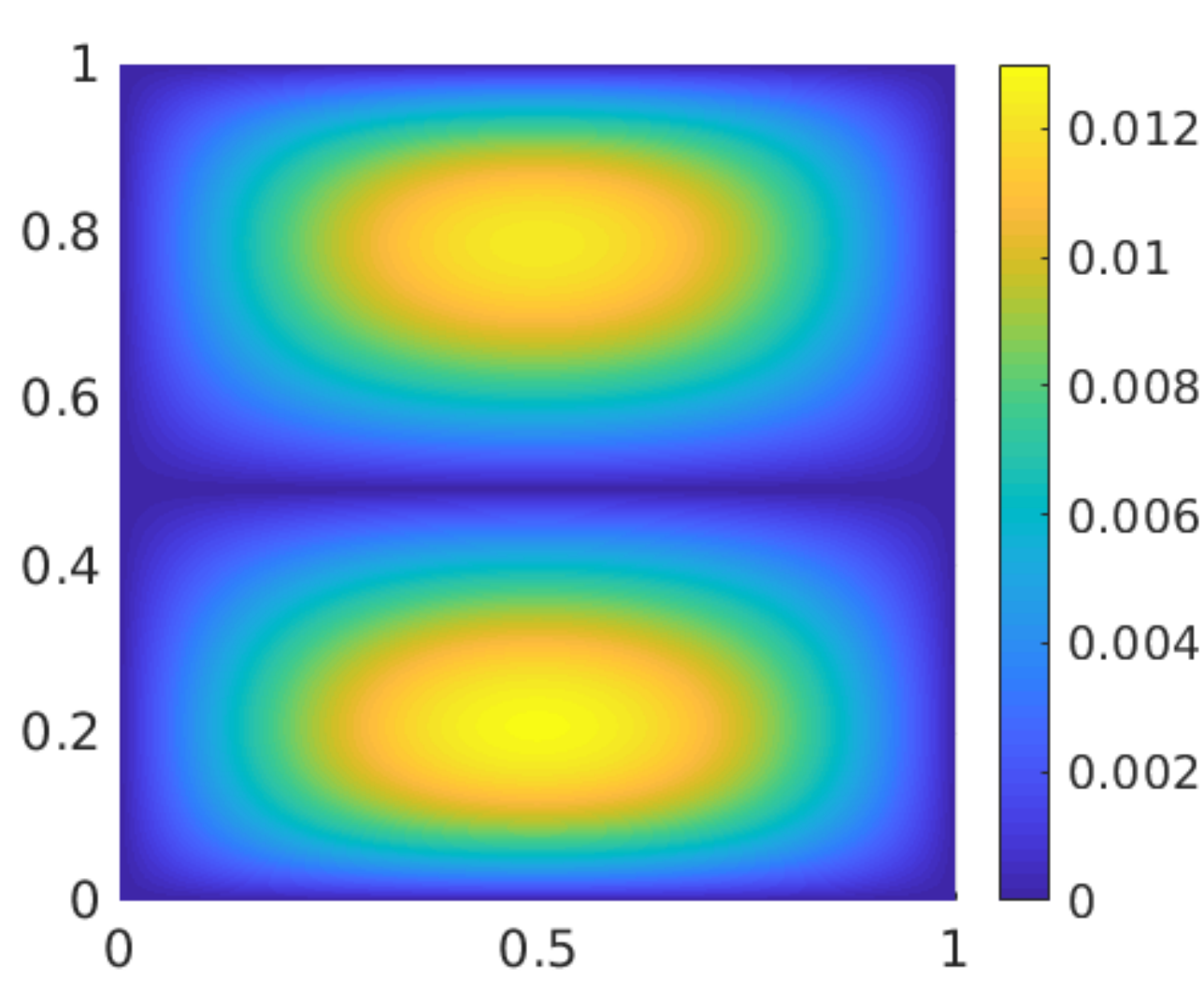}%
	\hspace{5ex}%
	\includegraphics[width=0.47\textwidth, trim=40mm 95mm 39mm 94mm, clip=true, keepaspectratio=false]{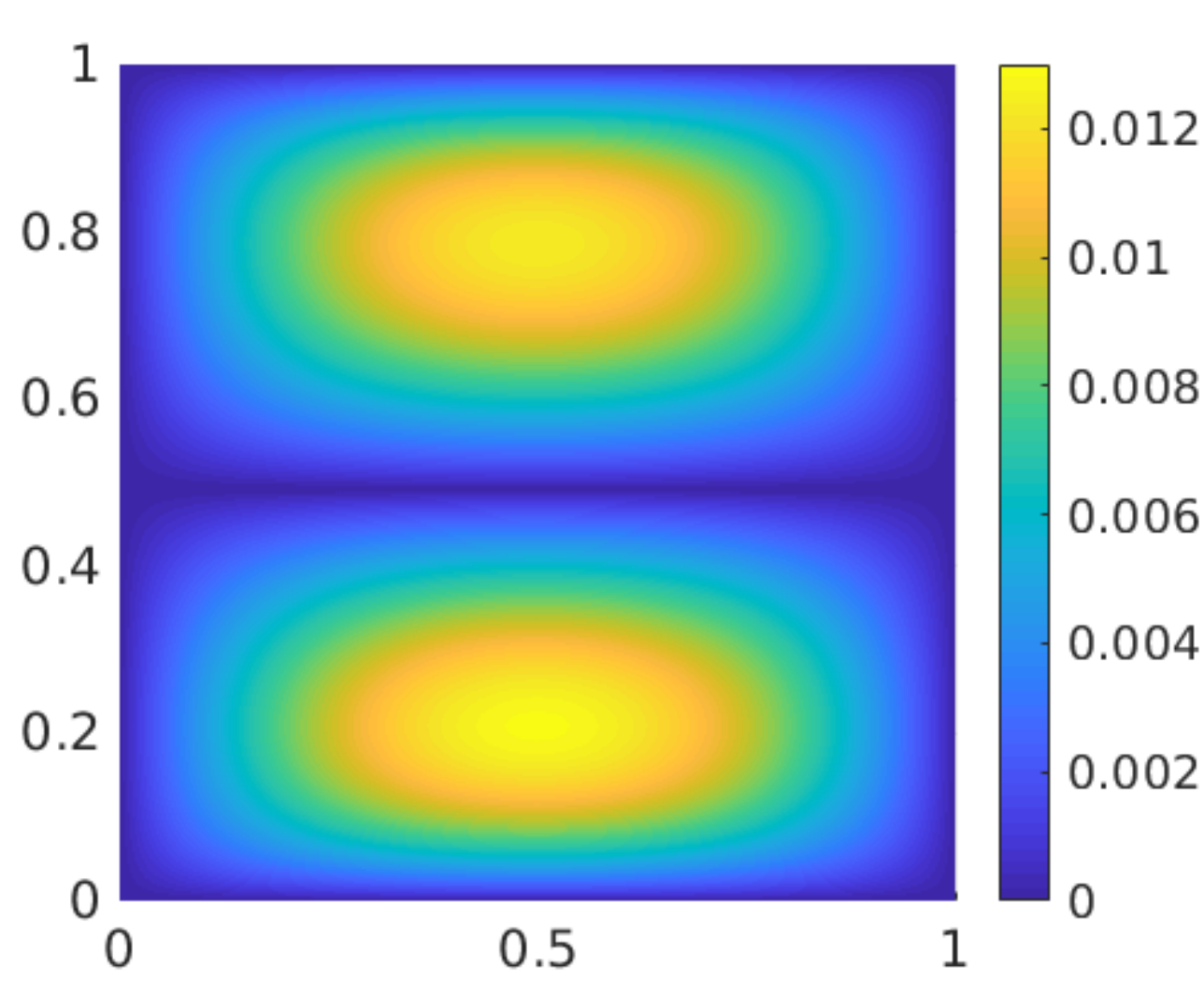}\\   
	\includegraphics[width=0.47\textwidth, trim=40mm 95mm 39mm 94mm, clip=true, keepaspectratio=false]{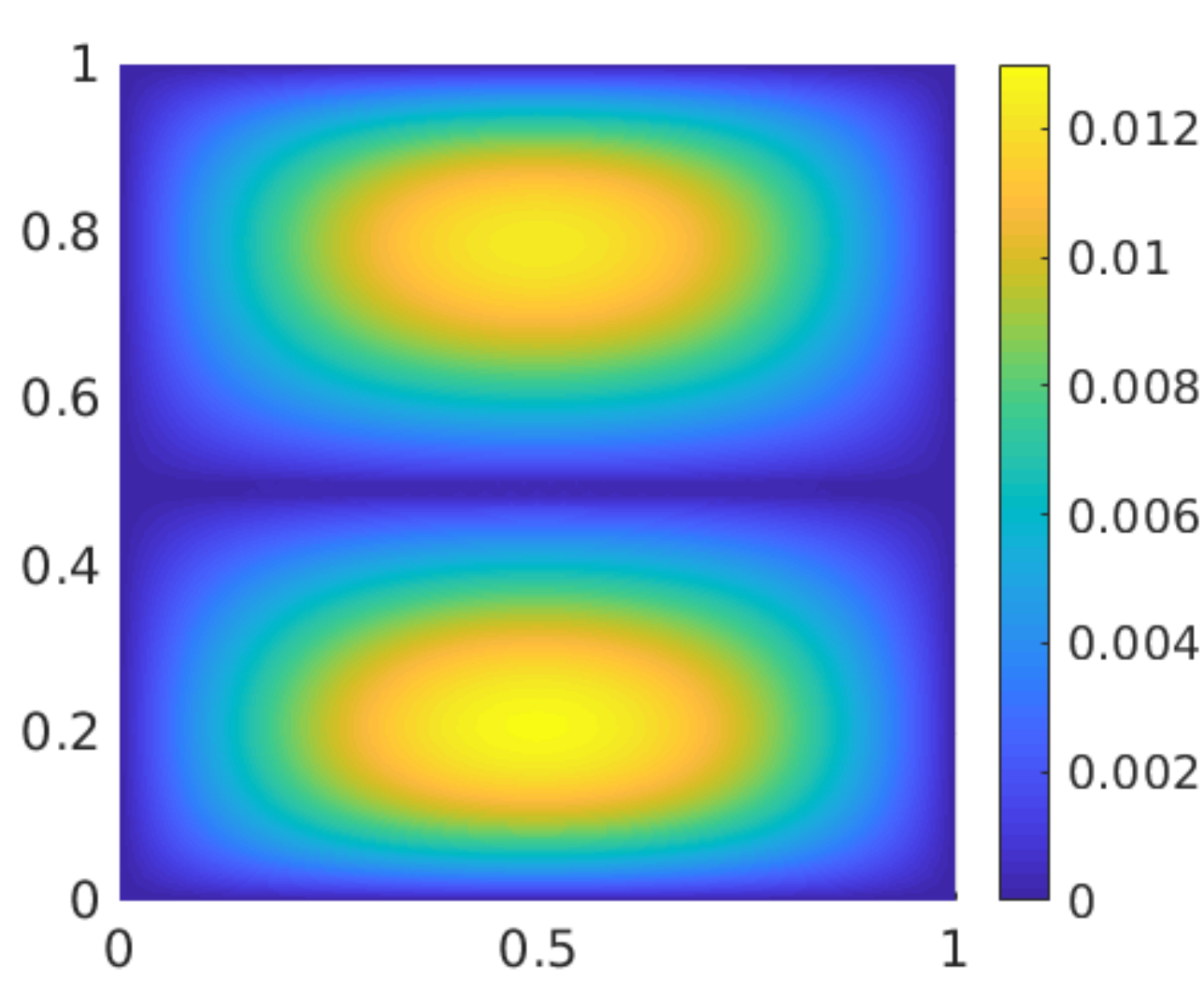}%
	\hspace{5ex}%
	\includegraphics[width=0.47\textwidth, trim=40mm 95mm 39mm 94mm, clip=true, keepaspectratio=false]{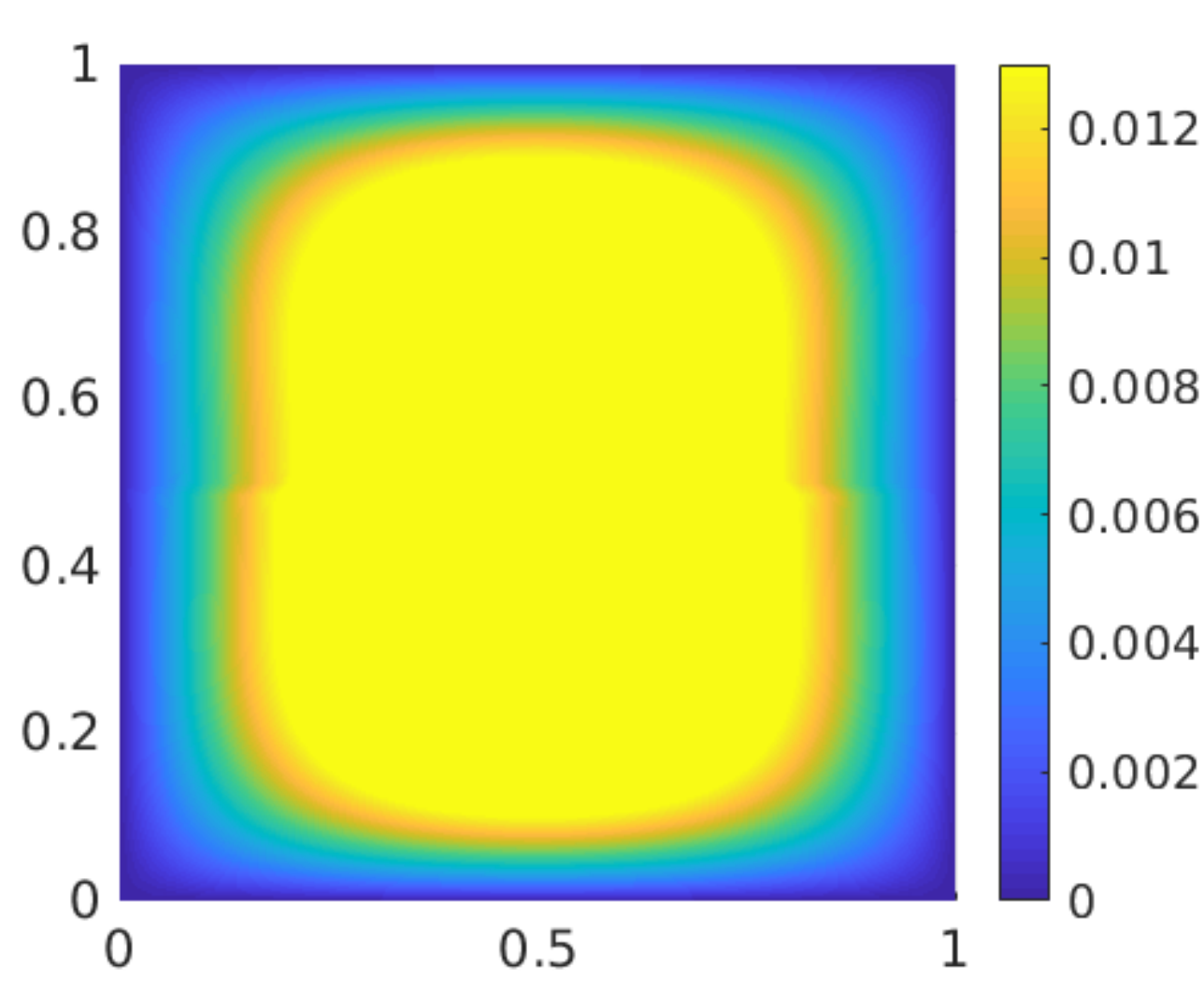}   
	\caption{Various solutions for the flat interface with $\sigma_-=1.1$ (Section \ref{subsec:flat}): exact solution $u$ (top left), LOD solution (top right), macroscopic part of the LOD solution ($u_{H,m}$, bottom left) and FE solution (bottom right).}
	\label{fig:flat-sols}
\end{figure}

We now compare for $H=2^{-6}$ and $m=3$ the LOD solution, its macroscopic part, and the FE solution to the exact solution in the case $\sigma_-=1.1$, see Figure \ref{fig:flat-sols}.
Strikingly, the FE solution has almost no resemblance with the exact solution, but the macroscopic part of the LOD (which lies in the same space $V_H$) is very close to the exact solution.
For this example, one can hardly make out any differences between the exact solution, the LOD solution and its macroscopic part, which clearly underlines the potential of our method.
In particular, we emphasize once more that good approximations (in an $L^2(\Omega)$-sense) exist in the coarse FE space $V_H$, which are found by our approach but not by the standard finite element method.
Though expected, it is interesting to see how drastically the (slight) unfit of the meshes to interface influences the performance of the standard FEM.

\begin{figure}
	\includegraphics[width=0.3\textwidth, trim=40mm 94mm 35mm 90mm, clip=true, keepaspectratio=false]{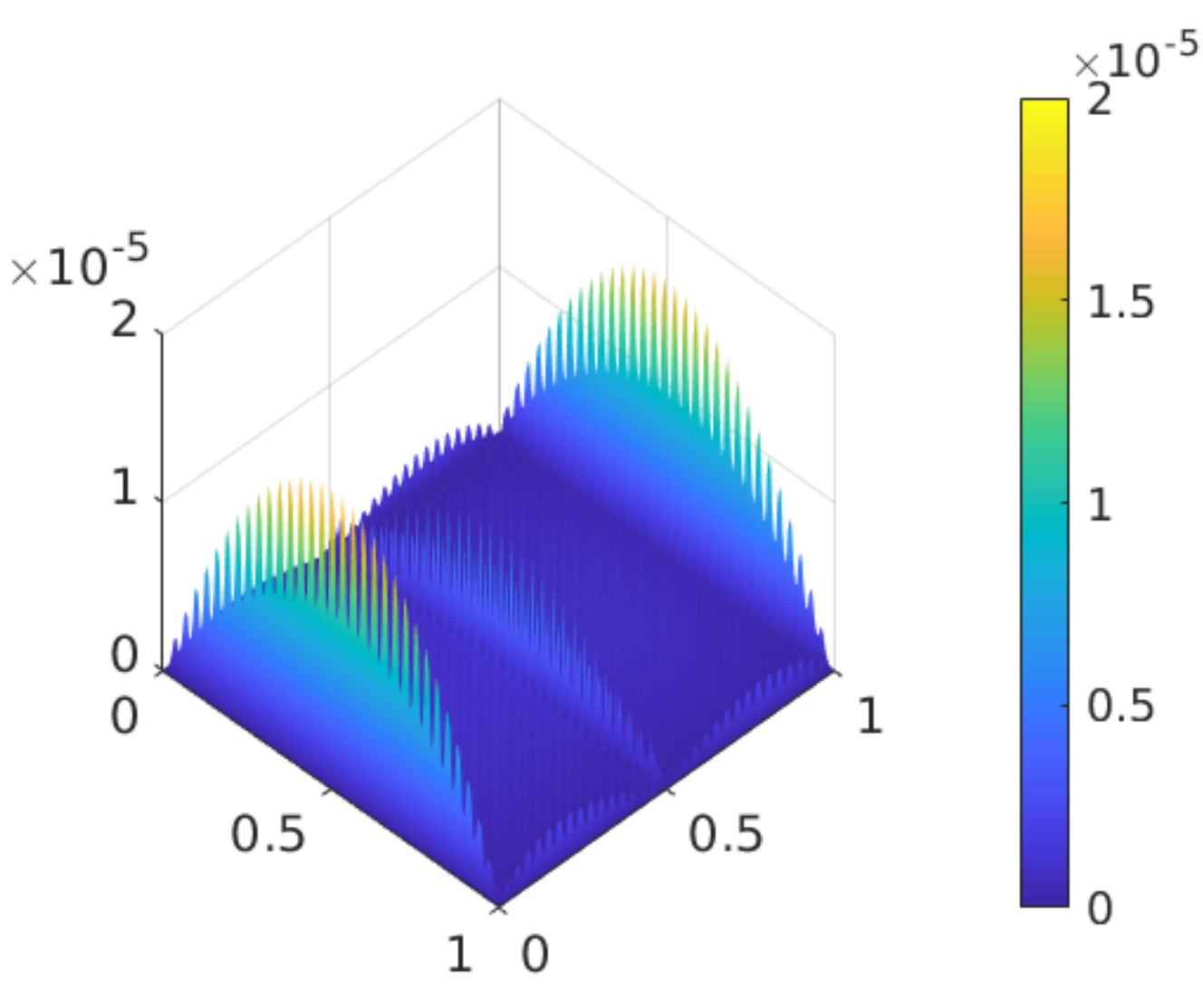}%
	\hspace{2ex}%
	\includegraphics[width=0.3\textwidth, trim=40mm 95mm 35mm 90mm, clip=true, keepaspectratio=false]{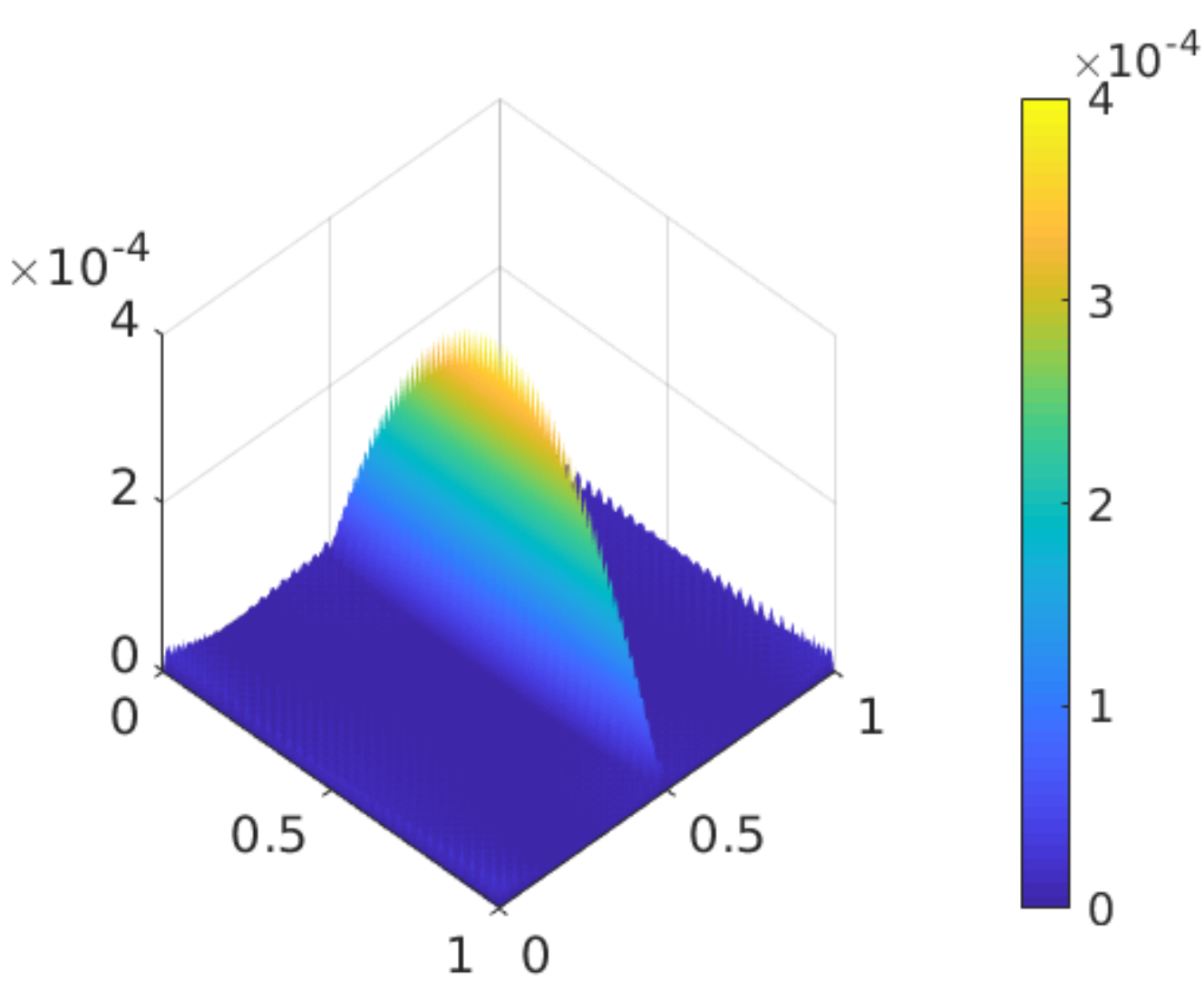}%
	\hspace{2ex}
	\includegraphics[width=0.3\textwidth, trim=40mm 95mm 33mm 90mm, clip=true, keepaspectratio=false]{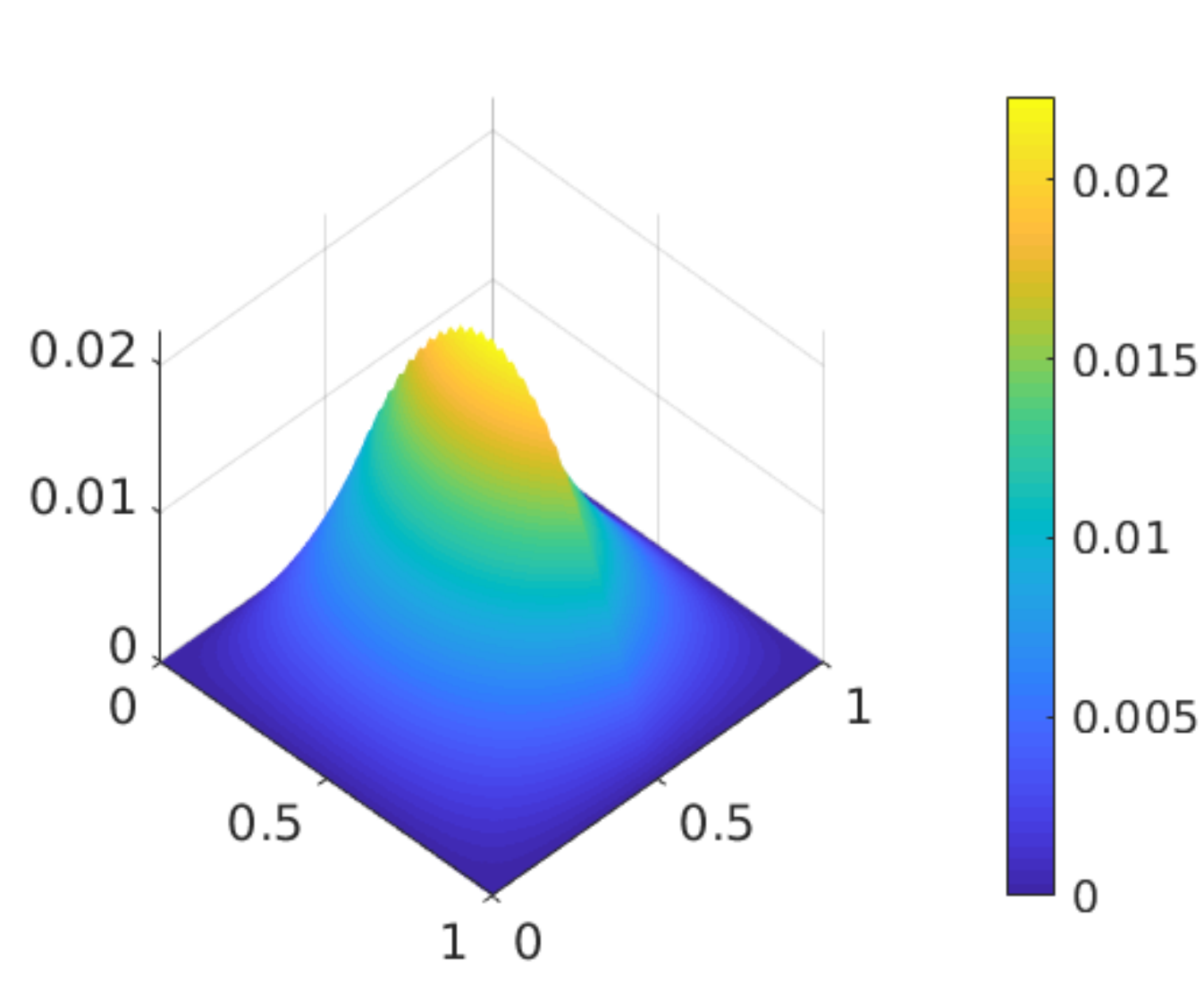}   
	\caption{Errors of the different solutions to $u$ for the interface with $\sigma_-=1.1$ (Section \ref{subsec:flat}): error of LOD solution (left), error of macroscopic part of LOD solution (middle), and error of FE solution (right).}
	\label{fig:flat-errors}
\end{figure}

To see more details, we visualize the absolute errors of the three solutions (i.e., $(\operatorname{id}-\CQ_m)u_{H,m}$, $u_{H,m}$ and $u_H$) to $u$ in Figure \ref{fig:flat-errors}.
Here, we clearly see a difference in the error distribution. The FE error (right) is very large close to the interface and this error spreads out over a large part of the domain.
In contrast, the macroscopic part of the LOD solution (middle) has a much smaller error which is furthermore very confined to the interface.
A localization of the error close to the interface is expected because on the one hand, this jump in the coefficient is not resolved by the mesh and because on the other hand, the interesting effects happen there.
In the full LOD solution (left), the error at the interface is largely reduced by the upscaling procedure so that interface and boundary errors are now of the same order.

\subsection{Square inclusion}\label{subsec:square}
We consider $\Om=[0.25, 0.75]^2$ and $\Op$ as the complement. 
The coefficient $\sigma$ is spatially varying, more precisely $\sigma|_{\Op}(x)=0.75+0.125\cos(2\pi\frac{x_1}{\varepsilon})+0.125\sin(2\pi\frac{x_2}{\varepsilon})$ and $\sigma|_\Om(x)=-5+0.5\sin(2\pi\frac{x_1}{\varepsilon})+0.5\cos(2\pi\frac{x_2}{\varepsilon})$ with $\varepsilon=2^{-7}$.
According to \cite{BCC12signchangingTcoercive}, the model problem is $\UT$-coercive for this geometry and choice of $\sigma$.
We set $f=0.1\chi_{\{x_2<0.1\}}+\chi_{\{x_2>0.1\}}$ to have a right-hand side only in $L^2(\Omega)$ and, at the same time, to keep the sign-change in $\sigma$ and the jump in $f$ apart from each other.
\begin{figure}
	\includegraphics[width=0.47\textwidth, trim=25mm 80mm 27mm 80mm, clip=true, keepaspectratio=false]{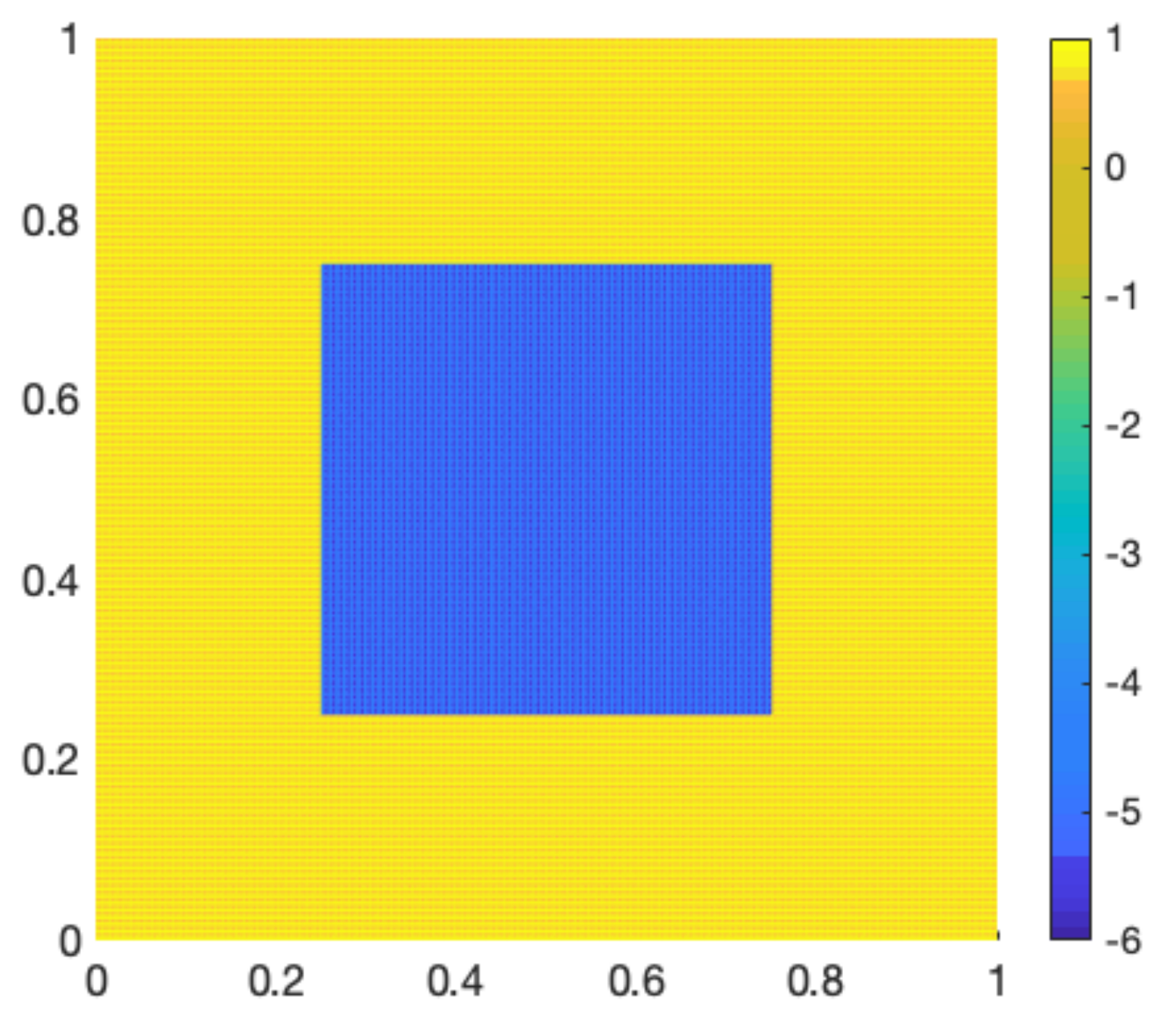}%
	\hspace{3ex}%
	\includegraphics[width=0.47\textwidth, trim=25mm 80mm 23mm 78mm, clip=true, keepaspectratio=false]{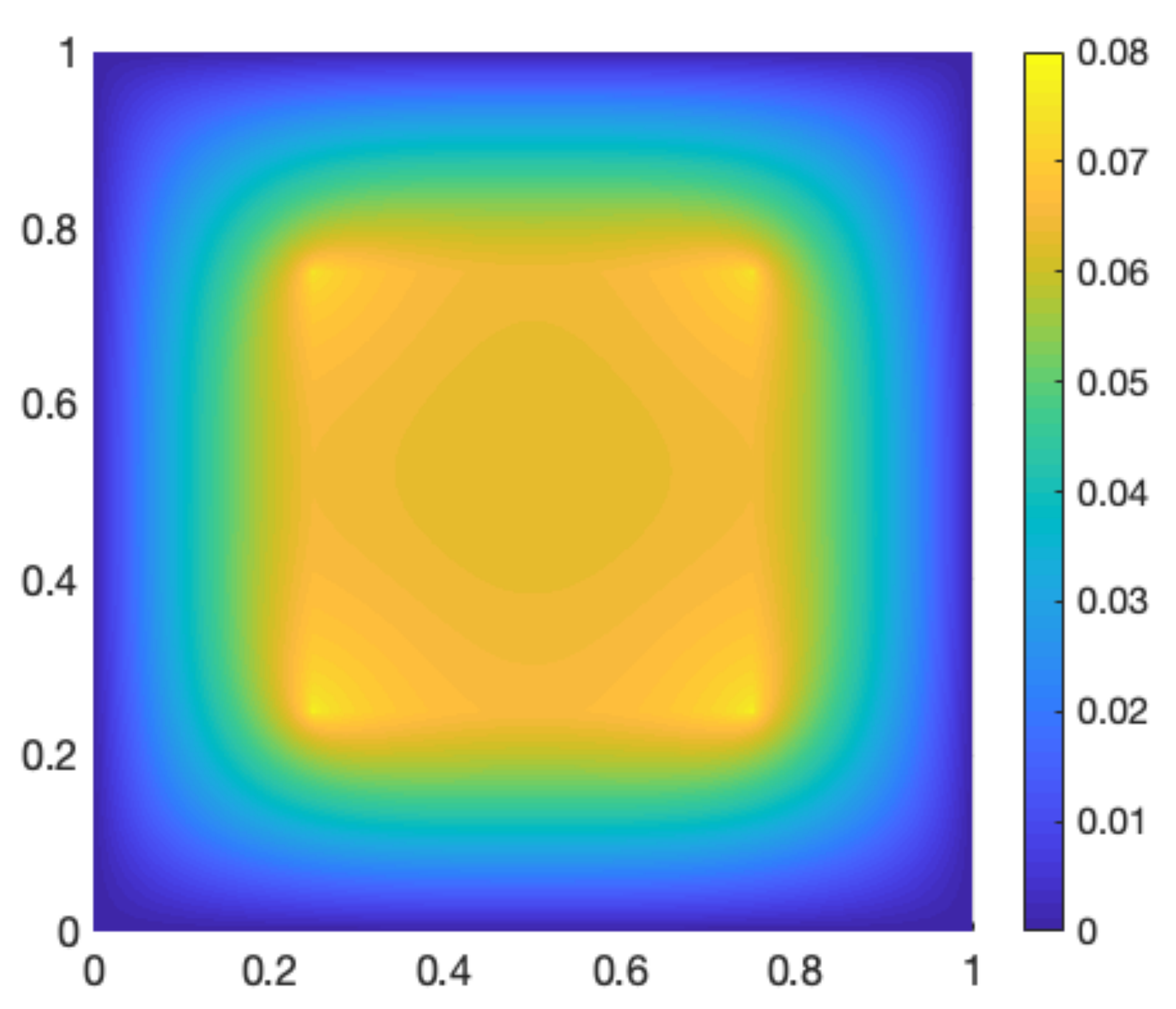}
	\caption{Coefficient (left) and fine FE solution (right) for the experiment in Section \ref{subsec:square}.}
	\label{fig:square-coeffsol}
\end{figure}
The coefficient $\sigma$ and the reference solution $u_h$, computed by a standard FEM on the fine mesh $\CT_h$, are depicted in Figure \ref{fig:square-coeffsol}.
Note that the fine mesh resolves the oscillations of $\sigma$. All coarse meshes $\CT_H$ resolve the interface and are $\UT$-conform, but note that they do not resolve the multiscale variations of $\sigma$.

\begin{figure}
	\includegraphics[width=0.47\textwidth, trim=20mm 75mm 22mm 75mm, clip=true, keepaspectratio=false]{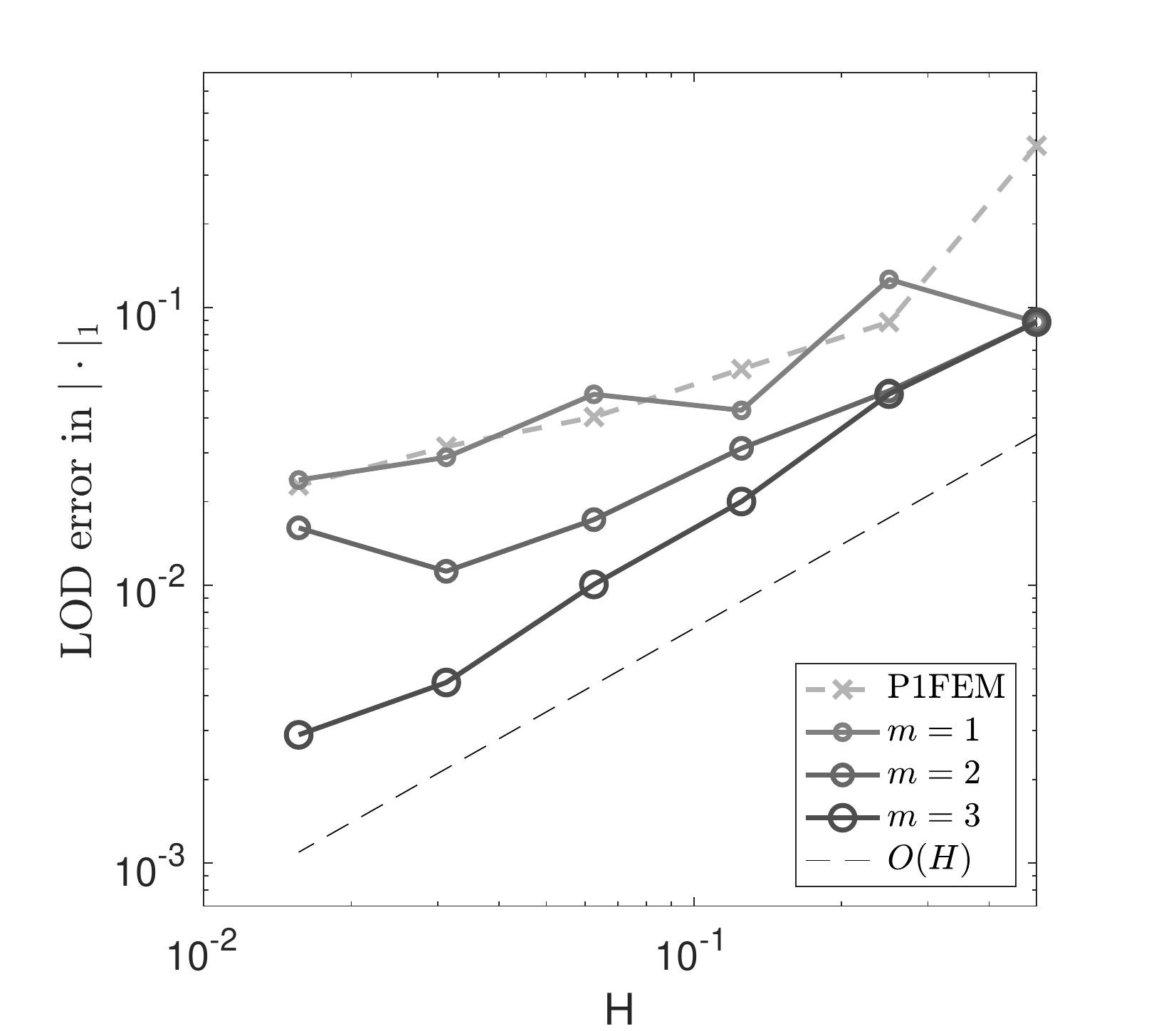}%
	\hspace{5ex}%
	\includegraphics[width=0.47\textwidth, trim=20mm 75mm 22mm 75mm, clip=true, keepaspectratio=false]{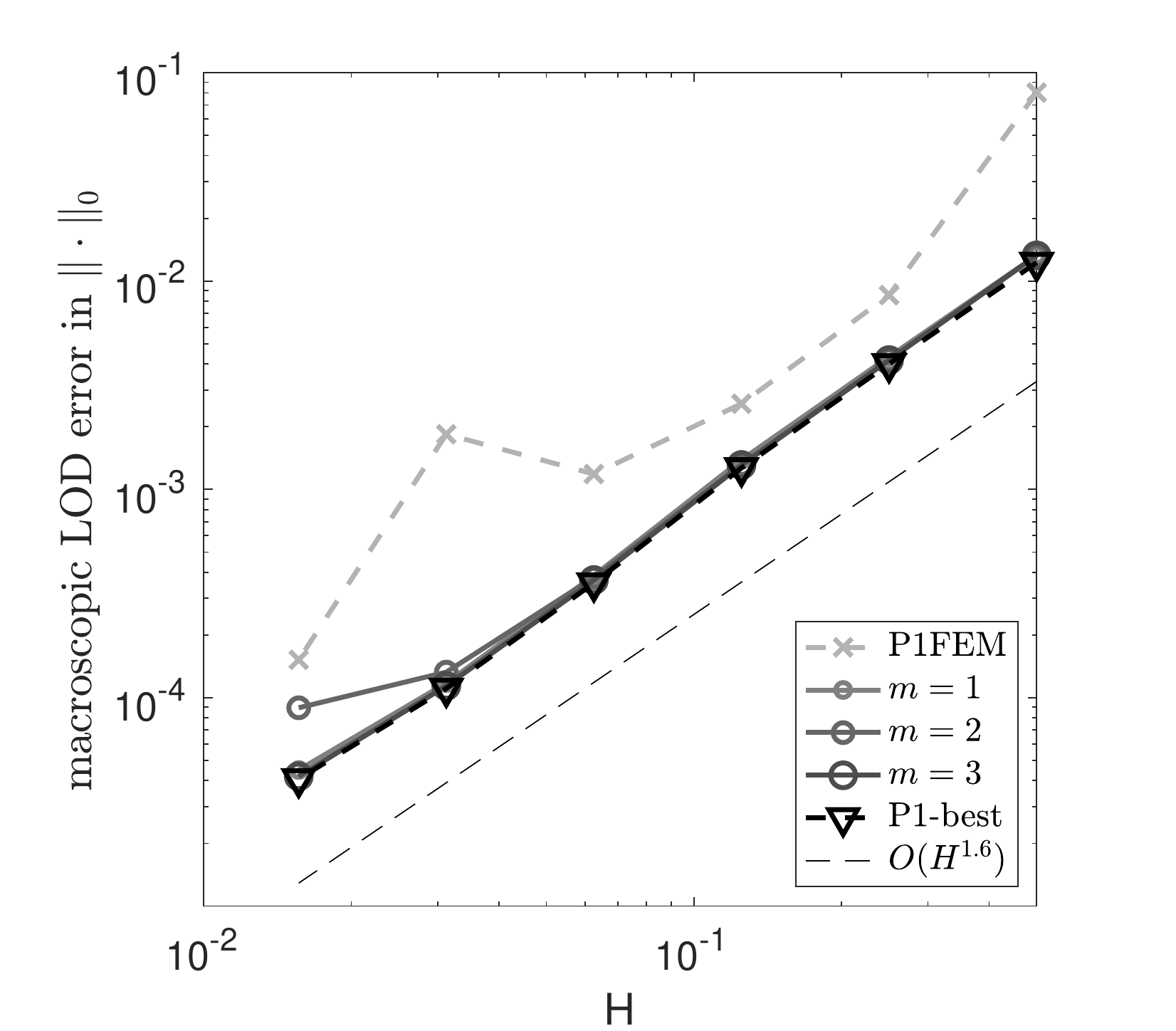}   
	\caption{Convergence histories for the square inclusion in Section \ref{subsec:square}.}
	\label{fig:square-convhist}
\end{figure} 

As in the previous section, we depict the convergence histories for the LOD error in the $H^1(\Omega)$-semi-norm and the macroscopic LOD error in the $L^2(\Omega)$-norm in Figure \ref{fig:square-convhist}.
We again observe the expected overall linear convergence of the LOD solution in the $H^1(\Omega)$-semi-norm. Moreover, the macroscopic LOD error follows the $L^2$-best approximation in the FE space, the best one can hope for.
The error for the standard finite element method is mostly decaying as well, but at a higher level in comparison to the LOD solution with $m=3$. Moreover, the rate of convergence is definitely lower and we even have a stagnation of the error in the $L^2$-norm at around $H=2^{-5}$, where the coefficient variations are not yet resolved.
Note that for this experiment, the $L^2$-best approximation no longer converges quadratically.
More precisely, both the macroscopic LOD error and the $L^2$-best approximation converge at an average approximate rate of $1.66$ as we calculated by taking the average of the experimental orders of convergence.
The regularity $u\in H^{1+\lambda}(\Omega)$ of the exact solution was studied for the present configuration with piecewise constant $\sigma$ in \cite{BDR99signchangingsing, NV11signchaningapost}. Inserting into these results the average values of $\sigma|_\Op$ and $\sigma|_\Om$, one obtains $\lambda\approx0.52$. The $L^2$-best approximation is thus converging slightly faster than the simple ad-hoc regularity calculation for constant coefficients predicts.

This experiment underlines the applicability and advantages of the method for oscillating coefficients.
Further, we emphasize that we have linear convergence of the LOD error in the $H^1$-semi-norm although the exact solution is definitely not in $H^2(\Omega)$ due to the corners at the interface.

\subsection{Circular inclusion with known exact solution}\label{subsec:circular}
We consider $\Om=B_{0.2}((0.5, 0.5))$, i.e., a circle with radius $0.2$ around the point $(0.5, 0.5)$, and $\Op$ the complement.
Since the boundary of $\Om$ is smooth, the critical interval consists only of the value $-1$. Hence, we choose $\sigma_+=1$ and $\sigma_-=2$ as in Section \ref{subsec:flat}.
We select a radially symmetric exact solution with homogeneous Dirichlet boundary conditions as follows. Let $(r,\varphi)$ denote the standard polar coordinates and set $\tilde r = r-0.5$. Then $u$ is given by
\[u(\tilde r)=
\begin{cases}
A\tilde r^2(\tilde r-0.2)(\tilde r-0.4)^2, \qquad &\qquad\tilde r<0.2,\\
-A\sigma_- \tilde r^2(\tilde r-0.2)(\tilde r-0.4)^2, \qquad &\qquad0.2<\tilde r<0.4,\\
0 \qquad &\qquad\text{else}
\end{cases}\]
and $f$ is calculated accordingly.
The scalar factor $A$ is used to scale the solution $u$ to an $L^\infty(\Omega)$-norm of order $1$, we pick here $A=10000$.
Note that the right-hand side $f$ is piecewise smooth and does \emph{not} possess a singularity at $(0.5, 0.5)$.
The exact solution $u$ is depicted in Figure \ref{fig:circular-conv}, left.

\begin{figure}
	\includegraphics[width=0.47\textwidth, trim=40mm 95mm 42mm 94mm, clip=true, keepaspectratio=false]{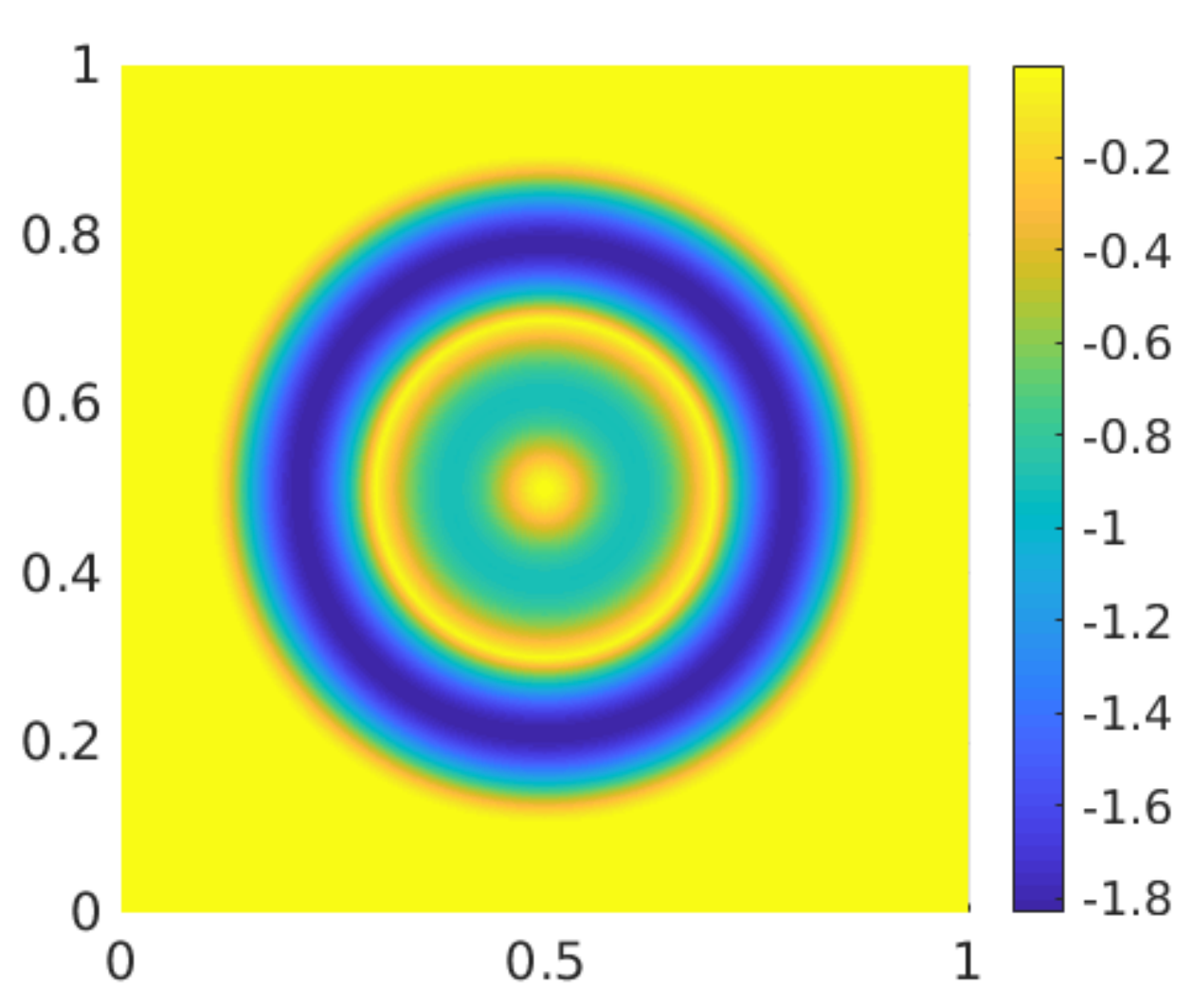}%
	\hspace{5ex}%
	\includegraphics[width=0.47\textwidth, trim=40mm 95mm 42mm 94mm, clip=true, keepaspectratio=false]{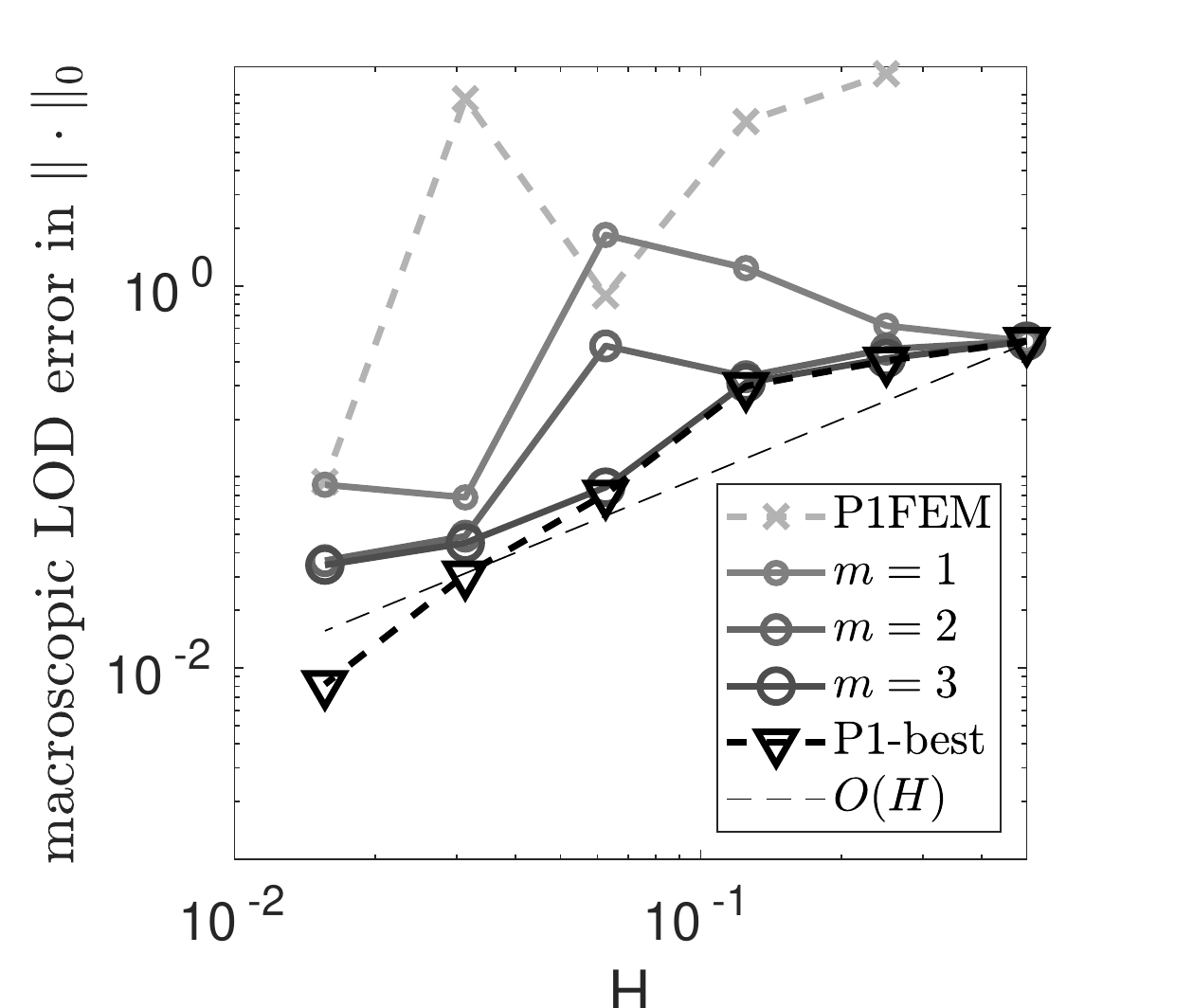}   
	\caption{Exact solution (left) and convergence history for the macroscopic LOD error in the $L^2$-norm (right) for the circular inclusion in Section \ref{subsec:circular}.}
	\label{fig:circular-conv}
\end{figure}

The curved interface is never resolved, neither by the coarse meshes $\CT_H$ nor by the fine reference mesh $\CT_h$. In particular, the standard FEM solution on $\CT_h$ may be not very reliable, which implies that the fine discretization in the LOD method might not be a faithful approximation either.
We note that in this example, a simple use of a isoparametric elements will most probably yield a good approximation with less computational effort than the LOD, but we nevertheless check the convergence rates of our method.
In the present example, the absolute $L^2(\Omega)$-error between the exact solution $u$ and the FEM solution on the fine grid $\CT_h$ is of order $10^{-2}$.
Nevertheless, the convergence plot of the macroscopic LOD solution in the $L^2(\Omega)$-norm in Figure \ref{fig:circular-conv} shows rather promising results. At least for $m=2,3$, the macroscopic LOD error still follows the best approximation error -- at least for coarse mesh sizes $H$.
We observe a deviation from this desired best-approximation error for finer meshes because the discretization error on the underlying fine mesh $\CT_h$ starts to dominate.
Given these considerations and emphasizing once more that neither $\CT_h$ nor the coarse meshes resolve the interface, the convergence results of Figure \ref{fig:circular-conv} are very satisfying.

\subsection{Multiscale sign-changing coefficient}
\label{subsec:multiscale}
We consider a multiscale, sign-changing coefficient as depicted in Figure \ref{fig:multiscale-coeffsol}, left.
It is periodic on a scale $\varepsilon=2^{-5}$ and takes the values $-4$ (blue) and $1$ (yellow).
We set $f\equiv1$ and compute a standard FE solution $u_h$ on the mesh $\CT_h$ as reference, see Figure \ref{fig:multiscale-coeffsol}, right. Note that $\CT_h$ resolves all the jumps of the coefficient so that we can hope that $u_h$ is a good approximation of the unknown exact solution $u$.
\begin{figure}
	\includegraphics[width=0.47\textwidth, trim=40mm 95mm 42mm 94mm, clip=true, keepaspectratio=false]{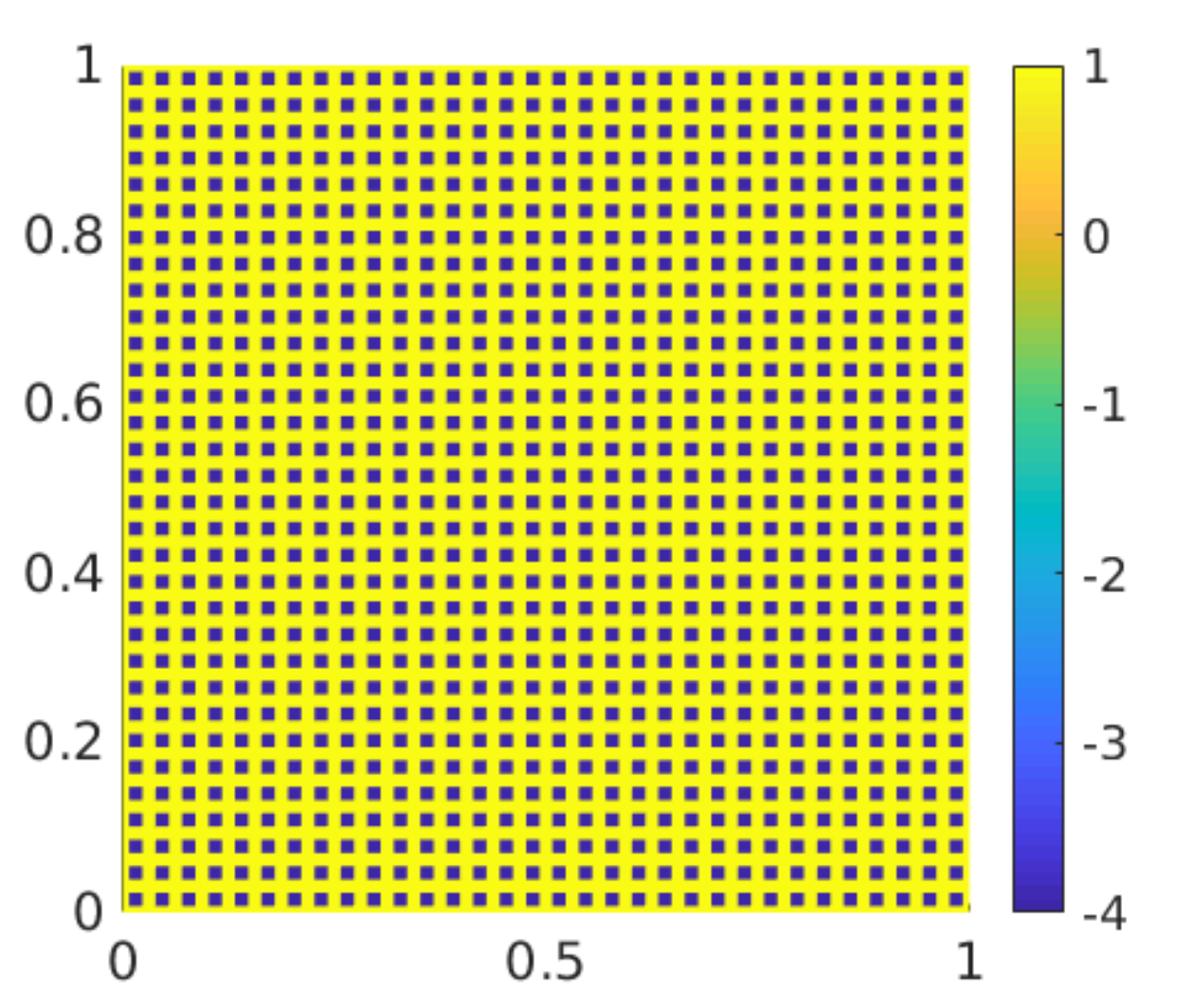}%
	\hspace{3ex}%
	\includegraphics[width=0.47\textwidth, trim=40mm 95mm 38mm 90mm, clip=true, keepaspectratio=false]{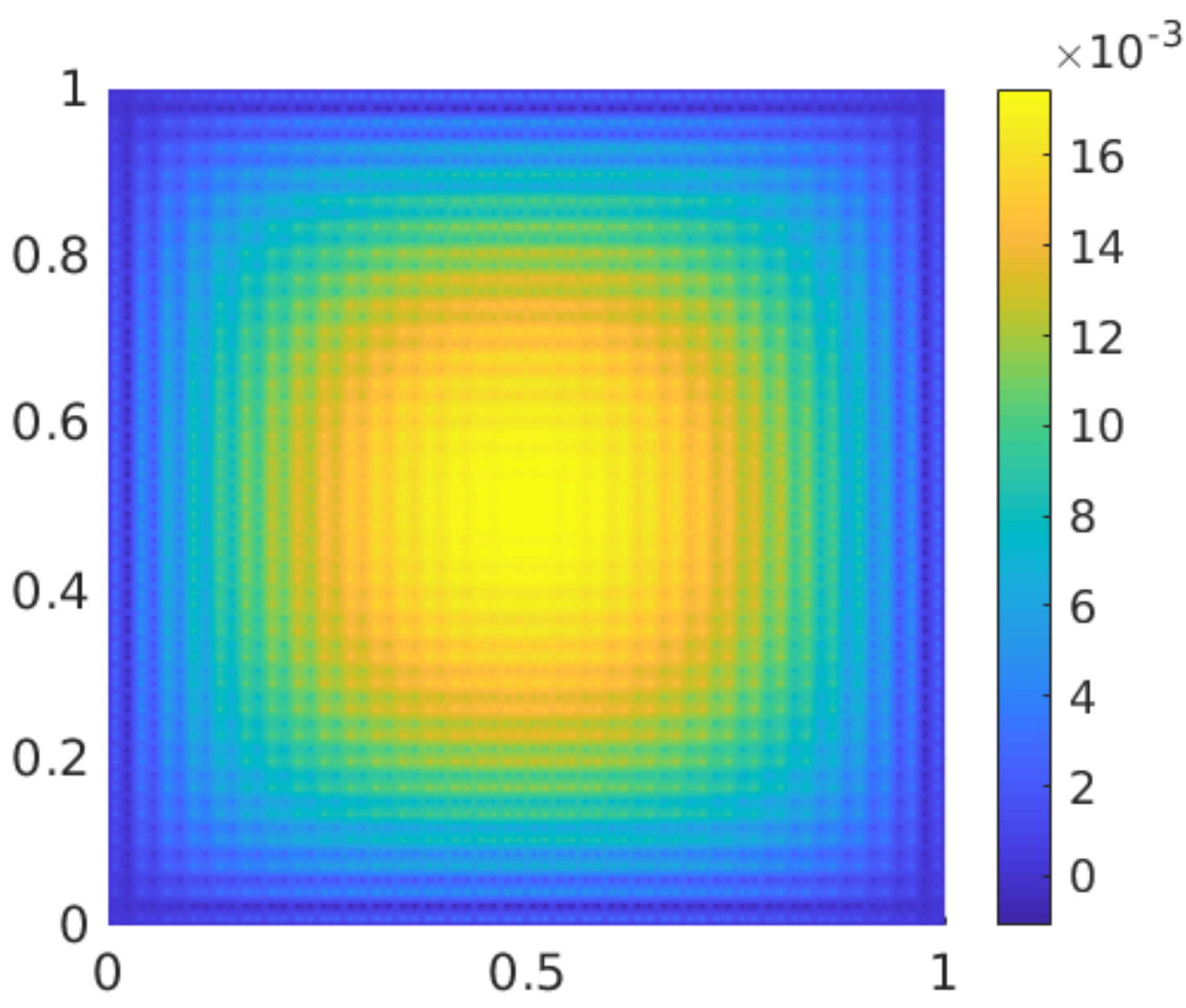}
	\caption{Coefficient (left) and fine FE solution (right) for the experiment in Section \ref{subsec:multiscale}.}
	\label{fig:multiscale-coeffsol}
\end{figure}
In this example, we illustrate the homogenization feature of the LOD and its attractive performance even in the pre-asymptotic region, i.e., for meshes that do not resolve the discontinuities of the coefficient.
For the coarse mesh $\CT_H$ with $H=2^{-4}$ and $m=3$, we depict the LOD solution, its macroscopic part, and the FE solution in Figure \ref{fig:multiscale-sols}.
First of all, we observe that the standard FEM fails on this coarse mesh because the multiscale features of the coefficient are not resolved.
To be more precise, FEM on the coarse mesh essentially calculates the solution to a diffusion problem with the coefficient $\tilde{\sigma}$ as the element-wise (arithmetic) mean of $\sigma$, i.e., $\tilde{\sigma}|_T =|T|^{-1}\int_T \sigma \, dx$ for all mesh elements $T$.
Since for all coarse mesh elements $T$, we have $|T\cap \Om|=\frac 14 |T|$ and $|T\cap \Op|=\frac 34|T|$ this average of $\sigma$ equals $-\frac 14$ in this example, which nicely explains the ``bump'' pointing in the negative direction in Figure~\ref{fig:multiscale-sols} (right).
This observation is already expected and well understood for the classical elliptic diffusion problem, see \cite{Pet15LODreview} for an excellent review.
In contrast, the LOD produces faithful approximations. Its macroscopic part can be seen as a homogenized solution and already contains the main characteristic features of the solution.
The full LOD solution also takes finescale features into account and thereby is even closer to the reference solution. This of course comes at the cost of higher computational complexity.

\begin{figure}
	\includegraphics[width=0.3\textwidth, trim=40mm 94mm 38mm 90mm, clip=true, keepaspectratio=false]{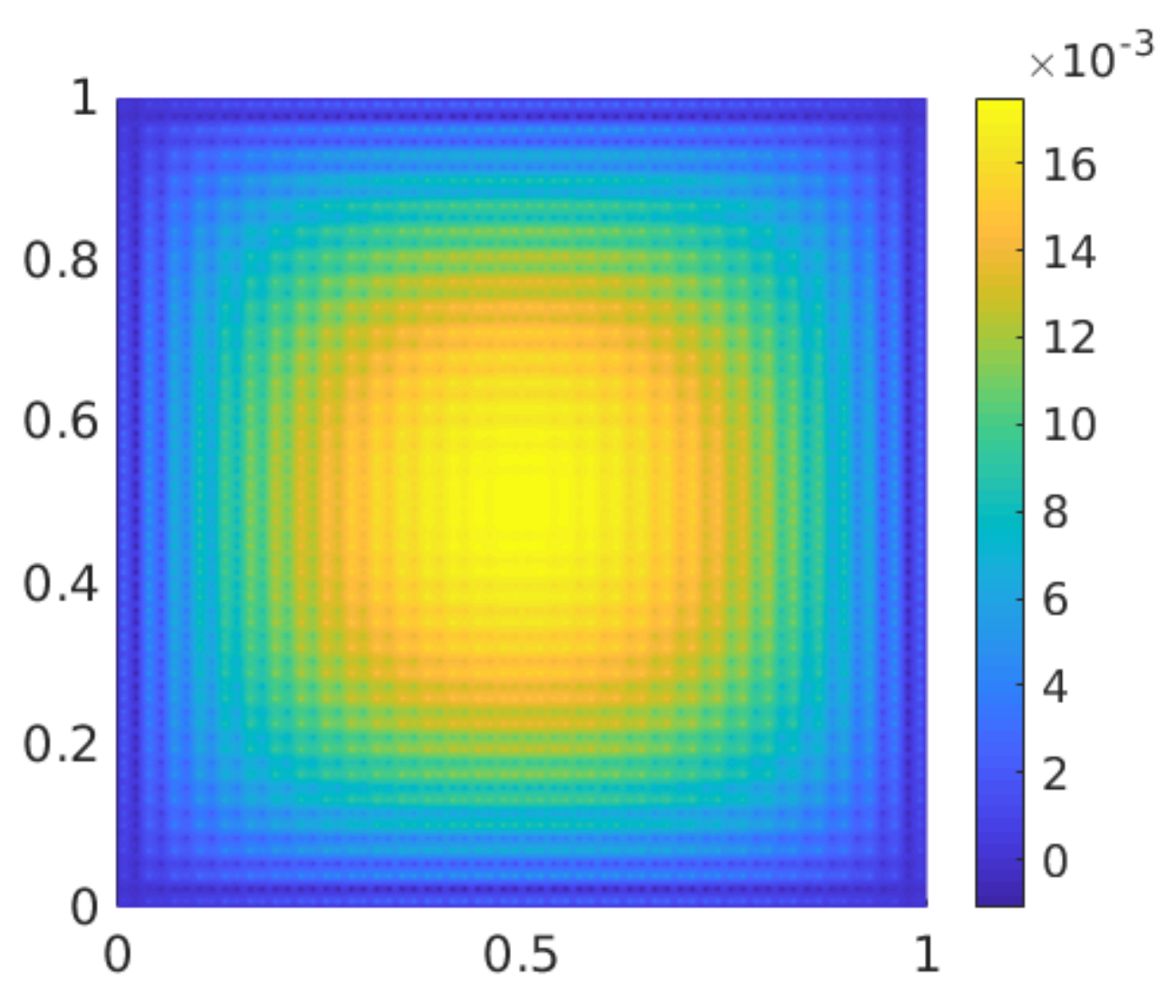}%
	\hspace{2ex}%
	\includegraphics[width=0.3\textwidth, trim=40mm 95mm 38mm 90mm, clip=true, keepaspectratio=false]{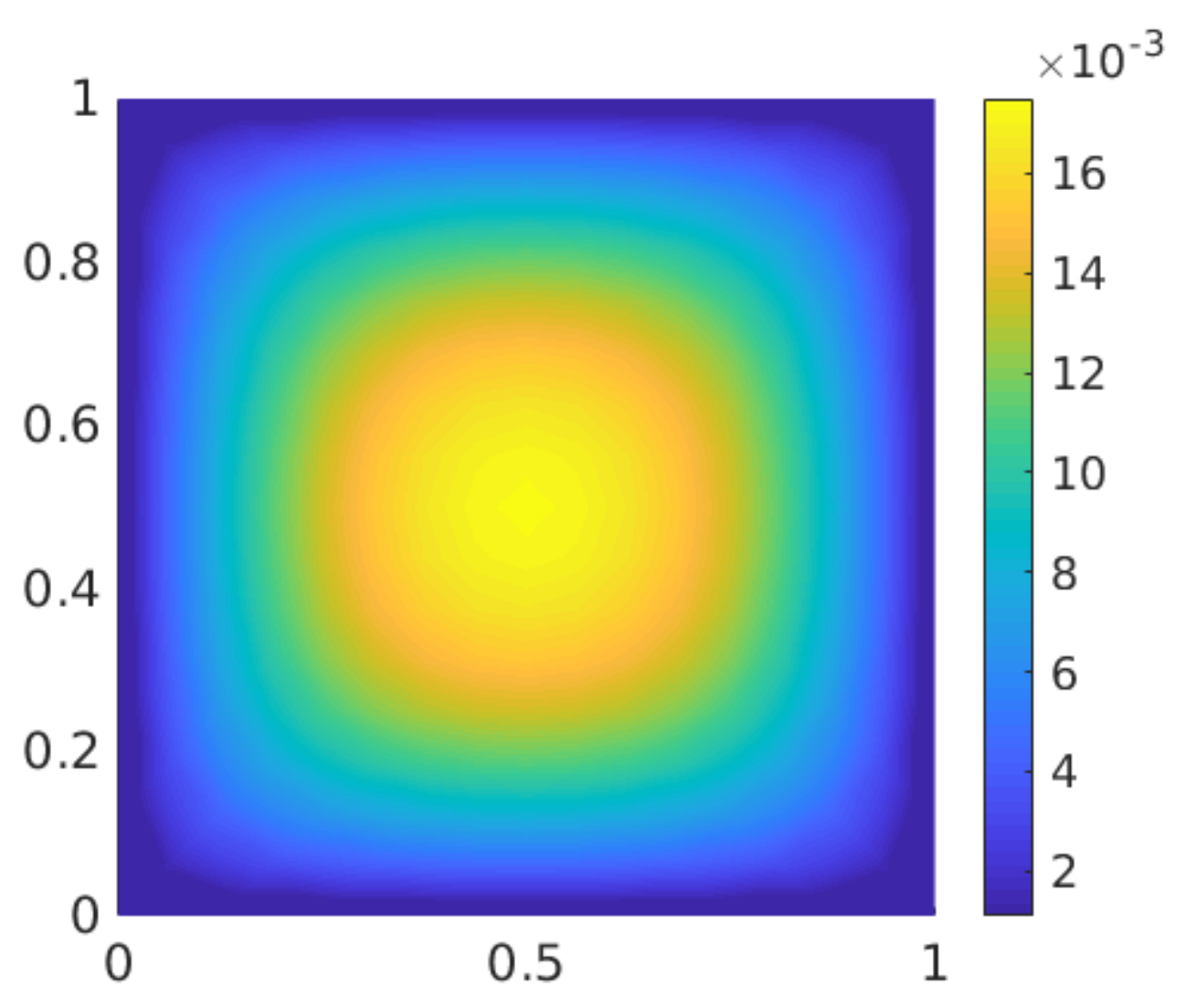}%
	\hspace{2ex}
	\includegraphics[width=0.3\textwidth, trim=40mm 95mm 38mm 90mm, clip=true, keepaspectratio=false]{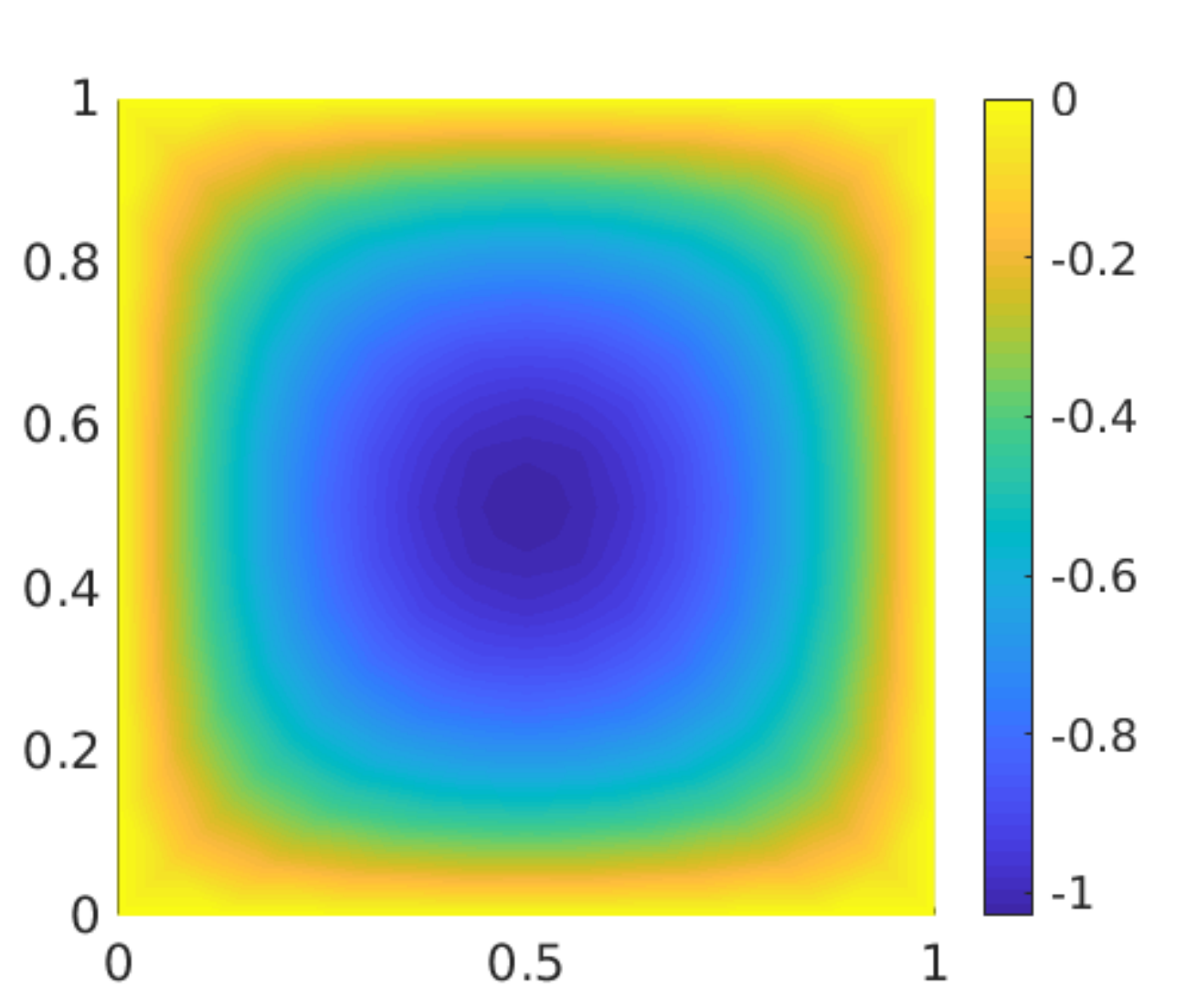}   
	\caption{LOD solution (left), macroscopic part of LOD solution (middle), and  FE solution (right) for $H=2^{-4}$ and $m=3$ in the experiment of Section \ref{subsec:multiscale}.}
	\label{fig:multiscale-sols}
\end{figure}

\section*{Conclusion}
We presented and analyzed a generalized finite element method in the spirit of the
Localized Orthogonal Decomposition for diffusion problems with sign-changing coefficients.
Standard finite element basis functions are modified by including local corrections.
The stability and the convergence of the method were analyzed under the assumption that
the contrast is ``sufficiently large''.
Our analysis involves a discrete $\UT$-coercivity argument, as well as
``symmetrized'' patches to compute the correctors associated with the elements
close to the sign-changing interface.
Numerical experiments illustrated the theoretically predicted optimal convergence rates.
Furthermore, they showed the applicability of the method for general coarse meshes, which do not
resolve the interface, and highly heterogeneous coefficients.

The numerical experiments also outlined some possible future research questions.
If the contrast is close to the critical interval, the patches for the corrector
computations need to be rather large. This contrast-dependency might be reduced with
the norm considered in  \cite{CV18signchangingapost}, where we mention the connection
with the LOD approach in weighted norms \cite{HM17lodcontrast,PS16lodcontrast}.

\section*{Acknowledgments}
Major parts of this work were carried out while BV was affiliated with University of Augsburg. BV's work at KIT is funded by the Deutsche Forschungsgemeinschaft (DFG, German Research Foundation) -- Project-ID 258734477 -- SFB 1173.
We are very grateful to the referees for their valuable comments that helped to improve the paper.

\bibliographystyle{abbrv}
\bibliography{references}

\appendix
\section{Technical results used in Section \ref{sec:intpol}}\label{sec:appendix}

In this section, we prove a few technical results used in Section \ref{sec:intpol} combining standard scaling arguments for classical FE functions.
Throughout the appendix, we use the notation introduced in Sections \ref{subsec:meshes} and \ref{sec:intpol}.
Classical finite element scaling arguments use the mapping of elements in the mesh $\CT_H$ onto the reference element. We use the standard notation of $\widehat \cdot$ for quantities (functions, constants, etc.) on the reference element. In particular, functions $\widehat v$ and $v$ are connected with each other via the standard reference element mapping.

\subsection{Key properties of the Oswald operator $I_H$}
\label{appendix_IH}

\begin{proof}[Proof of Lemma \ref{lem:estimatema}]
	Fix $\ba \in \CV_H$, $v \in H^1_0(\Omega)$, and recall the definition
	\begin{equation*}
	m^\ba(v) \eq \frac{1}{\sharp \ba} \sum_{K \in \CT_H^\ba} (P_K v)(\ba).
	\end{equation*}
	It is clear that
	\begin{equation*}
	|m^\ba(v)| \leq \frac{1}{\sharp \ba} \sum_{K \in \CT_H^\ba} \|P_K v\|_{0,\infty,K}
	\leq \max_{K \in \CT_H^\ba} \|P_K v\|_{0,\infty,K}
	=    \|P_{K_\star} v\|_{0,\infty,K_\star}
	\end{equation*}
	for some $K_\star \in \CT_H^\ba$. Then, since $w \eq P_{K_\star} v \in \CP_1(K_\star)$,
	the first estimate of \eqref{eq_inf_inv} shows that
	\begin{equation*}
	\|w\|_{0,\infty,K_\star}^2
	=
	\|\widehat w\|_{0,\infty,\widehat K}^2
	\leq
	\frac{\hC_{\mathrm inf}^2}{|\widehat K|^{2}} \|\widehat w\|_{0,\widehat K}^2
	=
	\frac{\hC_{\mathrm inf}^2}{|K_\star|^2}\|w\|_{0,K_\star}^2
	\leq
		\frac{\hC_{\mathrm inf}^2}{|K_\star|^2}\|v\|_{0,K_\star}^2,
	\end{equation*}
	from which \eqref{eq_estimate_ma} follows.
\end{proof}

\begin{proof}[Proof of Lemma \ref{lem:poincare}]
	
		Recall the notation for the reference patches introduced
		at Section \ref{section_reference_patch}.
		If $v \in H^1(\oa)$, then $\widehat v \eq v \circ \CF$ belongs to $H^1(\ho)$,
		and we have $m(v) = \widehat m(\widehat v)$, where
		\begin{equation*}
		\widehat m(\widehat v)
		\eq
		\frac{1}{\sharp \ba} \sum_{\widehat K \in \CF^{-1}(\CT_H^\ba)}
		(\CP_{\widehat K} \widehat v)(\boldsymbol 0).
		\end{equation*}
		
		Now, we observe that for $\widehat q \in \CP_0(\ho)$, $\widehat m(\widehat q) = 0$
		implies that $\widehat q = 0$. Then, a standard contradiction argument
		(see for instance \cite[proof of Theorem 3.1.1]{Ciarlet}) shows that there
		exists a constant $\hC$ such that
		\begin{equation*}
		\|\hw\|_{0,\ho} + |\hw|_{1,\ho}
		\leq
		\hC \left (|\widehat m(\hw)| + |\hw|_{1,\ho}\right ),
		\quad \forall \hw \in H^1(\ho)
		\end{equation*}
		justifying estimate \eqref{eq_poincare_tmp}.
		
		Hence, applying \eqref{eq_poincare_tmp}, we have
		\begin{equation*}
		\|\hv\|_{0,\ho} \leq \hC_{\mathrm P} |\hv|_{1,\ho}.
		\end{equation*}
		At this point, employing (element-wise) usual scaling arguments, we easily see that
		\begin{equation*}
		\|v\|_{0,\oa} \leq \max_{K \in \CT_H^\ba}
		\max_{\hK \in \widehat \CT} \sqrt{\frac{|K|}{|\hK|}} \|\hv\|_{0,\ho}
		\end{equation*}
		and
		\begin{equation*}
		|v|_{1,\oa} \leq \max_{K \in \CT_H^{\ba}} \max_{\hK \in \widehat \CT}
		\sqrt{\frac{|\hK|}{|K|}}\frac{h_{\hK}}{\rho_K} \|\hv\|_{0,\ho}.
		\end{equation*}
		All in all, recalling that $h_{\hK} \leq 1$, we obtain that
		\begin{equation*}
		\|v\|_{0,\oa}
		\leq
		\hC_{\mathrm P}
		\max_{\hK,\hK' \in \widehat \CT} \sqrt\frac{|\hK|}{|\hK'|}
		\max_{K,K' \in \CT_H^\ba} \sqrt\frac{|K|}{|K'|}
		\frac{1}{\rho_K} |v|_{1,\oa},
		\end{equation*}
		from which the result follows.
	
\end{proof}

\subsection{Construction of dual functions}
\label{appendix_eta}

The main aim of this appendix is to construct the function $\eta^\ba$ used for the definition of
$\UT_H$ and study its scaling. For the ensuing construction to hold, we need to assume that $\CT_H$
resolves the interface $\Gamma$. We emphasize, however, that no symmetry of the mesh is required.
Moreover, we believe that a similar result holds if the interface does not cut the elements
``too badly''. We refer to \cite{HMW19lodfracture} for a similar discussion in a different context.

\begin{lemma}\label{lem:refeta}
	For all $\widehat \lambda \in L^2(\widehat K)$, there exists a unique
	$\widehat \eta \in H^1_0(\widehat K) \cap \CP_{d+2}(\widehat K)$ such that
	\begin{equation}
	\label{eq_def_eta_ref}
	(\widehat \eta,\widehat v)_{\widehat K} = (\widehat \lambda,\widehat v)_{\widehat K}
	\quad
	\forall \widehat v \in \CP_1(\widehat K),
	\end{equation}
	and we have
	\begin{equation}
	\label{eq_estimate_eta_ref}
	|\widehat \eta|_{1,\widehat K} \leq \hC_{\mathrm norm}\hC_{\mathrm inv}\|\widehat \lambda\|_{0,\widehat K}
	\end{equation}
	In addition, the equality
	\begin{equation}
	\label{eq_def_eta}
	(\eta,v)_{K} = (\lambda,v)_{K}
	\quad
	\forall v \in \CP_1(K)
	\end{equation}
	and the estimate
	\begin{equation}
	\label{eq_estimate_eta}
	|\eta|_{1,K} \leq \frac{\hC_{\mathrm norm}\hC_{\mathrm inv}}{\rho_K} \|\lambda\|_{0,K}
	\end{equation}
	hold true. Moreover, whenever $\lambda \in \CP_1(K)$, we have
		\begin{equation}
		\label{eq_eta_proj}
		\lambda = P_K \eta.
		\end{equation}
\end{lemma}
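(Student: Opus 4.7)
My plan is to reduce the problem to a small finite-dimensional linear system on the reference simplex, and then transfer the result to an arbitrary $K$ by a standard change of variables.

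On $\widehat K$, I would parametrize candidates in $H^1_0(\widehat K) \cap \CP_{d+2}(\widehat K)$ by the ansatz $\widehat\eta = \widehat b\, \widehat q$ with $\widehat q \in \CP_1(\widehat K)$. This is admissible because $\widehat b$ vanishes on $\partial \widehat K$ and is of total degree $d+1$, so $\widehat b\, \widehat q \in H^1_0(\widehat K) \cap \CP_{d+2}(\widehat K)$. Substituting this ansatz, the defining relation $(\widehat\eta, \widehat v)_{\widehat K} = (\widehat\lambda, \widehat v)_{\widehat K}$ for $\widehat v \in \CP_1(\widehat K)$ becomes a square system on $\CP_1(\widehat K)$ governed by the symmetric bilinear form $(\widehat p, \widehat v) \mapsto (\widehat b\, \widehat p, \widehat v)_{\widehat K}$. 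This form is coercive on $\CP_1(\widehat K)$ by the very definition of $\hC_{\mathrm norm}$, since $(\widehat b\, \widehat q, \widehat q)_{\widehat K} = \|\widehat b^{1/2} \widehat q\|_{0,\widehat K}^2 \geq \hC_{\mathrm norm}^{-2} \|\widehat q\|_{0,\widehat K}^2$. A finite-dimensional Lax--Milgram argument then yields existence and uniqueness of $\widehat q$, and hence of $\widehat\eta$.

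For the seminorm bound, I would test the equation with $\widehat v = \widehat q$. Cauchy--Schwarz combined with the norm equivalence $\|\widehat q\|_{0,\widehat K} \leq \hC_{\mathrm norm} \|\widehat b^{1/2} \widehat q\|_{0,\widehat K}$ gives control of $\|\widehat q\|_{0,\widehat K}$ by $\|\widehat\lambda\|_{0,\widehat K}$, with constants expressed in terms of $\hC_{\mathrm norm}$. The inverse inequality defining $\hC_{\mathrm inv}$, applied to $\widehat q \in \CP_1(\widehat K)$, together with the pointwise bounds $\|\widehat b\|_{\infty,\widehat K} \leq 1$ and $\|\nabla \widehat b\|_{\infty,\widehat K} \leq 1$ on the reference element, then converts the $L^2$-control of $\widehat q$ into the desired bound on $|\widehat\eta|_{1,\widehat K} = |\widehat b\, \widehat q|_{1,\widehat K}$, producing a constant of the form $\hC_{\mathrm norm} \hC_{\mathrm inv}$.

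To transfer the result to a general $K$, I would use the affine reference map $\CF_K: \widehat K \to K$, under which $\widehat \lambda = \lambda \circ \CF_K$ and $\widehat \eta = \eta \circ \CF_K$. The determinant $|\det D\CF_K|$ appears as a common factor on both sides of the weak equation, so that the reference identity is equivalent to $(\eta, v)_K = (\lambda, v)_K$ for all $v \in \CP_1(K)$. The seminorm estimate follows by standard scaling: $L^2$-norms scale with the factor $|K|^{1/2}$, while gradients pick up an additional factor bounded by $\rho_K^{-1}$, which produces the $|K|^{1/2}/\rho_K$ dependence. Finally, the last assertion is immediate: when $\lambda \in \CP_1(K)$, the identity $(\eta, v)_K = (\lambda, v)_K$ for all $v \in \CP_1(K)$ is exactly the defining property of the $L^2$-projection of $\eta$ onto $\CP_1(K)$, and uniqueness of the projection yields $\lambda = P_K \eta$. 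There is no deep difficulty in this program; the only step demanding care is the bookkeeping of Jacobian factors during the pullback, so that the bilinear form and the seminorm both scale consistently into the advertised constants.
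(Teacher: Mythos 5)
Your proposal follows essentially the same route as the paper: the ansatz $\widehat \eta = \widehat b\,\widehat q$ with $\widehat q \in \CP_1(\widehat K)$, invertibility of the system via the norm equivalence encoded in $\hC_{\mathrm norm}$, testing with $\widehat q$, an inverse estimate, and affine scaling with cancelling Jacobian factors. The only (cosmetic) divergence is in the gradient bound: the paper estimates $\|\widehat \eta\|_{0,\widehat K} \leq \|\widehat b^{1/2}\widehat q\|_{0,\widehat K} \leq \hC_{\mathrm norm}\|\widehat \lambda\|_{0,\widehat K}$ directly from $0 \leq \widehat b \leq 1$ and then applies the inverse inequality to $\widehat\eta$ itself, whereas your product-rule argument needs the unjustified claim $\|\grad \widehat b\|_{\infty,\widehat K}\leq 1$ and, via $\|\widehat q\|_{0,\widehat K}\leq \hC_{\mathrm norm}^2\|\widehat\lambda\|_{0,\widehat K}$, yields a reference-element constant that is not literally $\hC_{\mathrm norm}\hC_{\mathrm inv}$ (harmless for the lemma's use, but not the advertised constant).
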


\begin{proof}
		Our proof rely on the bubble function $\widehat b$ defined in Section
		\ref{section_reference_element}.
	Let $\widehat \lambda \in L^2(\widehat K)$.
	There exists a unique $\widehat w \in \CP_1(\widehat K)$ such that
	\begin{equation*}
	(\widehat b \widehat w,\widehat v)_{\widehat K} = (\widehat \lambda,\widehat v)_{\widehat K}
	\quad
	\forall \widehat v \in \CP_1(\widehat K).
	\end{equation*}
	Then, one easily observes that
	$\widehat \eta \eq \widehat b \widehat w \in H^1_0(\widehat K) \cap \CP_{d+2}(\widehat K)$
	satisfies \eqref{eq_def_eta_ref}. Furthermore, picking the test function
	$\widehat v = \widehat w$ in the definition of $\widehat w$ and employing
	\eqref{eq_bubble_norm}, we have
	\begin{equation*}
	\|\widehat b^{1/2} \widehat w\|_{0,\widehat K}^2
	=
	(\widehat \lambda,\widehat w)_{\widehat K}
	\leq
	\|\widehat \lambda\|_{0,\widehat K}
	\|\widehat w\|_{0,\widehat K}
	\leq
	\hC_{\mathrm norm}
	\|\widehat \lambda\|_{0,\widehat K}
	\|\widehat b^{1/2} \widehat w\|_{0,\widehat K}
	\end{equation*}
	and \eqref{eq_estimate_eta_ref} follows recalling \eqref{eq_inf_inv} since
	\begin{equation*}
	\|\widehat \eta\|_{0,\widehat K}
	=
	\|\widehat b \widehat w\|_{0,\widehat K}
	\leq
	\|\widehat b^{1/2} \widehat w\|_{0,\widehat K}
	\leq
	\widehat C_{\mathrm{norm}}
	\|\widehat \lambda\|_{0,\widehat K},
	\end{equation*}
	and
	\begin{equation*}
	|\widehat \eta|_{1,\widehat K} \leq \hC_{\mathrm inv} \|\widehat \eta\|_{0,\widehat K},
	\end{equation*}
	as $\widehat \eta \in \CP_1(\widehat K)$.
	
	At this point \eqref{eq_def_eta} and \eqref{eq_estimate_eta} follows from usual
	scaling arguments, since $\widehat H = 1$, and
		\eqref{eq_eta_proj} is a direct consequence of \eqref{eq_def_eta}.
\end{proof}

\begin{proof}[Proof of Lemma \ref{lem:defetaa}]
	Let $\ba\in \CV_H^+\cup\CV_H^0$ be arbitrary but fixed. There exists an element
	$K_\star\in\CT_H$ such that $K_\star \subset \omega^\ba\cap\Op$. Following
	Lemma \ref{lem:refeta} we consider a function $\eta^\ba \in H^1_0(K_\star)$
	such that $P_{K_\star} \eta^\ba = \psi^\ba|_{K_\star}$. Then, we obtain for
	any $\ba^\prime\in \CV_H$ that
	\begin{equation*}
	m^{\ba'}(\eta^\ba)
	=
	\frac{1}{\sharp \ba'} \sum_{K \in \CT_H^{\ba'}} (P_K \eta^\ba)(\ba')
	=
	\frac{1}{\sharp \ba'} \psi^\ba(\ba')
	=
	\delta_{\ba,\ba'}.
	\end{equation*}
	
	On the other hand, using \eqref{eq_estimate_eta}, we have
	\begin{equation*}
	|\eta^\ba|_{1,\Omega_+}
	=
	|\eta^\ba|_{1,K_\star}
	\leq
	\frac{\hC_{\mathrm norm}\hC_{\mathrm inv}}{\rho_{K_\star}} \|\psi^\ba\|_{0,K_\star}
	\leq
	\hC_{\mathrm norm}\hC_{\mathrm inv}\frac{|K_\star|^{1/2}}{\rho_{K_\star}}.
	\end{equation*}
\end{proof}

\end{document}